\DeclareMathOperator{\add}{add}
\newcommand{\Ww}{\mathcal{W}}
\newcommand\ZZ{\mathbb{Z}}
\newcommand\XX{\mathbb{X}}
\newcommand\YY{\mathbb{Y}}
\newcommand\QQ{\mathbb{Q}}
\newcommand{\WW}{\mathbb{W}}
\newcommand\oQQ{\overline{\mathbb{Q}}}
\newcommand\rad{{\mathrm{rad}\,}}
\newcommand\rank{\operatorname{rk}}
\newcommand\REM[1]{}
\newcommand\Der{\operatorname{D^b}}
\newcommand\md{\operatorname{mod}}
\newcommand\coh{\operatorname{coh}}
\newcommand\ind{{\mathrm{ind}}}
\newcommand\Her{\mathcal{H}}
\newcommand\tHer{{\widetilde{\mathcal{H}}}}
\newcommand\Clu{\mathcal{C}}
\newcommand\dual{\operatorname{D}}
\newcommand\Ext{\operatorname{Ext}}
\newcommand\Aut{\operatorname{Aut}}
\newcommand\Pic{\operatorname{Pic}}
\newcommand\End{\operatorname{End}}
\newcommand\Hom{\operatorname{Hom}}
\newcommand\SL{\operatorname{SL}}
\newcommand\PSL{\operatorname{PSL}}
\newcommand\Groth{\operatorname{K_0}}
\newcommand\spitz[1]{\langle #1\rangle}
\newcommand{\Ii}{\mathcal{I}}
\newcommand{\ra}{\rightarrow}
\newcommand{\xra}[1]{\xrightarrow{#1}}
\newcommand{\Tt}{\mathcal{T}}
\newcommand{\Ff}{\mathcal{F}}
\newcommand{\boldlambda}{\text{\boldmath$\lambda$}}
\newcommand{\boldp}{\text{\boldmath$p$}}
\newcommand{\kel}{{\mathrm{kel}}}
\newcommand{\oT}{\overline{T}}
\newcommand{\graph}[1]{\Gamma_{#1}}
\newcommand{\comp}[1]{\langle #1\rangle}
\newcommand{\si}{\sigma}
\newcommand{\Oo}{\mathcal{O}}
\newcommand{\vx}{\vec{x}}
\newcommand{\vc}{\vec{c}}
\newcommand{\Ga}{\Gamma}
\newtheorem{Lemma}{Lemma}[section]
\newtheorem{Proposition}[Lemma]{Proposition}
\newtheorem{Theorem}[Lemma]{Theorem}
\newtheorem{Corollary}[Lemma]{Corollary}
\theoremstyle{definition}
\newtheorem{Remark}[Lemma]{Remark}
\newcommand{\proofend}{\hfill$\Box$\par}
\newcommand{\HVCenter}[1]{\setbox 0=\hbox{#1}%
        \dimen0=\wd0%
        \dimen1=\ht0%
        \divide\dimen0 by 2%
        \divide\dimen1 by 2%
        \hskip -\dimen0%
        \lower \dimen1%
        \box0%
        \hskip -\dimen0}
\newcommand{\HBCenter}[1]{\setbox 0=\hbox{#1}%
        \dimen0=\wd0%
        \dimen1=\ht0%
        \divide\dimen0 by 2%
        \hskip -\dimen0%
        \box0%
        \hskip -\dimen0}
\newcommand{\HTCenter}[1]{\setbox 0=\hbox{#1}%
        \dimen0=\wd0%
        \dimen1=\ht0%
        \divide\dimen0 by 2%
        \hskip -\dimen0%
        \lower \dimen1%
        \box0%
        \hskip -\dimen0}
\newcommand{\RVCenter}[1]{\setbox 0=\hbox{#1}%
        \dimen0=\wd0%
        \dimen1=\ht0%
        \divide\dimen1 by 2%
        \hskip -\dimen0%
        \lower \dimen1%
        \box0%
        \hskip -\dimen0}
\newcommand{\LVCenter}[1]{\setbox 0=\hbox{#1}%
        \dimen1=\ht0%
        \divide\dimen1 by 2%
        \lower \dimen1%
        \box0%
        \hskip -\dimen0}
\begin{document}

\title{The cluster category of a canonical algebra}
\author{M.~Barot}
\address{Instituto de Matem\'aticas\\
  Universidad Nacional Aut\'onoma de M\'exico\\
  Ciudad Universitaria, C.P.\ 04510\\
  Mexico}
\email{barot@matem.unam.mx}
\author{D.~Kussin}
\address{Institut f\"ur Mathematik\\
Universit\"at Paderborn\\
33095 Paderborn\\
Germany}
\email{dirk@math.uni-paderborn.de}
\author{H.~Lenzing}
\address{Institut f\"ur Mathematik\\
Universit\"at Paderborn\\
33095 Paderborn\\
Germany}
\email{helmut@math.uni-paderborn.de}

\subjclass[2000]{16G20, 18E30}

\begin{abstract}
  We study the cluster category of a canonical algebra $A$ in terms of
  the hereditary category of coherent sheaves over the corresponding
  weighted projective line $\XX$. As an application we determine the
  automorphism group of the cluster category and show that the
  cluster-tilting objects form a cluster structure in the sense of
  Buan-Iyama-Reiten-Scott. The tilting graph of the sheaf category
  always coincides with the tilting or exchange graph of the cluster
  category. We show that this graph is connected if the Euler
  characteristic of $\XX$ is non-negative, or equivalently, if $A$ is
  of tame (domestic or tubular) representation type.
\end{abstract}
\maketitle
\sloppy

\section{Introduction}
Cluster categories $\Clu(H)$ of a hereditary algebra $H$ were
introduced in \cite{5clu} as orbit categories $\Der(H)/F^\ZZ$ of the
derived category of $H$, where $F=\tau^-\circ [1]$. These categories
have been extensively studied due to their remarkable connections to
cluster algebras in the sense of Fomin-Zelevinsky~\cite{FZ1}. In this
paper we study the cluster category $\Clu(A)$ of a canonical algebra
$A$.  It was shown in~\cite{Ke} that also in this case $\Clu(A)$ is a
triangulated category. For information on canonical algebras we refer
to~\cite{GeLe,Ri}.

We give an important alternative description of the cluster category
$\Clu=\Clu(A)$ in terms of the hereditary category $\Her=\coh(\XX)$ of
coherent sheaves on the weighted projective line $\XX$ attached to
$A$, namely $\Clu$ can be obtained from $\Her$ by ``adding extra
morphisms''.  Conversely, $\Her$ can be recovered from $\Clu$ as the
quotient $\Clu/\Ii$ by a suitable ideal, see
Corollary~\ref{cor:recover}. Note that our setting includes the
cluster categories of tame hereditary algebras.

The cluster categories studied here provide a particular family of
$2$-Calabi-Yau triangulated categories~\cite{Ke} having a cluster
structure in the sense of \cite{BIRS}, see Theorem~\ref{thm:tub-ca}.
These cluster categories give rise to further cluster-tilted algebras.
Note that cluster categories associated to hereditary abelian
categories that are not module categories have also been considered
in~\cite{5clu} and~\cite{Zhu}.

Another main result is a lifting property which states that exact
autoequivalences of the cluster category can be lifted to exact
autoequivalences of $\Der(\Her)$, see Theorem~\ref{thm:lifting}. For
this a detailed study of cluster tubes is crucial. We show that the
automorphism group $\Aut(\Clu)$ is canonically isomorphic to
$\Aut(\Der(\Her))$ modulo the cyclic subgroup generated by $F$, see
Theorem~\ref{Thm:Aut1}.  If $A$ is not tubular, then $\Aut(\Clu)$ is
canonically isomorphic to $\Aut(\Her)$.  In the tubular case we
establish a natural bijection between the coset space
$\Aut(\Clu)/\Aut(\Her)$ and $\QQ\cup\{\infty\}$, see
Theorem~\ref{thm:AutC-AutH}.

As another application the investigation of the automorphism group in
the tubular case yields -- combined with results of
H\"{u}bner~\cite{Huebner} -- the connectedness of the tilting graph in
the tubular case.

\section{Basic Results}
\label{sec:Basics}

\noindent
Throughout this article, $k$ will denote an algebraically closed
field.

\subsection*{Definition of the cluster category}

Let $A$ be a canonical algebra of weight type $\boldp=(p_1,\ldots,p_t)$ and
parameter sequence $\boldlambda=(\lambda_3,\ldots,\lambda_t)$, see
\cite{Ri,GeLe}. We denote by $\md(A)$ the category of finitely
generated (right) $A$-modules and by $\Der(A)$ the bounded derived
category of $\md(A)$. Recall that the Auslander-Reiten translation
$\tau$, its inverse $\tau^-$ and the shift $[1]$ are autoequivalences of
$\Der(A)$.

By~\cite{GeLe}, $\Der(A)$ is equivalent to $\Der(\Her)$, where
$\Her=\coh(\XX)$ is the category of coherent sheaves on a weighted
projective line $\XX$ of type $(\boldp,\boldlambda)$.  Note that
$\Her$ is a connected abelian hereditary category with Serre duality
$\dual \Ext^1_{\Her}(X,Y)=\Hom_\Her(Y,\tau X)$ for all $X,\,Y\in\Her$,
where $\tau$ is an autoequivalence of $\Her$.  Further $\Her$ has a
tilting object $T$ such that $\End_\Her (T)=A$.

We study the \emph{cluster category} $\Clu=\Clu(\Her)$, defined as the
\emph{orbit category} $\Der(\Her)/F^\ZZ$, where $F=\tau^-\circ [1]$,
see \cite{5clu}; it has the same objects as $\Der(\Her)$, morphism
spaces are given by $\bigoplus_{n\in\ZZ}\Hom_{\Der(\Her)}(X,F^nY)$
with the obvious composition. We denote by $\pi\colon
\Der(\Her)\rightarrow\Clu$ the natural projection functor and call a
triangulated structure on $\Clu$ \emph{admissible} if $\pi$ becomes an
exact functor; the shift in $\Clu$ is then given by $\tau$.
By~\cite{Ke} such an admissible triangulated structure always exists.

\subsection*{Description of $\Clu$ in terms of $\Her$}

Let $\tHer$ be the category with the same objects as $\Her$ and
$\ZZ_2$-graded morphism spaces
$\Hom_{\tHer}(X,Y)=\Hom_\Her(X,Y)\oplus\Ext_\Her^1(X,\tau^- Y)$. Then
each morphism $f\in\Hom_{\tHer}(X,Y)$ can be written as $f=f_0+f_1$
for some $f_0\in\Hom_\Her(X,Y)$ of degree zero and some
$f_1\in\Ext_\Her^1(X,\tau^- Y)$ of degree one. The composition is
given by $(g_0+g_1)\circ (f_0+f_1)=g_0 f_0+(\tau^{-} g_0 f_1+g_1f_0)$
using the Yoneda product.  Since $\tau^-$ is an autoequivalence of
$\Her$ we obtain the following result.

\begin{Proposition}\label{prop:comp_in_tHer}
  The category $\Clu$ is equivalent to the category $\tHer$.
  \proofend
\end{Proposition}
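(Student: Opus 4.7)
The plan is to construct an equivalence $\Phi\colon\tHer\to\Clu$ that is the identity on objects, viewing $\Her\hookrightarrow\Der(\Her)\xra{\pi}\Clu$. Essential surjectivity is the easy half: since $\Her$ is hereditary, every indecomposable object of $\Der(\Her)$ is a shift $X[n]$ with $X\in\Her$; the identity $F^n(\tau^n X)=\tau^{-n}(\tau^n X)[n]=X[n]$ places $X[n]$ in the same $F$-orbit as $\tau^n X\in\Her$, so $\pi$ already hits every isomorphism class when restricted to $\Her$.

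For the morphism spaces I would compute, for $X,Y\in\Her$,
\[
\Hom_\Clu(X,Y)=\bigoplus_{n\in\ZZ}\Hom_{\Der(\Her)}(X,F^n Y)=\bigoplus_{n\in\ZZ}\Hom_{\Der(\Her)}(X,\tau^{-n}Y[n]).
\]
Because $\Her$ is hereditary, $\Hom_{\Der(\Her)}(X,W[n])$ vanishes for $n\notin\{0,1\}$ whenever $X,W\in\Her$. Hence only $n=0$ and $n=1$ survive, contributing $\Hom_\Her(X,Y)$ and $\Ext^1_\Her(X,\tau^- Y)$ respectively, which is precisely the $\ZZ_2$-graded space $\Hom_{\tHer}(X,Y)$.

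It remains to verify that the orbit-category composition matches the formula defining $\tHer$. Writing $f=f_0+f_1$ and $g=g_0+g_1$, the orbit category prescribes $(g\circ f)_n=\sum_{i+j=n}F^i(g_j)\circ f_i$. All indices $n\ge 2$ and $n\le -1$ vanish by heredity (the relevant $\Hom_{\Der(\Her)}$-groups are zero), leaving the degree-$0$ part $g_0 f_0$ and the degree-$1$ part $g_1 f_0+F(g_0)\circ f_1$. Using $F(g_0)=\tau^{-}(g_0)[1]$, the composition $F(g_0)\circ f_1$ with $f_1\in\Hom_{\Der(\Her)}(X,\tau^- Y[1])$ lands in $\Hom_{\Der(\Her)}(X,\tau^- Z[1])=\Ext^1_\Her(X,\tau^- Z)$, as required.

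The main (minor) obstacle is precisely this last identification: to check that the derived-category composition $F(g_0)\circ f_1$ coincides with the Yoneda product $\tau^- g_0\cdot f_1$ appearing in the stated composition law of $\tHer$. This follows from the standard interpretation of the Yoneda product as composition of (co-)chain maps between injective resolutions, which in the hereditary case reduces to a single $\Hom$-and-shift computation. Once this is in place, $\Phi$ is clearly $k$-linear, fully faithful, and essentially surjective, so it is an equivalence.
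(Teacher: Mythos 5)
Your argument is correct and is exactly the computation the paper leaves implicit (the paper states the proposition with no written proof beyond the remark that $\tau^-$ is an autoequivalence): identify the orbit-category Hom spaces $\bigoplus_n\Hom_{\Der(\Her)}(X,F^nY)$ for $X,Y\in\Her$, use heredity to kill all degrees except $n=0,1$, check the composition matches the stated $\ZZ_2$-graded rule via the standard identification of derived-category composition with the Yoneda product, and get essential surjectivity from the fact that every object of $\Der(\Her)$ is a sum of shifts of objects of $\Her$, each $F$-equivalent to an object of $\Her$. No objection.
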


In the sequel we shall identify $\Clu$ with $\tHer$, that is, we view
$\Clu$ as the category $\Her$ with extra morphisms of degree one.
Fixing an admissible triangulated structure, $\Clu$ becomes a
$2$-Calabi-Yau triangulated category, see \cite{Ke}.  We call a
triangle in $\Clu$ \emph{induced} if it is isomorphic to the image of
an exact triangle in $\Der(\Her)$ under the projection functor.

\begin{Remark}\label{Remark:factorIdeal}
  In $\Clu$ the composition of two morphisms of degree one is zero,
  hence the degree one morphisms form an ideal $\Ii$ contained in the
  radical $\rad_\Clu$ of $\Clu$. Clearly $\Her\simeq \Clu/\Ii$, that
  is, $\Her$ can be recovered from $\Clu$ provided that the above
  $\ZZ_2$-grading is known. We shall later see how the $\ZZ_2$-grading
  can be obtained intrinsically in terms of the category $\Clu$, see
  Corollary~\ref{cor:recover}.
\end{Remark}

\noindent
For the notion of a \emph{cluster-tilting object} in $\Clu$ we refer
to \cite{5clu}.

\begin{Proposition}\label{prop:CandH}
  The cluster category $\Clu$ is a Krull-Remak-Schmidt category. The
  categories $\Clu$ and $\Her$ have the same indecomposables, the same
  isomorphism classes of objects and the tilting objects in $\Her$ are
  precisely the cluster-tilting objects in $\Clu$.
\end{Proposition}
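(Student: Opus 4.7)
\smallskip
\noindent\emph{Proof plan.} All four claims will be deduced from Remark~\ref{Remark:factorIdeal}, which provides $\Her\simeq\Clu/\Ii$ with $\Ii\subseteq\rad_\Clu$, together with the identification $\Clu\simeq\tHer$ from Proposition~\ref{prop:comp_in_tHer} and the Krull-Remak-Schmidt property of $\Her=\coh(\XX)$.

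For the first three statements, the key observation is that for every $X\in\Clu$ one has $\End_\Clu(X)=\End_\Her(X)\oplus\Ext^1_\Her(X,\tau^- X)$, with the second summand lying in $\Ii\subseteq\rad_\Clu$. Hence $\End_\Clu(X)$ is local if and only if $\End_\Her(X)$ is local, which simultaneously yields the Krull-Remak-Schmidt property of $\Clu$ and identifies the indecomposables of $\Clu$ with those of $\Her$. The same radical containment shows that a morphism $f=f_0+f_1$ in $\Clu$ is invertible if and only if its degree-zero part $f_0$ is invertible in $\Her$, because $f_0^{-1}f_1\in\rad_\Clu$ is then quasi-regular; in particular two objects are isomorphic in $\Clu$ if and only if they are isomorphic in $\Her$.

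For the last assertion, I will use that the shift of $\Clu$ is $\tau$ to compute, inside $\tHer$,
\[
\Ext^1_\Clu(X,Y)=\Hom_\Clu(X,\tau Y)=\Hom_\Her(X,\tau Y)\oplus\Ext^1_\Her(X,Y),
\]
and then apply Serre duality $\Hom_\Her(X,\tau Y)\cong\dual\Ext^1_\Her(Y,X)$ to obtain
\[
\Ext^1_\Clu(X,Y)\cong\Ext^1_\Her(X,Y)\oplus\dual\Ext^1_\Her(Y,X).
\]
Specialising to $X=Y=T$ gives $\Ext^1_\Clu(T,T)=0\Leftrightarrow\Ext^1_\Her(T,T)=0$. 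If $T$ is tilting in $\Her$ and $X$ is an indecomposable with $\Ext^1_\Clu(T\oplus X,T\oplus X)=0$, then the displayed formula forces $X$ to be Ext-perpendicular to $T$ from both sides in $\Her$, hence $X\in\add T$ by the standard maximality of tilting objects in a connected hereditary abelian category with a tilting object; thus $T$ is cluster-tilting in $\Clu$. Conversely, a cluster-tilting $T$ is $\Ext^1_\Her$-rigid, and its count of pairwise non-isomorphic indecomposable summands equals the rank of $\Groth(\Her)$, which forces $T$ to be a tilting object in $\Her$.

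\noindent\emph{Main obstacle.} The Ext-calculation and the formal passage from $\Ii\subseteq\rad_\Clu$ are routine. The delicate point is the final converse: one must know that a basic cluster-tilting object in $\Clu$ has the ``correct'' number of indecomposable summands to be tilting in $\Her$. This rests on the general fact, for $2$-Calabi-Yau categories coming from hereditary categories with a tilting object, that basic maximal $\Ext^1_\Clu$-rigid objects have as many summands as basic tilting objects in $\Her$; this is the only step that goes beyond the ideal-theoretic bookkeeping carried out earlier.
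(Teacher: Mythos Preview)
Your argument for the first three claims (invertibility of $f=f_0+f_1$ via $f_1\in\Ii\subseteq\rad_\Clu$, locality of endomorphism rings, and hence the Krull--Remak--Schmidt property) is exactly what the paper does, in almost the same words.

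For the final claim the paper simply writes ``follows from the definitions,'' whereas you spell out the computation $\Ext^1_\Clu(X,Y)\cong\Ext^1_\Her(X,Y)\oplus\dual\Ext^1_\Her(Y,X)$ via Serre duality. Your forward direction is fine, but you make the converse harder than it needs to be. The displayed formula is valid for \emph{any} pair $X,Y$, not just $X=Y=T$; hence $T\oplus X$ is rigid in $\Clu$ if and only if it is rigid in $\Her$, and therefore $T$ is \emph{maximal} rigid in $\Clu$ if and only if it is maximal rigid in $\Her$. Since in $\coh(\XX)$ maximal rigid coincides with tilting---this is precisely the ``standard maximality of tilting objects'' you already invoked in the forward direction---the converse follows symmetrically. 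Your counting-of-summands argument and the appeal to general $2$-Calabi-Yau theory are therefore unnecessary; the ``main obstacle'' you flag dissolves once you notice that the equivalence of rigidity transfers maximality directly.
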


\begin{proof}
  We first observe that a morphism $f=f_0+f_1$ is invertible in $\Clu$
  if and only if $f_0$ is invertible in $\Her$ since $f_1$ is radical.
  Therefore isomorphism classes coincide in both categories.  Since
  $\End_\Clu(X)/\rad_\Clu(X,X)=\End_\Her(X)/\rad_\Her(X,X)$, the
  categories $\Clu$ and $\Her$ have the same indecomposables and
  $\Clu$ is a Krull-Remak-Schmidt category.  The last assertion
  follows from the definitions.
\end{proof}

\noindent
The following result can be proved as in \cite[Prop.~1.3]{5clu}.

\begin{Proposition}\label{prop:AR-structure}
  The cluster category $\Clu$ has Auslander-Reiten triangles which
  coincide with the triangles induced by almost-split sequences in
  $\Her$. Moreover, $\Her$ and $\Clu$ have the same Auslander-Reiten
  quiver.\proofend
\end{Proposition}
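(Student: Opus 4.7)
The plan is to adapt the argument of [BMRRT, Prop.~1.3] to the present setting, using the explicit model $\Clu = \tHer$ furnished by Proposition~\ref{prop:comp_in_tHer}. Fix an indecomposable $Z \in \Her$. Since $\Her = \coh(\XX)$ is hereditary with Serre duality and $\tau$ is an exact autoequivalence, there is an almost split sequence $0 \to \tau Z \to Y \xrightarrow{p} Z \to 0$ in $\Her$; this lifts to an exact triangle in $\Der(\Her)$ and descends through $\pi$ to a triangle $\tau Z \to Y \to Z \to \tau^2 Z$ in $\Clu$, where we have used the identification of the shift with $\tau$. The goal is to verify that this is an Auslander--Reiten triangle in $\Clu$, and that every such triangle arises this way.

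First I would check, directly from the composition rule in $\tHer$, that a morphism $f = f_0 + f_1 \in \Hom_\Clu(W,Z)$ is a retraction in $\Clu$ if and only if $f_0$ is a retraction in $\Her$: one direction is visible on the degree-zero part of $f\circ g = 1_Z$, and the other uses that the morphism $\tau^- f_0$ induces a surjection on $\Ext^1$-groups, allowing the degree-one mismatch to be absorbed. Granted this, a non-retraction $f: W \to Z$ in $\Clu$ has $f_0$ a non-retraction in $\Her$, so by the AR property in $\Her$ we find $h_0 : W \to Y$ with $p \circ h_0 = f_0$.

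The decisive step is lifting the degree-one part $f_1 \in \Ext^1_\Her(W,\tau^- Z)$. Applying the exact autoequivalence $\tau^-$ to the almost split sequence gives a short exact sequence $0 \to Z \to \tau^- Y \to \tau^- Z \to 0$ in $\Her$; the long exact sequence of $\Ext^\ast_\Her(W,-)$ together with $\Ext^2_\Her = 0$ then yields surjectivity of $\Ext^1_\Her(W,\tau^- Y) \to \Ext^1_\Her(W,\tau^- Z)$, producing $h_1$ with $\tau^- p \cdot h_1 = f_1$. Setting $h = h_0 + h_1$ one verifies $p \circ h = f$ in $\Clu$, which establishes the factorization property. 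Since the connecting map $Z \to \tau^2 Z$ has degree-one component equal to the (nonzero) extension class of the original AR sequence, it is nonzero, and the indecomposability of $\tau Z$ and $Z$ completes the verification of the AR-triangle axioms. Finally, the AR-quiver statement is immediate: both categories have the same indecomposables by Proposition~\ref{prop:CandH}, and the arrows together with their multiplicities are read off from the indecomposable decomposition of the common middle term $Y$. The main obstacle is the degree-one lifting; it hinges precisely on hereditarity of $\Her$ and on $\tau^-$ being an exact autoequivalence, both of which are at our disposal because $\Her = \coh(\XX)$.
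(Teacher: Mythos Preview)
Your proposal is correct and follows precisely the approach the paper indicates: the paper gives no argument of its own but simply refers to \cite[Prop.~1.3]{5clu}, and you have carried out that adaptation in detail, using the $\ZZ_2$-graded model $\tHer$ and the vanishing of $\Ext^2_\Her$ to handle the degree-one lifting. There is nothing to add.
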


\begin{Corollary}
  For objects $X,Y$ of\,\ $\Clu$ the space $\Hom_\Clu(X,Y)_1$ of
  degree one morphisms is contained in the infinite radical
  $\rad_\Clu^\infty(X,Y)$.
\end{Corollary}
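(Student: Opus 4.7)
The plan is to show that every degree one morphism $f_1\in\Ext^1_\Her(X,\tau^- Y)$ lies in $\rad^n_\Clu(X,Y)$ for all $n\ge 1$, and hence in the infinite radical. By bilinearity, I would reduce immediately to the case where $Y$ is indecomposable, so that $\tau^- Y$ is indecomposable as well. The base case $n=1$ is already contained in Remark~\ref{Remark:factorIdeal}, which observes that the ideal $\Ii$ of degree one morphisms is contained in $\rad_\Clu$.

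The induction step rests on producing a factorization $f_1=(\tau p)\circ \tilde f$ in $\Clu$ in which $\tau p$ is a radical degree zero morphism and $\tilde f$ is itself a degree one morphism to which the inductive hypothesis applies. To build it, I would take the Auslander--Reiten sequence ending at $\tau^- Y$ in $\Her$ (which exists because $\Her=\coh(\XX)$ has Serre duality),
\begin{equation*}
0\to Y\to E\xra{p}\tau^- Y\to 0,
\end{equation*}
and apply $\Hom_\Her(X,-)$. Since $\Her$ is hereditary, $\Ext^2_\Her=0$, so the induced map $p_*\colon\Ext^1_\Her(X,E)\to\Ext^1_\Her(X,\tau^- Y)$ is surjective. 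I would then lift $f_1$ to some $\tilde f\in\Ext^1_\Her(X,E)=\Ext^1_\Her(X,\tau^-(\tau E))$; this is precisely a degree one morphism $X\to \tau E$ in $\Clu$. A direct computation with the composition formula in $\tHer$ gives
\begin{equation*}
(\tau p)\circ \tilde f\;=\;\tau^-(\tau p)\cdot \tilde f\;=\;p\cdot\tilde f\;=\;p_*(\tilde f)\;=\;f_1,
\end{equation*}
while $\tau p$ is radical in $\Her$ (being a morphism in an Auslander--Reiten sequence), hence radical in $\Clu$.

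To close the induction I would decompose $E=\bigoplus E_i$ into indecomposable summands, split $\tilde f=\sum \tilde f_i$ with $\tilde f_i\colon X\to \tau E_i$ a degree one morphism between objects of which the target is indecomposable, and invoke the inductive hypothesis on each $\tilde f_i$ to conclude $\tilde f\in\rad^{n-1}_\Clu$, whence $f_1\in\rad^n_\Clu$.

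The main obstacle is a conceptual one rather than a technical one: one must verify that the ``lifted'' morphism produced from the hereditary surjectivity of $p_*$ really is a degree one morphism in $\Clu$ with indecomposable-controlled target, so that the induction feeds back into itself. This requires the bookkeeping of the $\ZZ_2$-grading and the identification $\Ext^1_\Her(X,E)\cong\Hom_\Clu(X,\tau E)_1$, which relies essentially on $\tau$ being an autoequivalence of $\Her$; once these identifications are in place, the composition formula and the decomposition into indecomposable summands make the inductive step routine.
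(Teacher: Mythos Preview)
Your proof is correct, but it takes a genuinely different route from the paper's.

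The paper argues in a single line via the derived category: identifying $\Hom_\Clu(X,Y)_1$ with $\Hom_{\Der(\Her)}(X,FY)$, it observes that $X$ and $FY$ lie in distinct Auslander--Reiten components of $\Der(\Her)$ (one in $\Her$, the other in $\Her[1]$), so this Hom-space is already contained in $\rad^\infty_{\Der(\Her)}(X,FY)$; the conclusion then follows from the fact that the projection $\pi$ sends radical morphisms to radical morphisms (implicit in Proposition~\ref{prop:AR-structure}).

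Your argument, by contrast, never leaves $\Her$ and $\tHer$: you peel off one radical factor at a time by lifting $f_1$ along the right almost split map $p$ in the Auslander--Reiten sequence ending at $\tau^-Y$, using only the hereditary property and the composition rule in $\tHer$. This is more hands-on and arguably more elementary, since it avoids invoking the Auslander--Reiten theory of $\Der(\Her)$ and the radical-preservation of $\pi$. The trade-off is length: the paper's proof dispatches the statement in two sentences by exploiting the global picture in $\Der(\Her)$, whereas your induction requires the bookkeeping you describe (the grading identifications and the decomposition into indecomposable summands). Both arguments ultimately rest on the same underlying fact, namely that $\Her$ has almost split sequences with $\tau$ an autoequivalence; yours makes that dependence completely explicit.
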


\begin{proof}
  Since $X$ and $FY$ belong to distinct Auslander-Reiten components in
  $\Clu$ we have $\Hom_\Clu(X,Y)_1=\Hom_{\Der(\Her)}(X,FY)\subseteq
  \rad^\infty_{\Der(\Her)}(X,FY)$, hence the result.
\end{proof}

\begin{Remark}
  There are further hereditary categories allowing a treatment by the
  techniques of this paper. If $H$ is a connected hereditary algebra
  of infinite representation type, then $\Der(H)=\Der(\Her)$ for some
  hereditary category $\Her$ for which $\tau$ is an autoequivalence
  (take $\Her=\mathcal{I}[-1]\vee \mathcal{P}\vee \mathcal{R}$, where
  $\mathcal{P}$, $\mathcal{I}$ are the preprojective, respectively
  preinjective component and $\mathcal{R}$ consists of all regular
  components of $\md(H)$). The tame hereditary case is covered by our
  setting.
\end{Remark}

\subsection*{Shape of the cluster category}

Denote by $\Her_0$ (resp.\ $\Her_+$) the full subcategory of $\Her$
consisting of the objects of finite length (resp.\ the vector
bundles). We define $\Clu_0$ (resp.\ $C_+$) as the full subcategory of
$\Clu$ given by the objects of $\Her_0$ (resp.\ of $\Her_+$). A full
subcategory of $\Clu$ given by the objects of a tube in $\Her$ will be
called \emph{cluster tube}.

The \emph{slope} function $\mu$ assigns to each non-zero object $X$ of
$\Clu$ an element of $\oQQ=\QQ\cup\{\infty\}$ by
$\mu(X)=\deg(X)/\rank(X)$, where $\deg$ and $\rank$ are the degree and
the rank functions on $\Her$, respectively, see~\cite{GeLe}. Recall
that the \emph{Euler characteristic} $\chi_\Her=2-\sum_{i=1}^t
(1-1/p_i)$ determines the representation type: if $\chi_\Her> 0$
(resp.\ $=0,<0$) then $\Her$ is tame domestic (resp.\ tubular, wild),
see~\cite{GeLe}.

In the tubular case for each $q\in\oQQ$, we denote by $\Clu^{(q)}$ the
full subcategory of $\Clu$ given by the additive closure of all
indecomposables of slope $q$. In particular we have
$\Clu^{(\infty)}=\Clu_0$.  For $q<q'$ (resp. $q>q'$) each non-zero
morphism from $\Clu^{(q)}$ to $\Clu^{(q')}$ is of degree zero (resp.
one).

By definition a \emph{tubular family} in $\Clu$ is a maximal family of
pairwise orthogonal cluster tubes. If $\chi_\Her\neq 0$ then $\Clu_0$
is the unique tubular family in $\Clu$.  If $\Her$ is tubular then the
categories $\Clu^{(q)}$ (for $q\in\oQQ$) are precisely the tubular
families, see~\cite{LeMe2}.

\section{Relationship to Fomin-Zelevinsky mutations}

\subsection*{Cluster structure}

By Proposition~\ref{prop:CandH} the cluster-tilting objects in $\Clu$
correspond to the tilting objects in $\Her$.  For a cluster-tilting
object $T$ we denote by $Q_T$ the quiver of the endomorphism algebra
$\End_\Clu(T)$. We call an object $E\in\Her$ \emph{exceptional\/}
  if it is indecomposable with $\Ext^1_\Her (E,E)=0$.

We know from \cite{Huebner} that in $\Her$ for each indecomposable
direct summand $M$ of $T$, that is, $T=\oT\oplus M$ there exists a
unique exceptional object $M^\ast\not\simeq M$ in $\Her$ such that
$T^\ast=\oT\oplus M^\ast$ is again a tilting object.

A $2$-Calabi-Yau triangulated category $\Clu$ with finite dimensional
Hom-spaces \emph{admits a cluster structure}~\cite{BIRS} if (i) for
each cluster-tilting object $T$ the quiver $Q_T$ has no loop and no
$2$-cycle, and (ii) if $T=\overline{T}\oplus M$ and
$T^\ast=\overline{T}\oplus M^\ast$ are two cluster-tilted objects with
non-isomorphic indecomposables $M$ and $M^\ast$, then $Q_{T^\ast}$ is
the Fomin-Zelevinsky mutation \cite{FZ1} of $Q_T$ in the vertex
corresponding to $M$.

\begin{Theorem}
  \label{thm:tub-ca}
  For any canonical algebra the category $\Clu$ admits a cluster
  structure.
\end{Theorem}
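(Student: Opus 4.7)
The plan is to verify the two defining conditions of a cluster structure recalled just above. By Proposition~\ref{prop:CandH} the cluster-tilting objects of $\Clu$ are precisely the tilting objects of $\Her$, and by H\"ubner's result every indecomposable summand $M$ of a tilting object $T = \oT \oplus M$ admits a unique exchange partner $M^\ast$. Because $\Her$ is hereditary, the pair $(M, M^\ast)$ fits into two short exact sequences
\[
0 \to M \to B \to M^\ast \to 0, \qquad 0 \to M^\ast \to B' \to M \to 0
\]
in $\Her$, with $B,B' \in \add \oT$ minimal $\add\oT$-approximations. Applying the projection $\pi\colon \Der(\Her) \to \Clu$ converts these into a pair of induced exchange triangles in $\Clu$ between $M$ and $M^\ast$; in each triangle one of the three morphisms has degree one in the sense of Proposition~\ref{prop:comp_in_tHer}.

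For condition (i) I would treat the absence of loops and the absence of $2$-cycles separately. Each summand $T_i$ of $T$ is exceptional in $\Her$, so $\End_\Her(T_i) = k$ and the entire radical of $\End_\Clu(T_i) = k \oplus \Ext^1_\Her(T_i, \tau^- T_i)$ is concentrated in degree one. To rule out loops I would show that every class $\xi$ in $\Ext^1_\Her(T_i, \tau^- T_i)$ factors in $\Clu$ through the sum of the remaining summands of $T$, using the approximation $T_i \to B$ coming from the exchange sequence attached to $M = T_i$; the hypothesis $\Ext^1_\Her(T, T) = 0$ is what enables this factorization. For the degree-zero piece of $Q_T$, the absence of $2$-cycles is a standard consequence of Serre duality together with the tilting property, and any mixed-degree $2$-cycle involving the ideal $\Ii$ is again absorbed through the same exchange approximation, hence vanishes modulo the square of the Jacobson radical of $\End_\Clu(T)$.

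Condition (ii) should then follow from the general Iyama--Yoshino mutation formula for Hom-finite $2$-Calabi-Yau triangulated categories: once loops and $2$-cycles have been ruled out, counting irreducible morphisms into and out of $M$ via the middle terms $B$ and $B'$ of the exchange triangles reproduces the Fomin--Zelevinsky quiver mutation at the vertex corresponding to $M$, identifying $Q_{T^\ast}$ with the mutation of $Q_T$. The main obstacle is the no-loops part of (i) when $T_i$ lies inside a tube of $\Her$: Serre duality can make $\Ext^1_\Her(T_i, \tau^- T_i)$ genuinely non-zero, for instance when $T_i$ is a regular exceptional of sufficient quasi-length relative to the tube rank, so one cannot dispose of loops abstractly and must produce an explicit factorization through the other summands of $T$ using H\"ubner's exchange sequences. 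Making this factorization uniform across the domestic, tubular and wild cases is the technical heart of the argument.
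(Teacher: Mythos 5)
There are two genuine gaps. The first is your structural premise: you assert that, since $\Her$ is hereditary, the exchange pair $(M,M^\ast)$ fits into \emph{two} short exact sequences $0\to M\to B\to M^\ast\to 0$ and $0\to M^\ast\to B'\to M\to 0$ in $\Her$. This contradicts H\"ubner's theorem (quoted in the paper): exactly one of $\Ext^1_\Her(M,M^\ast)$ and $\Ext^1_\Her(M^\ast,M)$ is non-zero (and then one-dimensional). Consequently only one of the two Iyama--Yoshino exchange triangles in $\Clu$ is induced by an exact sequence of $\Her$; the other genuinely lives in $\Clu$ and its relevant compositions are of degree one. This is not cosmetic: the degree bookkeeping on the second, non-induced triangle is precisely what makes the $2$-cycle argument work, and your premise removes it.

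The second gap is that the $2$-cycle case is asserted rather than proved. ``Serre duality together with the tilting property'' only handles degree-zero arrows (in the paper this is the triangularity of $\End_\Her(T)$, via Happel), and the claim that mixed-degree $2$-cycles are ``absorbed through the same exchange approximation, hence vanish modulo the square of the Jacobson radical'' is not an argument. What is actually needed is the reduction that a $2$-cycle at the vertex of $M$ forces $B$ and $B'$ to share an indecomposable summand $U$, followed by a contradiction: since the induced triangle has degree-zero maps $u,v$ while triangularity of $\End_\Her(T)$ forces $s=p\circ u'$ and $t=v'\circ j$ to be of degree one, one rotates the induced triangle attached to $s$ and uses (TR3) to obtain $q$ with $q\circ v'=\tau f\circ p$, so the degree-zero morphism $\tau f=\tau f\circ p\circ j$ equals the degree-one morphism $q\circ t$ and hence vanishes, contradicting $f\neq 0$. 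None of this appears in your sketch. Your no-loop outline (factoring the degree-one radical endomorphisms of an exceptional summand through the $\add\oT$-approximation) and the reduction of condition (ii) to condition (i) via BIRS/Iyama--Yoshino are in the spirit of the paper's proof, but the proposal misidentifies the technical heart -- it is the absence of $2$-cycles, not of loops -- and rests on a false claim about the exchange sequences.
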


\begin{proof}
  By \cite[Thm.~I.1.6]{BIRS} we only have to show condition~(i).  We
  know from \cite{IyYo} that in $\Clu$ there exist exact triangles
  $M^\ast\xra{u} B\xra{v} M\ra M^\ast[1]$ and $M\xra{u'} B'\xra{v'}
  M^\ast\ra M[1]$, where $u$, $u'$ are minimal left and $v$, $v'$ are
  minimal right $\add(\oT)$-approximations. By \cite{Huebner} one of
  them, say the first, is induced by an exact sequence in $\Her$. As
  in \cite[Lem.~6.13]{5clu} we apply $\Hom_{\Der(\Her)}(F^{-1}M,-)$ to
  the first sequence and get that each radical morphism $f\colon M\ra
  M$ factors through the morphism $v$.  Hence $f$ is not irreducible
  in $\End_\Clu(T)$. This shows that there are no loops in $Q_T$.

  To see that there are no $2$-cycles in $Q_T$ it is enough to show
  that $B$ and $B'$ have no common indecomposable summand.  Assume
  that such a summand $U$ exists. Since the first triangle is induced
  by an exact sequence from $\Her$, the morphisms $u$, $v$ in $\Clu$
  are of degree zero. Denote by $j\in\Hom_\Her (U,B')$ and by
  $p\in\Hom_\Her (B',U)$ the canonical inclusion and projection,
  respectively. Since the endomorphism algebra $\End_\Her(T)$ is
  triangular~\cite[Lem.~IV.1.10]{Hap}, the non-zero morphisms
  $s=p\circ u'$ and $t=v' \circ j$ in $\Clu$ are of degree one. Thus
  $s$ is given by a non-split short exact sequence. This sequence
  induces a triangle $\tau^-U\xra{f} E\xra{g} M\xra{s}\tau^- U[1]$ in
  $\Der(\Her)$, yielding by rotation an induced triangle $M\xra{s}
  U\xra{\tau f} \tau E\xra{\tau g} \tau M$ in $\Clu$. Since $s=p\circ
  u'$, by axiom~\cite[(TR3)]{Ve} there exists a morphism
  $q\in\Hom_\Clu (M^\ast,\tau E)$ such that $q\circ v' =\tau f \circ
  p$. Therefore the degree zero morphism $\tau f=\tau f\circ p\circ j$
  equals the degree one morphism $q\circ t$, showing that both are
  zero. This yields a contradiction, since $f\neq 0$.
\end{proof}

\subsection*{Cluster-tilted algebras}

Considering the endomorphism rings of cluster-tilting objects we get a
new class of algebras.  For example, if $A$ is a canonical algebra
with $t=3$ weights and if $T$ is a tilting object in $\Her$ for which
$\End_\Her(T)=A$ then $A_\Clu=\End_\Clu(T)$ is given as quotient of
the path algebra of the following quiver $Q_T$ modulo the ideal $I$
generated by the elements described below (in $Q_T$ the arm with
arrows $x_i$ contains $p_i$ arrows).

\begin{center}
  \begin{picture}(300,58)
    \put(0,4){
    \put(-5,47){\RVCenter{$Q_T:$}}
    \put(0,22){
      \multiput(0,0)(143,0){2}{\circle*{3}}
      \multiput(0,-20)(0,40){2}{%
        \multiput(30,0)(20,0){2}{\circle*{3}}
        \multiput(93,0)(20,0){2}{\circle*{3}}
        \multiput(34,0)(63,0){2}{\vector(1,0){12}}
        \put(54,0){\line(1,0){8}}
        \multiput(66,0)(5,0){3}{\line(1,0){1}}
        \put(81,0){\vector(1,0){8}}
      }
      \put(0,10){
        \multiput(30,0)(20,0){2}{\circle*{3}}
        \multiput(93,0)(20,0){2}{\circle*{3}}
        \multiput(34,0)(63,0){2}{\vector(1,0){12}}
        \put(54,0){\line(1,0){8}}
        \multiput(66,0)(5,0){3}{\line(1,0){1}}
        \put(81,0){\vector(1,0){8}}
      }
      \multiput(3,2)(113,-20){2}{\vector(3,2){24}}
      \multiput(3,-2)(113,20){2}{\vector(3,-2){24}}
      \put(3.6,1.2){\vector(3,1){22.8}}
      \put(116.6,8.8){\vector(3,-1){22.8}}
      \put(10,11){\HBCenter{\small $x_1$}}
      \put(10,-11){\HTCenter{\small $x_3$}}
      \put(20,0){\HBCenter{\small $x_2$}}
      \put(133,11){\HBCenter{\small $x_1$}}
      \put(133,-11){\HTCenter{\small $x_3$}}
      \put(123,0){\HBCenter{\small $x_2$}}
      \multiput(40,23)(63,0){2}{\HBCenter{\small $x_1$}}
      \multiput(40,7)(63,0){2}{\HTCenter{\small $x_2$}}
      \multiput(40,-17)(63,0){2}{\HBCenter{\small $x_3$}}
      \qbezier(138,-1)(120,-7)(71.5,-7)
      \qbezier(5,-1)(23,-7)(71.5,-7)
      \put(5,-1){\vector(-3,1){0.01}}
      \put(71.5,-4){\HBCenter{\small $\eta$}}
    }
    \put(175,0){
      \put(-5,47){\RVCenter{$I:$}}
      \put(0,35){$x_1^{p_1}+x_2^{p_2}+x_3^{p_3}$}
      \put(0,20){$x_i^{p_i-a}\eta x_i^{a-1}$\quad for
        $a=1,\ldots,p_i$.}
      \put(60,7){and $i=1,\,2,\,3$.}
    }
    }
  \end{picture}
\end{center}
If $t\leq 2$ then $A_\Clu=A$. For $t\geq 4$ the description of
$A_\Clu$ is more complicated since the relations for the canonical
algebra contain parameters.

\section{Factorization}
\label{sec:Factorization}

\subsection*{Degree one morphisms}

We start with a general result.

\begin{Proposition} \label{prop:factor} Let $X$ and $Y$ be
  indecomposables in $\Clu_+$ and let $\Tt$ be a cluster tube
  from $\Clu_0$. Then each morphism $f\colon X\ra Y$ of degree one
  factors through $\Tt$.
\end{Proposition}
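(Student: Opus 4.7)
The plan is to interpret $f$ as an extension class in $\Her$ and to realize that class as a pullback of an extension whose right-hand term already lies in the tube $\Tt$. Under the identification $\Clu=\tHer$ of Proposition~\ref{prop:comp_in_tHer}, the degree one morphism $f\colon X\to Y$ corresponds to a class $\epsilon\in\Ext^1_\Her(X,\tau^-Y)$, that is, to a short exact sequence $0\to\tau^-Y\to E\to X\to 0$ in $\Her$. Let $x_0\in\XX$ be the point carrying $\Tt$, and let $\vec{x_0}\in\Pic(\XX)$ denote the associated positive element (equal to $\vec c$ at an ordinary point, and with $p_0\vec{x_0}=\vec c$ at an exceptional point of weight $p_0$). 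For each integer $N\ge 1$, the canonical inclusion $\tau^-Y\hookrightarrow\tau^-Y(N\vec{x_0})$ fits into a short exact sequence
\[
\xi_N\colon\ 0\to\tau^-Y\to\tau^-Y(N\vec{x_0})\to T_N\to 0
\]
whose cokernel $T_N$ is a torsion sheaf supported at $x_0$, so $T_N\in\Tt$.

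Next I would apply $\Hom_\Her(X,-)$ to $\xi_N$. Since $\Her$ is hereditary, the long exact sequence collapses to
\[
\Hom_\Her(X,T_N)\xrightarrow{\delta_N}\Ext^1_\Her(X,\tau^-Y)\to\Ext^1_\Her(X,\tau^-Y(N\vec{x_0}))\to 0,
\]
where the connecting map sends $g\colon X\to T_N$ to the pullback extension $\delta_N(g)=g^*\xi_N$.

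The hard part will be showing that the rightmost term vanishes for $N$ sufficiently large, which is what forces $\delta_N$ to be surjective. By Serre duality this group is dual to $\Hom_\Her(\tau^-Y(N\vec{x_0}),\tau X)\cong\Hom_\Her(\tau^-Y,\tau X(-N\vec{x_0}))$. Since $\deg\vec{x_0}>0$, twisting by $-N\vec{x_0}$ drives the slope of $\tau X(-N\vec{x_0})$ to $-\infty$ while $\tau^-Y$ is fixed; a standard slope/Harder--Narasimhan Hom-vanishing argument for vector bundles on $\XX$ then yields the desired vanishing for $N\gg 0$.

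Fixing such an $N$, surjectivity of $\delta_N$ produces $g\colon X\to T_N$ with $\epsilon=g^*\xi_N$. Translating back under the identification $\Clu=\tHer$, the class $\xi_N$ becomes a degree one morphism $\overline{\xi}_N\colon T_N\to Y$ in $\Clu$, the map $g$ is a degree zero morphism $X\to T_N$, and the composition formula for $\tHer$ recalled in Section~\ref{sec:Basics} gives $\overline{\xi}_N\circ g=f$. This exhibits $f$ as factoring through $T_N\in\Tt$, as required.
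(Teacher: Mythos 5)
Your argument is correct, and its skeleton matches the paper's: both identify the degree one morphism with a class $\eta\in\Ext^1_\Her(X,\tau^-Y)$, i.e.\ with a morphism $X\to\tau^-Y[1]$ in $\Der(\Her)$, factor that morphism through an object of the tube, and translate back via the composition rule of $\tHer$. The difference is how the factorization in $\Der(\Her)$ is obtained: the paper simply asserts it (it is the standard separation property of the tubular family $\Her_0$ in $\coh(\XX)$, cf.\ \cite{GeLe,LePe}), whereas you prove it directly, pulling back the extension along the twist sequence $0\to\tau^-Y\to\tau^-Y(N\vec{x_0})\to T_N\to 0$ and killing $\Ext^1_\Her(X,\tau^-Y(N\vec{x_0}))$ for $N\gg0$ by Serre duality and a slope/line-bundle-filtration vanishing --- the same device the paper itself uses in Lemma~\ref{non-zero-composition-2} via \cite[(S15)]{LePe}. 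Your route is longer but self-contained and constructive, exhibiting the factorization explicitly through a torsion sheaf supported at the point carrying $\Tt$ (and landing in $\add\Tt$, which is all that is needed). The only imprecision is calling the inclusion $\tau^-Y\hookrightarrow\tau^-Y(N\vec{x_0})$ canonical: at an ordinary point it is multiplication by a power of the section vanishing at $x_0$, so it depends on that choice, but any such choice has cokernel supported at $x_0$ and the argument is unaffected.
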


\begin{proof}
  By definition, we have $f=\eta$ with
  $\eta\in\Ext^1_\Her(X,\tau^{-}Y)$. In $\Der(\Her)$ the morphism
  $\eta\colon X\ra \tau^{-}Y[1]$ factors through some object $Z$ from
  $\Tt$, that is,
  $\eta=[X\xrightarrow{h}Z\xrightarrow{\eta'}\tau^{-}Y[1]]$.  Note
  that $h\in\Hom_\Clu(X,Z)$, $\eta'\in\Hom_\Clu(Z,Y)$, hence
  $\eta=\eta'\,h$ yields a factorization of $f$.
\end{proof}

\subsection*{The tubular case}

We now turn to the tubular case.  For $p,q\in\oQQ$, we define the
\emph{slope interval} from $p$ to $q$ as
$$
(p,q)_\Clu\colon =\begin{cases}
  (p,q),&\quad\text{ if }p<q,\\
  (p,\infty]\cup (-\infty,q),&\quad\text{ if }p>q,\\
  \oQQ\setminus\{q\},&\quad\text{ if }p=q,
\end{cases}
$$
where the intervals occurring on the right side are taken in
$\oQQ$. For further properties of $\oQQ$ we refer to
Section~\ref{sec:automorphisms}.

The following result states an important property of $\Clu$ which is
analogous to the well-known factorization properties for the derived
category of a tubular algebra.
\begin{Theorem}\label{Thm:Factorization} We assume that $\Clu$ is the
  cluster category of a tubular category $\Her$.
Let $p,q\in \oQQ$ and $f\colon X\rightarrow Y$ be a non-zero
morphism in $\Clu$ with indecomposable objects $X\in\Clu^{(p)}$
and $Y\in\Clu^{(q)}$.
 \begin{itemize}
\item[{\rm (i)}]
Let $p\neq q$.  Then $f$ factors through each tube of $\Clu^{(r)}$ if and
only if $r\in(p,q)_\Clu$.
\item[{\rm (ii)}]
Let $p=q$. Then
$f$ factors through each tube of $\Clu^{(r)}$ for each $r\neq q$ if and
only if $f$ lies in $\rad^\infty_\Clu$.
\end{itemize}
\end{Theorem}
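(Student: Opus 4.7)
The plan is to split the analysis by degree, using two inputs: the classical separating property of tubular families in $\Her$ (every non-zero morphism of slopes $p<q$ factors through each tube of intermediate slope) and its derived counterpart, which handles morphisms $X\to Y[1]$ in $\Der(\Her)$. For indecomposables of distinct slopes $p$ and $q$, the slope ordering forces $\Hom_\Clu(X,Y)$ to be homogeneous --- purely degree zero if $p<q$, purely degree one if $p>q$ --- so $f$ is either a morphism $f_0\in\Hom_\Her(X,Y)$ or an element $f_1\in\Ext^1_\Her(X,\tau^- Y)$ corresponding to $\tilde f\colon X\to\tau^- Y[1]$ in $\Der(\Her)$. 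For $p=q$ one writes $f=f_0+f_1$ with $f_0\in\Hom_\Her(X,Y)$ a morphism inside a single tube of $\Her$ and $f_1\in\Ext^1_\Her(X,\tau^- Y)$; by the corollary to Proposition~\ref{prop:AR-structure} we have $f_1\in\rad^\infty_\Clu$, while $f_0\neq 0$ lies in a finite power of the tube's radical, so $f\in\rad^\infty_\Clu$ if and only if $f_0=0$.

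For case~(i) with $p<q$ the morphism $f=f_0$ lies entirely in $\Her$. For $r\in(p,q)$, the classical tubular factorization in $\Her$ yields a factorization through each tube of $\Clu^{(r)}$. For $r\notin(p,q)$ I inspect every possible composition $X\to Z\to Y$ through $Z\in\Clu^{(r)}$: the slopes determine the admissible degrees of the two factors and force the composition either to vanish, to have degree one (contradicting $f_0\neq 0$), or --- in the boundary cases $r=p$ or $r=q$ --- to exist only through the single tube containing $X$ or $Y$. For $p>q$ the morphism $f=f_1$ corresponds to $\tilde f\colon X\to\tau^- Y[1]$ in $\Der(\Her)$. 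If $r>p$ I factor $\tilde f$ as $X\xra{h}Z\xra{\eta'}\tau^- Y[1]$ with $h\in\Hom_\Her(X,Z)$ (non-zero since $p<r$) and $\eta'\in\Ext^1_\Her(Z,\tau^- Y)$ (non-zero since $r>q$), exactly in the spirit of Proposition~\ref{prop:factor} with slope $r$ in place of $\infty$. If $r<q$ the dual factorization $X\xra{\eta}\tau^- Z[1]\xra{\tau^- g[1]}\tau^- Y[1]$ with $\eta\in\Ext^1_\Her(X,\tau^- Z)$ and $g\in\Hom_\Her(Z,Y)$ works symmetrically, and translates in $\Clu$ to $X\to Z\to Y$ with $X\to Z$ of degree one and $Z\to Y$ of degree zero. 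The non-factorization for $r\in[q,p]$ is once more a degree count.

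For case~(ii), $p=q$: if $f=g\circ h$ factors through some $Z\in\Clu^{(r)}$ with $r\neq q$, then the slope constraint forces one of $h_0$, $g_0$ to vanish, so the degree zero part $g_0 h_0=f_0$ must be zero and hence $f\in\rad^\infty_\Clu$. Conversely, if $f=f_1$ is pure degree one, the derived-category factorizations of case~(i) apply: use $X\xra{h_0}Z\xra{g_1}Y$ for $r>q$ and $X\xra{h_1}Z\xra{g_0}Y$ for $r<q$. The main obstacle throughout is promoting Proposition~\ref{prop:factor} from $r=\infty$ to an arbitrary tube of slope $r$ outside $[q,p]$; this rests on the tubular structure of $\Her$, which ensures that a non-zero morphism $X\to Y[1]$ in $\Der(\Her)$ factors through each tube separating the slopes of $X$ and $Y[1]$ in the appropriate derived sense on $\oQQ$. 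Once this tubular factorization for extensions is secured, all non-factorization statements follow from the degree bookkeeping already sketched.
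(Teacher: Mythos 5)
Your argument is correct and follows essentially the same route as the paper: homogeneity of $\Hom_\Clu$ between distinct slopes, the separating/factorization property of tubular families in $\Her$ and its derived counterpart for the degree-one case, degree bookkeeping for the non-factorization and boundary slopes, and the vanishing of $\rad^\infty$ within a tubular family for (ii). The one step you defer — the factorization of a morphism $X\to\tau^-Y[1]$ through tubes of arbitrary intermediate slope — is exactly the input the paper also imports from Lenzing--Meltzer \cite{LeMe2}, so there is no substantive difference.
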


\begin{proof}
  Note first that each morphism $g\in\Hom_\Clu(\Clu^{(s)},\Clu^{(t)})$
  has degree zero (resp.\ one) if $s<t$ (resp.\ $s>t$). Now if $p<q$
  then $\Hom_\Clu(X,Y)=\Hom_\Her(X,Y)$ and by \cite{LeMe2} the
  morphism $f$ factors through each tube of $\Clu^{(r)}$ if
  $r\in(p,q)_\Clu$. For $r=p$ or $r=q$ clearly $f$ does not factor
  through each tube of $\Clu^{(r)}$. In the remaining cases where
  $r\notin (p,q)_\Clu$, each composition $X\xrightarrow{g}Z
  \xrightarrow{h}Y$ with $Z\in\Clu^{(r)}$ has degree one and therefore
  $hg\neq f$.

If $p>q$ then $f\in\Hom_\Clu(X,Y)=\Hom_{\Der(\Her)}(X,\tau^{-}Y)$
factors through each tube of $\Her^{(r)}$ (for $r\in (p,\infty)$) and
each tube of $\Her^{(r)}[1]$ (for $r\in(-\infty,q)$), thus through any
cluster tube of $\Clu^{(r)}$ for $r\in(p,q)_\Clu$. The case $r=p$ or $r=q$ is
similar as before and for the remaining slopes $r\notin(p,q)_\Clu$
each composition $hg$ ($h,g$ and $Z$ as before) is zero as composition
of morphisms of degree one. This proves assertion (i).

For (ii) we observe that the degree one part $f_1$ of $f$ belongs to
$\Hom_{\Der(\Her)}(X,FY)$ hence also to the infinite radical of
$\Der(\Her)$. Thus $f_1$ factors through any tube of $\Her^{(r)}[1]$
for $r\in(p,p)_\Clu$, and the assertion is true if $f_0=0$. If $f_0$
is non-zero then $f_0$ does not belong to the infinite radical of
$\Her$ and therefore $f$ does not belong to the infinite radical of
$\Clu$. This shows that $X$ and $Y$ lie in the same tube. The result
follows.
 \end{proof}

\subsection*{Recovering $\Her$ from $\Clu$}
\label{subsec:recover_H_from_C}

We now focus on the problem of reconstructing the original category
$\Her$ from $\Clu$ (up to equivalence), assuming only intrinsic
properties of the
category $\Clu$.

We know already from Remark~\ref{Remark:factorIdeal} that the
morphisms of degree one form a two-sided ideal $\Ii$ such that
$\Clu/\Ii\xrightarrow{\sim} \Her$ canonically. We are now going to
show how $\Ii$ can be recovered from $\Clu$ intrinsically. This is
possible without any extra choice if $\Her$ has Euler characteristic
different from zero. In the tubular
case  the reconstruction will depend on the choice of a
tubular family $\Clu^{(q)}$ in $\Clu$. By means of a suitable
autoequivalence of $\Der(\Her)$, see~\cite{LeMe2}, we may then choose
$q=\infty$, that is, $\Clu^{(q)}=\Clu_0$.

\begin{Proposition} \label{prop:ideal}
Regardless of the Euler characteristic, the ideal $\Ii$ of degree one
morphisms is given on indecomposables $X$ and $Y$ by 
$$
\Ii(X,Y)=
\begin{cases}
0 & \text{if } X\in \Clu_+ ,\, Y\in\Clu_0\\
\Hom_\Clu(X,Y) & \text{if } X\in\Clu_0,\, Y\in\Clu_+\\
\rad^\infty_\Clu(X,Y) & \text{if } X,Y\in\Clu_0\\
\Ff(X,Y)& \text{if } X,Y\in\Clu_+
\end{cases}
$$
where $\Ff(X,Y)$ consists of all $f\in\Hom_\Clu(X,Y)$ factoring
through $\Clu_0$.
\end{Proposition}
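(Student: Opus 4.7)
The plan is to verify the four cases in turn, keeping in mind throughout that $\Ii(X,Y)=\Hom_\Clu(X,Y)_1=\Ext^1_\Her(X,\tau^- Y)$ and that $\Clu$ and $\Her$ share the same indecomposable objects by Proposition~\ref{prop:CandH}.

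First I would dispatch the two mixed cases by means of Serre duality and the torsion versus torsion-free dichotomy in $\coh(\XX)$. For $X\in\Clu_+$ and $Y\in\Clu_0$, Serre duality gives
$$
\Ii(X,Y)=\Ext^1_\Her(X,\tau^- Y)\cong \dual\Hom_\Her(\tau^- Y,\tau X),
$$
and since $\tau$ preserves $\Her_0$ and $\Her_+$ separately, the right-hand side is a Hom space from a finite length sheaf to a vector bundle, hence zero. For $X\in\Clu_0$ and $Y\in\Clu_+$ the same torsion versus torsion-free argument shows that $\Hom_\Her(X,Y)=0$, so the whole of $\Hom_\Clu(X,Y)$ is concentrated in degree one, which is the asserted equality $\Ii(X,Y)=\Hom_\Clu(X,Y)$.

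For the case $X,Y\in\Clu_0$, the inclusion $\Ii(X,Y)\subseteq\rad_\Clu^\infty(X,Y)$ is already the Corollary following Proposition~\ref{prop:AR-structure}. For the converse I would note that the canonical projection $\pi\colon\Clu\to\Clu/\Ii\cong\Her$ carries $\rad_\Clu^n$ into $\rad_\Her^n$ for every $n$, and hence $\pi(\rad_\Clu^\infty)\subseteq\rad_\Her^\infty$. Within $\Her_0$, any two indecomposables either lie in a single tube, where the infinite radical vanishes by the standard structure theory of tubes, or in distinct tubes, where $\Hom_\Her$ itself vanishes by orthogonality. In either situation the degree zero component $f_0=\pi(f)$ of any $f\in\rad_\Clu^\infty(X,Y)$ is forced to be zero, so $f\in\Ii(X,Y)$.

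Finally for $X,Y\in\Clu_+$ the inclusion $\Ii(X,Y)\subseteq\Ff(X,Y)$ is exactly Proposition~\ref{prop:factor}. For the reverse I would take an arbitrary factorization $f=hg$ through an indecomposable $Z\in\Clu_0$, decompose $g=g_0+g_1$ according to degree, and invoke the second case above to conclude that $h\in\Ii(Z,Y)$ is of pure degree one. Since the product of two degree one morphisms vanishes in $\tHer$ while the product of a degree zero with a degree one morphism again has degree one, one obtains $f=hg_0\in\Ii(X,Y)$. The main obstacle I foresee lies in the third case: aligning the infinite radicals in $\Clu$ and $\Her$ requires the combined input that $\Ii\subseteq\rad_\Clu^\infty$ and that $\pi$ respects the radical filtration, thereby transferring the well-understood vanishing of $\rad_\Her^\infty$ inside a tube to the corresponding statement in $\Clu_0$.
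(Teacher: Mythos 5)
Your proposal is correct and takes essentially the same route as the paper: the two mixed cases come down to $\Hom_\Her(\Her_0,\Her_+)=0=\Ext^1_\Her(\Her_+,\Her_0)$, the case $X,Y\in\Clu_0$ to the vanishing of $\rad^\infty_\Her$ on $\Her_0$ (together with the Corollary that degree one morphisms lie in $\rad^\infty_\Clu$), and the case $X,Y\in\Clu_+$ to Proposition~\ref{prop:factor}. You merely spell out details the paper leaves implicit, such as the compatibility of $\pi\colon\Clu\to\Clu/\Ii\simeq\Her$ with the radical filtration and the degree bookkeeping for the reverse inclusion $\Ff(X,Y)\subseteq\Ii(X,Y)$.
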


\begin{proof}
Since $\Hom_\Her(\Her_0,\Her_+)=0=\Ext^1_\Her(\Her_+,\Her_0)$ the
assertion follows for the first two cases. For the third case we use
that $\rad^\infty_\Her(\Her_0,\Her_0)$ equals zero. The last case is
covered by Proposition~\ref{prop:factor}.
\end{proof}

\begin{Corollary}\label{cor:recover}
The category $\Her$ can always be recovered from $\Clu$ as the
quotient of $\Clu$ by a two-sided ideal $\Ii$. This ideal is unique if
the Euler characteristic is non-zero. In the tubular case it only
depends on the choice of a tubular family $\Clu^{(q)}$ in $\Clu$, $q
\in \oQQ$. \proofend
\end{Corollary}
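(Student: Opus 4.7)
The plan is to derive the corollary directly from Remark~\ref{Remark:factorIdeal} and Proposition~\ref{prop:ideal}. The equivalence $\Her\simeq\Clu/\Ii$ is essentially tautological: once $\Ii$ is identified as the ideal of degree-one morphisms, quotienting retains precisely the degree-zero part, which is $\Her$ by the description $\Clu=\tHer$. So everything comes down to showing how $\Ii$ can be read off intrinsically from $\Clu$.

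First I would observe that the four-case formula of Proposition~\ref{prop:ideal} determines $\Ii$ on indecomposables as soon as one knows which objects of $\Clu$ lie in $\Clu_0$ and which in $\Clu_+$: the remaining ingredients, the infinite radical $\rad^\infty_\Clu$ and the ideal of morphisms factoring through $\Clu_0$, are intrinsic invariants of the $k$-linear category $\Clu$ once the full subcategory $\Clu_0$ is specified. Hence the whole problem reduces to recovering $\Clu_0$ from the purely categorical data of $\Clu$.

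Next I would invoke the intrinsic notion of a \emph{tubular family} in $\Clu$, namely a maximal family of pairwise orthogonal cluster tubes; this notion refers only to the triangulated structure of $\Clu$. By the classification recalled in Section~\ref{sec:Basics}, if $\chi_\Her\neq 0$ then $\Clu_0$ is the unique tubular family, giving a canonical identification and hence uniqueness of $\Ii$. If $\chi_\Her=0$, the tubular families are exactly the $\Clu^{(q)}$ with $q\in\oQQ$, so one is forced to pick one. Once $\Clu^{(q)}$ is fixed, one applies the autoequivalence of $\Der(\Her)$ from \cite{LeMe2} that moves $q$ to $\infty$, reducing to the situation $\Clu^{(q)}=\Clu_0$ in which Proposition~\ref{prop:ideal} applies verbatim.

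The main subtle point is the tubular case: one must verify that the resulting ideal $\Ii$ depends only on the chosen tubular family and not on any auxiliary data. This is precisely the content of the transitive action of autoequivalences on the set of tubular families furnished by \cite{LeMe2}. Modulo that input, the rest of the argument is straightforward bookkeeping from Proposition~\ref{prop:ideal}.
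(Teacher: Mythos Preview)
Your proposal is correct and follows essentially the same approach as the paper: the paper gives no separate proof (the statement is marked \verb|\proofend|), treating the corollary as an immediate consequence of Remark~\ref{Remark:factorIdeal}, the preceding discussion about choosing a tubular family and reducing to $q=\infty$ via~\cite{LeMe2}, and Proposition~\ref{prop:ideal}. Your write-up simply makes explicit the reasoning the paper leaves implicit.
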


\section{Cluster tubes}
\label{sec:cluster_tubes}

\subsection*{Notation}

Let $\Tt$ be a cluster tube of rank $p$.  We denote the indecomposable
objects of $\Tt$ by $X^{(n)}_i$, $i\in\ZZ_p$ and $n$ an integer $\geq
1$, such that $\tau X^{(n)}_i=X^{(n)}_{i+1}$. Furthermore we choose
irreducible morphisms $\iota_i^{(n)}\colon X_i^{(n)}\rightarrow
X_i^{(n+1)}$ and $\pi_i^{(n)}\colon X_{i+1}^{(n+1)}\rightarrow
X_i^{(n)}$ satisfying
\begin{equation}
  \label{eq:comm-AR}
\iota^{(n-1)}_{i} \pi^{(n-1)}_{i}=\pi^{(n)}_{i} \iota^{(n)}_{i+1}
\end{equation}
for all $n\geq 1$ and $i\in\ZZ_p$. Here we used the convention
$X_i^{(0)}=0$ (and consequently $\iota_i^{(0)}=0$, $\pi_i^{(0)}=0$).
Whenever possible we will skip the indices and just write $\iota$ and
$\pi$. The situation is illustrated by the following figure.

\begin{center}
  \begin{picture}(240,85)
  \put(0,0){
    \put(0,60){\HVCenter{\small $X_{3}^{(3)}$}}
    \put(0,0){\HVCenter{\small $X_{2}^{(1)}$}}
    \put(30,30){\HVCenter{\small $X_{2}^{(2)}$}}
    \put(60,60){\HVCenter{\small $X_{2}^{(3)}$}}
    \put(60,0){\HVCenter{\small $X_{1}^{(1)}$}}
    \put(90,30){\HVCenter{\small $X_{1}^{(2)}$}}
    \put(120,60){\HVCenter{\small $X_{1}^{(3)}$}}
    \put(120,0){\HVCenter{\small $X_{p}^{(1)}$}}
    \put(150,30){\HVCenter{\small $X_{p}^{(2)}$}}
    \put(180,60){\HVCenter{\small $X_{p}^{(3)}$}}
    \put(180,0){\HVCenter{\small $X_{p-1}^{(1)}$}}
    \put(210,30){\HVCenter{\small $X_{p-1}^{(2)}$}}
    \put(240,60){\HVCenter{\small $X_{p-1}^{(3)}$}}
    \put(240,0){\HVCenter{\small $X_{p-2}^{(1)}$}}
    \multiput(0,0)(60,0){4}{
      \put(7,7){\vector(1,1){15}}
      \put(7,53){\vector(1,-1){15}}
      \put(17,9){\small $\iota$}
      \put(15,47){\small $\pi$}
      \put(37,37){\vector(1,1){15}}
      \put(47,39){\small $\iota$}
      \put(45,17){\small $\pi$}
      }
    \multiput(0,0)(60,0){3}{
      \put(37,23){\vector(1,-1){16}}
      }
    \put(216,21){\vector(1,-1){14}}
    \multiput(0,60)(60,0){5}{
      \put(7,7){\qbezier(0,0)(3,3)(6,6)}
      \multiput(15,15)(3,3){3}{\qbezier(0,0)(0.75,0.75)(1.5,1.5)}
      \put(-7,7){\qbezier(-2,2)(-4,4)(-6,6)}
      \put(-7,7){\vector(1,-1){0.01}}
      \multiput(-15,15)(-3,3){3}{\qbezier(0,0)(-0.75,0.75)(-1.5,1.5)}
      }
    \put(240,0){
      \put(7,7){\qbezier(0,0)(3,3)(6,6)}
      \multiput(15,15)(3,3){3}{\qbezier(0,0)(0.75,0.75)(1.5,1.5)}
      }
    \put(0,0){
      \put(-7,7){\qbezier(-2,2)(-4,4)(-6,6)}
      \put(-7,7){\vector(1,-1){0.01}}
      \multiput(-15,15)(-3,3){3}{\qbezier(0,0)(-0.75,0.75)(-1.5,1.5)}
      }
    \put(0,60){
      \put(-7,-7){\qbezier(-2,-2)(-4,-4)(-6,-6)}
      \put(-7,-7){\vector(1,1){0.01}}
      \multiput(-15,-15)(-3,-3){3}{\qbezier(0,0)(-0.75,-0.75)(-1.5,-1.5)}
      }
    \put(240,60){
      \put(11,-11){\qbezier(0,0)(2,-2)(4,-4)}
      \multiput(17,-17)(3,-3){3}{\qbezier(0,0)(0.75,-0.75)(1.5,-1.5)}
      }
    }
  \end{picture}
\end{center}

\subsection*{Certain Yoneda products}

We shall show the following result concerning the Yoneda product.

\begin{Lemma}\label{lem:yp}
Let $\iota_i^{(n)}\colon X_i^{(n)}\rightarrow X_i^{(n+1)}$.
\begin{itemize}
\item[(a)]
  For any object $Z$ the linear map
$$
\Ext_\Her^1(X_i^{(n+1)},Z)\rightarrow
\Ext_\Her^1(X_i^{(n)},Z),\quad
\eta\mapsto\eta\iota_i^{(n)}
$$
is surjective.
\item[(b)]
  For $Z=X_j^{(n)}$ the linear map
$$
\Ext_\Her^1(X_{i}^{(n+1)},Z)\rightarrow
\Ext_\Her^1(X_{i}^{(n)},Z),\quad
\eta\mapsto\eta\iota_i^{(n)}
$$
is bijective.
\end{itemize}
\end{Lemma}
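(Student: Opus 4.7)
The plan is to deduce both parts from a single short exact sequence in $\Her$ arising from the irreducible monomorphism $\iota_i^{(n)}$. Since $\iota_i^{(n)}$ is a map between finite-length indecomposables of lengths $n$ and $n+1$, it is injective and has cokernel of length one, hence quasi-simple. Iterating the surjections $\pi$ starting from $X_i^{(n+1)}$ identifies its top as $X_{i-n}^{(1)}$, giving the short exact sequence
\[
 0 \to X_i^{(n)} \xrightarrow{\iota_i^{(n)}} X_i^{(n+1)} \to X_{i-n}^{(1)} \to 0 \qquad (\dagger)
\]
in $\Her$.

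For (a), applying $\Hom_\Her(-,Z)$ to $(\dagger)$ produces a long exact sequence; since $\Her$ is hereditary, $\Ext^2_\Her(X_{i-n}^{(1)},Z)=0$ and the LES terminates in a surjection $\Ext^1_\Her(X_i^{(n+1)},Z) \twoheadrightarrow \Ext^1_\Her(X_i^{(n)},Z)$. This last map is precisely $\eta \mapsto \eta\,\iota_i^{(n)}$, yielding the surjectivity claim.

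For (b), the same LES contains the four-term segment
\[
\Hom_\Her(X_i^{(n)},Z) \xrightarrow{\delta} \Ext^1_\Her(X_{i-n}^{(1)},Z) \to \Ext^1_\Her(X_i^{(n+1)},Z) \xrightarrow{(\iota_i^{(n)})^\ast} \Ext^1_\Her(X_i^{(n)},Z),
\]
so injectivity of $(\iota_i^{(n)})^\ast$ is equivalent to surjectivity of the connecting map $\delta$. Specialising to $Z=X_j^{(n)}$, Serre duality rewrites the target as $\dual\Hom_\Her(X_j^{(n)}, X_{i-n+1}^{(1)})$; iterating $\pi$ shows that the top of $X_j^{(n)}$ is $X_{j-n+1}^{(1)}$, and since any morphism from $X_j^{(n)}$ to a quasi-simple factors through its unique simple quotient, this Hom-space vanishes when $i\not\equiv j\pmod p$ and is $1$-dimensional when $i\equiv j\pmod p$. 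In the former case $\delta$ is trivially surjective; in the latter both the source $\End_\Her(X_i^{(n)})=k$ and the target are $1$-dimensional, and $\delta(\mathrm{id}_{X_i^{(n)}})$ is represented by the sequence $(\dagger)$ itself, which is non-zero as $(\dagger)$ does not split (its middle term $X_i^{(n+1)}$ is indecomposable). Hence $\delta$ is surjective in every case. The only genuine obstacle is the combinatorial bookkeeping needed to identify the cokernel of $\iota_i^{(n)}$ and the top of $X_j^{(n)}$ under the chosen conventions for $\iota,\pi$; once these are fixed, the result reduces to a straightforward long-exact-sequence and Serre-duality computation.
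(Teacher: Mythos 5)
Your proof is correct. Part (a) is exactly the paper's argument: apply $\Hom_\Her(-,Z)$ to the short exact sequence $0\to X_i^{(n)}\xrightarrow{\iota} X_i^{(n+1)}\to S\to 0$ and use heredity to kill $\Ext^2$. For part (b) you take a genuinely different (though closely related) route. The paper proves injectivity by showing that the map $v^\ast\colon\Ext^1_\Her(S,Z)\to\Ext^1_\Her(X_i^{(n+1)},Z)$ vanishes: under Serre duality this is dual to $\Hom_\Her(Z,\tau X_i^{(n+1)})\to\Hom_\Her(Z,\tau S)$, and since $Z$ has length $n$ while $\tau X_i^{(n+1)}$ is uniserial of length $n+1$, every map $Z\to\tau X_i^{(n+1)}$ lands in the radical, so the composite with the projection onto $\tau S$ is zero -- no identification of $S$ or dimension count is needed. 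You instead prove the equivalent statement that the connecting map $\delta\colon\Hom_\Her(X_i^{(n)},Z)\to\Ext^1_\Her(S,Z)$ is onto, which forces you to identify $S$ and the top of $X_j^{(n)}$ explicitly, compute via Serre duality that $\Ext^1_\Her(S,Z)$ is zero unless $i\equiv j$ and one-dimensional otherwise, and then observe that $\delta(\mathrm{id})$ is the class of the non-split sequence itself. This works, and the extra payoff is an explicit computation of $\Ext^1_\Her(S,X_j^{(n)})$, at the cost of more combinatorial bookkeeping than the paper's length argument. One small inaccuracy: $\End_\Her(X_i^{(n)})$ is not $k$ in general (its dimension grows once $n$ exceeds the rank $p$ of the tube), but this is harmless, since your argument only uses that the target of $\delta$ is one-dimensional and that $\delta(\mathrm{id})\neq 0$.
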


\begin{proof}
  To simplify notations write $X=X_i^{(n)}$, $Y=X_i^{(n+1)}$ and
  $\iota=\iota_i^{(n)}\colon X\rightarrow Y$.
  For (a) apply $\Hom(-,Z)$ to the short
  exact sequence
  $0\rightarrow X\xrightarrow{\iota}
    Y\xrightarrow{v}S\rightarrow 0$
  to get
  \begin{equation}
    \label{eq:ext-seq}
  \Ext^1(S,Z)\xrightarrow{v^\ast}
  \Ext^1(Y,Z)\xrightarrow{\iota^\ast}
  \Ext^1(X,Z)\rightarrow 0,
  \end{equation}
  where the last term $\Ext^2(S,Z)$ is zero because $\Her$ is
  hereditary.

  To see the injectivity in (b) apply Serre duality to
  \eqref{eq:ext-seq} to get
  $$
  \dual \Hom(Z,\tau S)\xrightarrow{\dual v^\ast}
  \dual \Hom(Z,\tau Y)\xrightarrow{\dual \iota^\ast}
  \dual \Hom(Z,\tau X).
  $$
  Since $Z$ has length $n$ whereas the
  uniserial object $Y$ has length $n+1$ the image of each morphism
  $f\in \Hom(Z,\tau Y)$ lies in the unique maximal submodule
  $\rad Y$ and thus $v\circ f=0$.
  This shows that $v^\ast\colon \Hom(Z,\tau Y)\rightarrow \Hom(Z,\tau S)$
  is zero.
\end{proof}

\subsection*{Automorphisms of cluster tubes}

To investigate automorphisms of cluster tubes from $\Clu$ we start
with a preliminary result.

\begin{Lemma} \label{lem:radical}
The infinite radical $\rad^\infty_{\Tt}$ of a cluster tube $\Tt$ equals $\Tt\cap\rad^\infty_\Clu$.
\end{Lemma}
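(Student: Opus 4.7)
The plan is to handle the two inclusions separately. Since $\Clu$ is Krull-Remak-Schmidt, for indecomposables $X,Y$ of the full subcategory $\Tt$ one has $\rad_\Tt(X,Y)=\rad_\Clu(X,Y)$, so any $n$-fold composition certifying membership in $\rad^n_\Tt$ also certifies membership in $\rad^n_\Clu$. This gives $\rad^\infty_\Tt\subseteq\Tt\cap\rad^\infty_\Clu$ immediately.

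For the reverse inclusion I would take $f\in\rad^\infty_\Clu(X,Y)$ with $X,Y\in\Tt$ indecomposable, and split $f=f_0+f_1$ in the $\tHer$-model. From the composition rule $(g_0+g_1)\circ(f_0+f_1)=g_0 f_0+(\tau^{-}g_0 f_1+g_1 f_0)$ taking the degree-zero part is multiplicative, and by the invertibility criterion used in the proof of Proposition~\ref{prop:CandH}, the degree-zero part of any radical morphism of $\Clu$ is radical in $\Her$. Hence $f\in\rad^n_\Clu$ forces $f_0\in\rad^n_\Her(X,Y)$ for every $n$, and since $X,Y$ lie in a single tube of $\Her_0$ the vanishing $\rad^\infty_\Her(\Her_0,\Her_0)=0$ (already invoked in the proof of Proposition~\ref{prop:ideal}) gives $f_0=0$. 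The problem thus reduces to showing that every pure degree-one morphism $f_1\in\Hom_\Clu(X,Y)_1$ belongs to $\rad^\infty_\Tt$.

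This is the only real step, and Lemma~\ref{lem:yp}(a) is exactly the right tool. Writing $X=X_i^{(m)}$, the surjectivity in that lemma produces $\tilde\eta\in\Ext^1_\Her(X_i^{(m+1)},\tau^{-} Y)$ with $\tilde\eta\,\iota_i^{(m)}=f_1$, which in $\Clu$ reads $f_1=\tilde\eta\circ\iota_i^{(m)}$, a factorisation through $X_i^{(m+1)}\in\Tt$. Both factors are radical in $\Tt$: the degree-one morphism $\tilde\eta$ lies in $\Ii\subseteq\rad_\Clu$, while $\iota_i^{(m)}$ is a non-isomorphism between indecomposables of $\Tt$. Hence $f_1\in\rad^2_\Tt$. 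Iterating the same lift on $\tilde\eta$ along the ray $X_i^{(m)},X_i^{(m+1)},X_i^{(m+2)},\dots$ stretches $f_1$ into arbitrarily long compositions of radical morphisms of $\Tt$, yielding $f_1\in\rad^n_\Tt$ for every $n$, and therefore $f_1\in\rad^\infty_\Tt$. The conceptual obstacle is spotting that Lemma~\ref{lem:yp}(a) can be applied repeatedly in this way to deepen a single degree-one morphism to arbitrary radical depth inside the tube; once that is seen, the induction is purely bookkeeping.
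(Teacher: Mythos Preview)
Your proof is correct and takes a genuinely different route from the paper's. The paper argues the nontrivial inclusion by first moving $\Tt$ into $\Clu_0$ via an autoequivalence from \cite{LeMe}, then observing (using orthogonality of the cluster tubes in $\Clu_0$) that any long radical factorisation of $u\colon U_1\to U_2$ either stays inside $\Tt$ or contains a segment $U_1'\to E\to U_2'$ with $E\in\Clu_+$; it then shows that every such composition lies in $\rad^\infty_\Tt$ by factoring the degree-zero map $v\colon E\to U_2'$ through arbitrarily many sink maps of Auslander--Reiten sequences in the tube. You instead split $f=f_0+f_1$, use that the degree-zero projection $\Clu\to\Her$ preserves radical depth to force $f_0\in\rad^\infty_\Her=0$ on the tube, and then iterate Lemma~\ref{lem:yp}(a) along the ray through $X$ to show that every degree-one morphism of $\Tt$ already lies in $\rad^\infty_\Tt$. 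Both proofs ultimately deepen a morphism by composing with irreducibles in the tube, but the paper does so on the target side via Auslander--Reiten theory of the ambient category, while you work on the source side with the $\iota$-maps and never leave $\Tt$ once $f_0$ is killed. Your argument is more self-contained and reuses Lemma~\ref{lem:yp}(a); the paper's has the virtue of being a single unified step without the degree decomposition. One small point: your sentence ``since $X,Y$ lie in a single tube of $\Her_0$'' tacitly assumes $\Tt\subset\Clu_0$, which in the tubular case is not automatic; you should either invoke the reduction via \cite{LeMe} as the paper does, or note directly that $\rad^\infty_\Her$ vanishes on any tube of $\Her$ regardless of slope.
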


\begin{proof}
It suffices to show that each $u\in \Tt\cap \rad^\infty_\Clu$ belongs to
$\rad^\infty_\Tt$. By \cite{LeMe} we may assume that $\Tt$ is a cluster
tube in $\Clu_0$. For this it is enough to show that each composition
$U_1 \xrightarrow{}E\xrightarrow{v}U_2$ belongs to $\rad^\infty_\Tt$,
where $U_1$, $U_2$ are indecomposables from $\Tt$ and $E$ lies in
$\Clu_+$. This uses that $\Clu_0$ consists of a family of pairwise
orthogonal cluster tubes. Note that $v$ belongs to $\Hom_\Her(E,U_2)$
since $\Ext^1_\Her(\Her_+,\Her_0)=0$. By Auslander-Reiten theory we
get a representation $v=u_n\cdots u_1v_n$ with radical morphisms $u_i$
from $\Tt$ for each integer $n\geq1$. This proves the claim.
\end{proof}

The following result is crucial for our analysis of automorphism
groups of the cluster category of a canonical
algebra.
\begin{Proposition}\label{Prop:exc_tube_iso}
 We fix an admissible triangulated structure on $\Clu$.
 Let $\Tt\subset\Clu$ be a cluster tube of rank $p\geq 1$. If $G\colon
 \Tt\rightarrow
  \Tt$ is an autoequivalence, sending induced triangles to exact
  triangles, and inducing the identity functor on $\Tt/\rad^\infty_\Tt$,
  then there is an isomorphism $\psi\colon 1 \xra{\sim} G$ of functors on
  $\Tt$ such that each $\psi_X\colon X\xra{\sim} G(X)$ has the form
  $\psi_X=1_X+\eta_X$, where $\eta_X$ is of degree one.
\end{Proposition}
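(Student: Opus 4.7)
The plan is to translate the statement into a linear cocycle equation and solve it by induction on the quasi-length of the indecomposables in $\Tt$, with Lemma~\ref{lem:yp} as the essential tool.

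First, I would reduce to the case $\Tt\subset\Clu_0$ by the argument already used in the proof of Lemma~\ref{lem:radical} (invoking \cite{LeMe}). In this situation $\rad^\infty_\Tt(X,Y)=\Ext^1_\Her(X,\tau^- Y)=\Hom_\Clu(X,Y)_1$, so the hypothesis on $G$ becomes: after replacing $G$ by an isomorphic functor (which is innocuous for the conclusion) one has $G(X)=X$ on objects, and $\epsilon_f:=G(f)-f$ is of degree one for every morphism $f$ in $\Tt$. Because products of two degree-one morphisms vanish (Remark~\ref{Remark:factorIdeal}), writing $\psi_X=1_X+\eta_X$ with $\eta_X$ of degree one automatically yields $\psi_X^{-1}=1_X-\eta_X$, and naturality $G(f)\psi_X=\psi_Y f$ collapses to the linear cocycle equation
\begin{equation*}
  \eta_Y\circ f - f\circ\eta_X \;=\; \epsilon_f, \qquad f\colon X\to Y.
\end{equation*}

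I would construct the elements $\eta_{X_i^{(n)}}\in\Ext^1_\Her(X_i^{(n)},\tau^- X_i^{(n)})$ by induction on $n$. For $n=1$ one can take $\eta_{X_i^{(1)}}=0$; between objects of level one the only degree-zero morphisms are scalar multiples of identities or zero, and on a non-zero degree-one morphism $f$ the cocycle equation reduces to $\epsilon_f=0$, which I would verify by representing $f$ via an induced triangle and applying $G$, using the hypothesis that $G$ sends induced triangles to exact triangles. For the inductive step, given $X=X_i^{(n+1)}$, I would apply Lemma~\ref{lem:yp}(a) with $Z=\tau^- X$ to produce $\eta_X$ satisfying $\eta_X\iota_i^{(n)}-\iota_i^{(n)}\eta_{X_i^{(n)}}=\epsilon_{\iota_i^{(n)}}$; the remaining freedom in $\eta_X$, i.e.\ the kernel of precomposition with $\iota_i^{(n)}$, would then be used to meet simultaneously the analogous equation for the second irreducible morphism $\pi_{i-1}^{(n)}\colon X\to X_{i-1}^{(n)}$ out of $X$. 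Consistency of the two constraints is forced by the mesh relation \eqref{eq:comm-AR} together with the injectivity statement of Lemma~\ref{lem:yp}(b).

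Once naturality is established for the irreducibles $\iota$ and $\pi$, it propagates to any composition $gf$ via the Leibniz-type identity $\eta_Z(gf)-(gf)\eta_X=g(\eta_Y f-f\eta_X)+(\eta_Z g-g\eta_Y)f$, and hence to every degree-zero morphism by $k$-bilinearity. On degree-one morphisms the cocycle equation degenerates to $\epsilon_f=0$, which is treated exactly as in the base case. The principal obstacle I anticipate is the inductive step itself: arranging for a single $\eta_X$ to be compatible with both the $\iota$- and the $\pi$-equation at the same time. This is precisely where the surjectivity and the injectivity parts of Lemma~\ref{lem:yp} must be combined with the AR mesh relation \eqref{eq:comm-AR}.
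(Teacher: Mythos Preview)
Your approach is essentially the paper's, recast as a cocycle equation. The paper also builds the $\eta_X$'s inductively from the $\iota$-equations alone via Lemma~\ref{lem:yp}(a), passes to the conjugated functor $G'=\psi^{-1}G\psi$, then verifies $G'(\pi)=\pi$ using the mesh relation and Lemma~\ref{lem:yp}(b), and finally treats degree-one morphisms via induced triangles and axiom (TR3). The one simplification you are missing is that no ``remaining freedom'' in $\eta_X$ is needed for the $\pi$-equation: once the $\iota$-equations hold, the $\pi$-equations follow \emph{automatically} by induction on $n$ (apply $G'$ to the mesh relation~\eqref{eq:comm-AR}, use that $G'$ already fixes the $\iota$'s and, inductively, the lower $\pi$'s, then invoke the injectivity in Lemma~\ref{lem:yp}(b) to conclude $G'(\pi_i^{(n)})=\pi_i^{(n)}$). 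So what you flag as the ``principal obstacle'' dissolves. One organizational point: your base-case verification of $\epsilon_f=0$ for a degree-one $f$ between simples cannot be carried out at that stage, since the induced triangle has middle term of quasi-length two and the argument needs $G'$ to fix the degree-zero maps $f,g$ in that triangle; this check should be deferred to the end, once $G'$ is known to fix all irreducibles and hence all degree-zero morphisms --- exactly as the paper does in its step~(3).
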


\begin{proof}
  The proof is done in several steps.

  \noindent
  \textnormal{(1)}
  {\it $G$ is isomorphic to a functor $G'$ which also induces the
    identity in $\Tt/\rad^\infty_\Tt$ and additionally satisfies
  $G(\iota_i^{(n)})=\iota_i^{(n)}$ for any $n\geq 1$ and any
  $i\in\ZZ_p$.}

  By assumption we have $G(\iota_i^{(n)})=\iota_i^{(n)}+\xi_i^{(n)}$ for
  some $\xi_i^{(n)}\in \Ext_\Her^1(X_i^{(n)},\tau^- X_i^{(n+1)})$. We set
  $\eta_i^{(1)}=0$ and define inductively, using Lemma~\ref{lem:yp} (a),
  elements
  $\eta_i^{(n+1)}\in \Ext_\Her^1(X_i^{(n+1)},\tau^- X_i^{(n+1)})$ such that
  \begin{equation}
    \label{eq:def_f_inductyively}
    \eta_i^{(n+1)}\iota_i^{(n)}=\xi_i^{(n)}+\iota_i^{(n)}\eta_i^{(n)}.
  \end{equation}

  Next we define isomorphisms $\psi_i^{(n)}=1+\eta_i^{(n)}\colon
  X_i^{(n)}\rightarrow X_i^{(n)}$ yielding
  $G(\iota_i^{(n)})\psi_i^{(n)} = \psi_i^{(n+1)}\iota_i^{(n)}$.
  Therefore, setting $G'(X)=X$ for any object $X$ in $\Tt$ and
  $G'(f)=(\psi_{j}^{(n)})^{-1} \circ G(f)\circ \psi_i^{(m)}$ for any
  morphism $f\colon X_i^{(m)}\rightarrow X_j^{(n)}$ we obtain
  assertion (1).

  \noindent
  \textnormal{(2)}
  {\it The functor $G'$ also satisfies
    $G'(\pi_i^{(n)})=\pi_i^{(n)}$ for any $n\geq 1$ and any
    $i\in\ZZ_p$.}

  Recall that $G'(\pi_i^{(n)})=\pi_i^{(n)}+\xi_i^{(n)}$ for some
  $\xi_i^{(n)}\in \Ext_\Her^1(X_{i+1}^{(n+1)},\tau^- X_i^{(n)})$.  By
  induction on $n$ we shall show that $\xi_i^{(n)}=0$.  For $n=1$ it
  follows from $\pi_{i}^{(1)}\iota_{i+1}^{(1)}=0$ that
  $0=G'(\pi_{i}^{(1)}\iota_{i+1}^{(1)})=
  \left(\pi_{i}^{(1)}+\xi_{i}^{(1)}\right) \iota_{i+1}^{(1)}$. Hence
  by Lemma~\ref{lem:yp} (b), we obtain $\xi_{i}^{(1)}=0$.  Assuming
  inductively that $G'(\pi_i^{(n-1)})=\pi_i^{(n-1)}$ for any $i$, we
  can apply $G'$ to the identity~\eqref{eq:comm-AR} and obtain
  $G'(\pi_{i}^{(n)}\iota_{i+1}^{(n)})=\pi_{i}^{(n)}\iota_{i+1}^{(n)}$.
  Therefore
  $\pi_{i}^{(n)}\iota_{i+1}^{(n)}=G(\pi_{i}^{(n)})\iota_{i+1}^{(n)}=
  (\pi_{i}^{(n)}+\xi_i^{(n)})\iota_{i+1}^{(n)}$ which implies
  $\xi_i^{(n)}\iota_{i+1}^{(n)}=0$. By Lemma~\ref{lem:yp} (b) we get
  $\xi_i^{(n)}=0$.

  \noindent
  \textnormal{(3)}
  {\it The functor $G'$ is the identity functor.}

  Let $\eta\in\Ext^1_\Her(X_a^{(r)},X_b^{(s)})$ be given by the
  sequence $ 0 \rightarrow X_b^{(s)}\xrightarrow{f} E\xrightarrow{g}
  X_a^{(r)}\rightarrow 0$, which gives rise to the induced triangle
  $\Delta$ forming the upper row of the following diagram. Since $G$
  and therefore also $G'$ is exact on $\Delta$, $G'(\Delta)$ is a
  triangle again.  By axiom \cite[(TR3)]{Ve} there exists a morphism
  $\zeta\colon X_a^{(r)}\rightarrow X_a^{(r)}$ making the following
  diagram commutative.
  \begin{center}
    \begin{picture}(210,44)
      \put(20,5){
        \put(-20,0){\RVCenter{\small $G'(\Delta)\colon $}}
        \put(-20,35){\RVCenter{\small $\Delta\colon $}}
        \multiput(0,0)(0,35){2}{
          \put(0,-4){\HBCenter{\small $X_b^{(s)}$}}
          \put(70,-4){\HBCenter{\small $E$}}
          \put(140,-4){\HBCenter{\small $X_a^{(r)}$}}
          \put(210,-4){\HBCenter{\small $X_b^{(s)}[1]$}}
          \put(14,0){\vector(1,0){48}}
          \put(79,0){\vector(1,0){48}}
          \put(153,0){\vector(1,0){39}}
          }
        \multiput(0,0)(70,0){2}{
          \multiput(-1,8)(2,0){2}{\line(0,1){17}}
          }
        \multiput(209,8)(2,0){2}{\line(0,1){17}}
        \multiput(140,27)(0,-4){5}{\line(0,-12){2}}
        \put(140,14){\vector(0,-1){6}}
        \put(35,-3){\HTCenter{\small $f$}}
        \put(35,40){\HBCenter{\small $f$}}
        \put(105,-3){\HTCenter{\small $g$}}
        \put(105,40){\HBCenter{\small $g$}}
        \put(173,-2){\HTCenter{\small $G'(\eta)$}}
        \put(175,40){\HBCenter{\small $\eta$}}
        \put(143,17.5){\LVCenter{\small $\zeta$}}
}
\end{picture}
\end{center}
Write $\zeta=\zeta_0+\zeta_1$, the decomposition into
different degrees. Then $g=g_0=(\zeta g)_0=\zeta_0 g$ implies
$\zeta_0=1$ since $g$ is surjective. Therefore
it follows from $\eta=G'(\eta)\zeta$ that
$\eta=G'(\eta)+G'(\eta)\zeta_1=G'(\eta)$,
where the last equation follows from Lemma~\ref{lem:radical}  and
the fact that the composition of two morphisms of
degree one is zero. Hence the result.
\end{proof}


\section{Lifting of automorphisms}
\label{sec:lifting}

\subsection*{Non-zero prolongation}
We shall need the following results.

\begin{Lemma}
  \label{non-zero-composition-1}
  For any non-zero $\eta\in\Ext_\Her^1(X,\tau^- Y)$ there exists a morphism
  $h\in\Hom_\Her(\tau^- Y ,\tau X)$ such that $h\eta\neq0$.
\end{Lemma}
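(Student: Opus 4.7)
The plan is to deduce the claim directly from Serre duality. By Serre duality on $\Her$ we have natural isomorphisms
$$
\Ext^1_\Her(X,\tau^- Y)\;\cong\;\dual\Hom_\Her(\tau^- Y,\tau X)
\qquad\text{and}\qquad
\Ext^1_\Her(X,\tau X)\;\cong\;\dual\End_\Her(X),
$$
and under the first isomorphism the pairing between $\Ext^1_\Her(X,\tau^- Y)$ and $\Hom_\Her(\tau^- Y,\tau X)$ is the trace-Yoneda pairing $(\xi,h)\mapsto\operatorname{tr}(h\xi)$, where $h\xi\in\Ext^1_\Her(X,\tau X)\cong\dual\End_\Her(X)$ is evaluated at $1_X$.

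Concretely, I would proceed as follows. First, represent $\eta$ by a short exact sequence $0\to\tau^- Y\xra{u}E\xra{v}X\to 0$ and apply $\Hom_\Her(-,\tau X)$ to obtain the exact sequence
$$
\Hom_\Her(E,\tau X)\ra\Hom_\Her(\tau^- Y,\tau X)\xra{\eta\,\cdot\,}\Ext^1_\Her(X,\tau X)
$$
(the last map being Yoneda multiplication with $\eta$). Existence of the desired $h$ is equivalent to the non-vanishing of $\eta\cdot(-)$. Second, I would invoke naturality of Serre duality in the first variable to identify $\eta\cdot(-)$ with the $k$-linear dual of the map
$$
\End_\Her(X)\ra\Ext^1_\Her(X,\tau^- Y),\qquad g\mapsto \eta\circ g.
$$
Third, since this last map sends $1_X$ to $\eta\neq 0$, it is non-zero; hence so is its dual, which produces an $h\in\Hom_\Her(\tau^- Y,\tau X)$ with $h\eta\neq 0$.

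The only delicate point, and the step I expect to need the most care, is the identification of the Yoneda multiplication map with the Serre dual of right multiplication by $\eta$. This is a standard consequence of the naturality of the Serre duality isomorphism, and can alternatively be phrased purely in terms of the trace pairing: since the bilinear form $(h,\xi)\mapsto\operatorname{tr}(h\xi)$ is non-degenerate and $\eta\neq 0$, there must exist $h$ with $\operatorname{tr}(h\eta)\neq 0$, which forces $h\eta\neq 0$ in $\Ext^1_\Her(X,\tau X)$.
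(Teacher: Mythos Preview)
Your argument is correct, and the final paragraph is the cleanest formulation: the Serre duality isomorphism $\Ext^1_\Her(X,\tau^- Y)\cong\dual\Hom_\Her(\tau^- Y,\tau X)$ is realized by the perfect pairing $(\xi,h)\mapsto\operatorname{tr}(h\xi)$, so $\eta\neq 0$ immediately yields an $h$ with $h\eta\neq 0$. The long-exact-sequence detour in your steps one through three is not needed, and the identification in step two (while true) is the only place where one has to be a little careful with the bookkeeping; your trace-pairing argument bypasses it entirely.

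The paper proceeds differently: it exploits the almost-split sequence $\varepsilon_X\colon 0\to\tau X\to E\to X\to 0$. Since the sequence representing $\eta$ is non-split, its epimorphism factors through the right-almost-split map $E\to X$, and chasing the resulting diagram produces an explicit $h\in\Hom_\Her(\tau^- Y,\tau X)$ with $h\eta=\varepsilon_X\neq 0$. This is more concrete and yields extra information (the pushforward is the almost-split sequence itself), whereas your approach is shorter and more formal, using only the non-degeneracy of the Serre pairing rather than the existence of almost-split sequences. In the present setting both tools are available and essentially equivalent, so either proof is fine.
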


\begin{proof}
  Observe that $\eta\in\Ext_\Her^1(X,\tau^- Y)$ is given by a
  non-split short exact sequence $ \eta\colon 0\rightarrow \tau^-
  Y\xrightarrow{a} F \xrightarrow{b} X\rightarrow0$. Therefore $b$
  factors through $b'$ in the almost-split sequence $
  \varepsilon_X\colon 0\rightarrow \tau X\xrightarrow{a'}
  E\xrightarrow{b'} X\rightarrow0, $ that is, $b=b'c $ for some
  $c\in\Hom_\Her(F,E)$ and consequently $c a$ factors through $a'$,
  say $c a=a'h$ for some $h\in\Hom_\Her(\tau^-Y,\tau X)$.  Therefore
  $h \eta=\varepsilon_X$ is non-zero as an almost-split sequence.
\end{proof}

\begin{Lemma}\label{non-zero-composition-2}
  Assume $f\colon X\rightarrow Y$ is a non-zero morphism in $\Her_+$.
  Then there exists a morphism $h\colon Y\ra Z$ in $\Her_+$ such that
  $hf\neq0$ and $\Ext^1_\Her(X,\tau^- Z)=0=\Ext^1_\Her(Y,\tau^- Z)$.
\end{Lemma}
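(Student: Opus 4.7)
The plan is to \emph{prolongate} $f$ by tensoring with a line bundle of large degree having a non-zero section. On the weighted projective line $\XX$ there exist line bundles $L$ with $\Hom_\Her(\Oo,L)\neq 0$ and $\deg(L)$ arbitrarily large (for instance, $L=\Oo(n\vec{c})$ for $n$ sufficiently large). Fix such an $L$, a non-zero section $s\colon\Oo\to L$, and set
\[
Z:=Y\otimes L\in\Her_+,\qquad h:=1_Y\otimes s\colon Y\longrightarrow Z.
\]
Then $hf=f\otimes s$ is non-zero: at the generic point $\eta$ of $\XX$ it becomes the multiplication of the $K(\XX)$-linear map $f_\eta$ (non-zero because $X,Y$ are torsion-free and $f\neq 0$) by the non-zero scalar $s_\eta$.

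For the Ext-vanishing, Serre duality applied twice yields
\[
\Ext^1_\Her(X,\tau^-Z)\;\cong\;\dual\Hom_\Her(\tau^-Z,\tau X)\;\cong\;\dual\Hom_\Her(Z,\tau^2 X),
\]
and analogously with $X$ replaced by $Y$. Tensoring with $L$ shifts every Harder--Narasimhan slope by $\deg L$, so $\mu_{\min}(Z)=\mu_{\min}(Y)+\deg L$; for $\deg L$ large enough this exceeds both $\mu_{\max}(\tau^2 X)$ and $\mu_{\max}(\tau^2 Y)$. The standard slope-based vanishing for vector bundles on $\XX$, namely $\mu_{\min}(A)>\mu_{\max}(B)\Rightarrow\Hom_\Her(A,B)=0$, then forces both Hom spaces to vanish, delivering the required $\Ext^1_\Her(X,\tau^-Z)=0=\Ext^1_\Her(Y,\tau^-Z)$.

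The only obstacle is book-keeping: verifying that $\XX$ admits line bundles of arbitrarily large positive degree with non-zero sections, that $-\otimes L$ shifts slopes uniformly, and the $\Hom$-vanishing via HN-slopes for vector bundles---all standard from \cite{GeLe}. No additional machinery is needed.
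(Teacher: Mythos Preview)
Your proof is correct and follows essentially the same strategy as the paper's: push $Z$ to large slope by twisting with $\Oo(n\vec{c})$, then use Serre duality together with slope considerations to obtain the $\Ext^1$-vanishing. The only difference is that the paper first embeds $Y$ into a direct sum of line bundles $\bigoplus_i L_i$ (via \cite[Cor.~2.7]{GeLe}) before twisting, so that $Z=\bigoplus_i L_i(n\vec{c})$ and $h$ is manifestly a monomorphism, and then phrases the vanishing via line bundle filtrations of $X$ and $Y$ (citing \cite[(S15)]{LePe}) rather than Harder--Narasimhan slopes---but since line bundles are stable these amount to the same thing.
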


\begin{proof}
  By \cite[Cor.~2.7]{GeLe} there exists an embedding
  $Y\hookrightarrow\bigoplus_{i=1}^rL_i$ into a direct sum of line
  bundles $L_i$. For each integer $n\geq0$ we further obtain
  embeddings $L_i\ra L_i^{(n)}$ into line bundles $L_i^{(n)}$ with
  degree (slope) $\mu(L_i^{(n)})\geq \mu(L_i)+n$ (in the language of
  \cite{GeLe} we may take $L_i^{(n)}=L_i(n\vec{c})$). We thus obtain
  an embedding $h\colon Y\ra Z$ with $Z=\bigoplus_{i=1}^r L_i^{(n)}$.
  Clearly $hf\neq0$. By means of line bundle filtrations for $X$ and
  $Y$ the remaining assertions now follow as in \cite[(S15)]{LePe} if
  $n$ is sufficiently large.
\end{proof}

\subsection*{Autoequivalences of $\Clu$ fixing all objects}

The following two results are the key ingredients to determine the
automorphism group of $\Clu$.
\begin{Proposition}
  \label{prop:aut_fix_obj}
  We fix an admissible triangulated structure on $\Clu$.  If an
  autoequivalence $G$ of $\Clu$ sends induced triangles to exact
  triangles and fixes all objects then $G$ is isomorphic to the
  identity functor.
\end{Proposition}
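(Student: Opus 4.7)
I would construct a natural isomorphism $\psi\colon 1_\Clu\xrightarrow{\sim} G$ in two stages: first handle each cluster tube via Proposition~\ref{Prop:exc_tube_iso}, then extend to $\Clu_+$ using Proposition~\ref{prop:factor} together with the prolongation Lemmas~\ref{non-zero-composition-1} and~\ref{non-zero-composition-2}.

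For the first stage, fix a cluster tube $\Tt\subset\Clu_0$. Since $G$ fixes objects and sends induced triangles (in particular the AR triangles) to exact triangles, each irreducible morphism $\iota_i^{(n)}$ and $\pi_i^{(n)}$ is mapped by $G$ to an irreducible morphism between the same two indecomposables; modulo $\rad^2$ the action is therefore multiplication by a scalar in $k^\times$. The mesh relations~\eqref{eq:comm-AR} constrain but still allow a coherent choice of scalar automorphisms $c_X\cdot 1_X$ on the indecomposables of $\Tt$ that absorbs these scalars, so after replacing $G$ by its diagonal conjugate one may assume that $G|_\Tt$ induces the identity on $\Tt/\rad^\infty_\Tt$. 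Proposition~\ref{Prop:exc_tube_iso} then produces a natural isomorphism $\psi^\Tt\colon 1_\Tt\xrightarrow{\sim} G|_\Tt$ with components $1_X+\eta_X$, $\eta_X$ of degree one. Performing this on every cluster tube (which are pairwise orthogonal in $\Clu_0$) assembles into a natural isomorphism $\psi_0$ on $\Clu_0$; after composing $G$ with $\psi_0^{-1}$ on the tubes we may further assume that $G$ is the identity on $\Clu_0$.

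For the second stage I would set $\psi_X=1_X$ for each indecomposable $X\in\Clu_+$ and verify that the modified $G$ acts as the identity on all morphism spaces involving vector bundles. Given $f=f_0+f_1\in\Hom_\Clu(X,Y)$ with $X,Y\in\Clu_+$, the degree-one part $f_1\in\Ext^1_\Her(X,\tau^- Y)$ factors through a cluster tube by Proposition~\ref{prop:factor}, and since $G$ is now the identity on tubes this immediately gives $G(f_1)=f_1$. For the degree-zero part $f_0\in\Hom_\Her(X,Y)$, Lemma~\ref{non-zero-composition-2} supplies an embedding $h\colon Y\hookrightarrow Z$ into a sum of line bundles with the required Ext-vanishing and $hf_0\neq 0$; combined with Lemma~\ref{non-zero-composition-1} (to detect the appearance of a spurious degree-one component) and the rigidity coming from the tilting object $T$ with $\End_\Her(T)=A$, this lets one test $G(f_0)-f_0$ against enough morphisms to force it to vanish. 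The remaining naturality checks for mixed morphisms between $\Clu_+$ and $\Clu_0$ reduce to the same factorization and prolongation arguments. The main obstacle is precisely this second stage: one must verify simultaneously that the tube-level normalizations glue coherently (so that no conflicting twist arises when morphisms pass between different tubes or into $\Clu_+$) and that both the degree-zero and the degree-one parts of any potential discrepancy $G(f)-f$ on $\Hom_\Clu(\Clu_+,\Clu_+)$ are forced to be zero by the prolongation lemmas.
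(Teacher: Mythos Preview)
Your outline has the right architecture and you correctly isolate the key tools, but there is a genuine gap, and it is exactly the one you flag in your last sentence without resolving: the tube-by-tube scalar normalization need not glue with the choice $\psi_X=1_X$ on $\Clu_+$. Concretely, after you conjugate by the automorphisms $c_S(1+\eta_S)$ on tube objects $S$ and by $1_X$ on bundles $X$, the new functor $G'$ sends a degree-zero morphism $h\colon X\to S$ (bundle to tube) to $c_S^{-1}G(h)$; for $G'(h)=h$ you would need $G(h)=c_S\,h$, and nothing in your tube-internal argument controls $G$ on such morphisms. This also blocks your case (a): the fact that $\Hom_\Clu(\Clu_+,\Clu_0)$ has no degree-one part tells you $G(h)$ is of degree zero, but not that it equals $h$.

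The paper's proof fills exactly this hole with one global step that replaces the ad hoc scalar absorption. Since $G$ fixes objects, Proposition~\ref{prop:ideal} gives $G(\Ii)=\Ii$, so $G$ descends to an autoequivalence $\sigma$ of $\Her=\Clu/\Ii$ fixing all objects; by \cite[Prop.~2.1]{LeMe} one has $\varphi\colon\sigma\xrightarrow{\sim}1_\Her$. Conjugating $G$ by the degree-zero isomorphisms $\varphi_X$ produces a $G'$ with $G'(f)=f+\eta_f$ for every degree-zero $f$, \emph{uniformly} over all of $\Clu$. Now Proposition~\ref{Prop:exc_tube_iso} applies with no scalar correction (the induced action on $\Tt/\rad^\infty_\Tt$ is already the identity), and after adjusting on $\Clu_0$ the cases $\Clu_+\to\Clu_0$, $\Clu_0\to\Clu_+$, $\Clu_+\to\Clu_+$ go through via the arguments you sketch, because property~(ii) guarantees the degree-zero component is already fixed. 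The missing idea is thus the passage through the quotient $\Clu/\Ii=\Her$ and the invocation of the rigidity result for $\Aut(\Her)$ on objects; once you insert that, the rest of your plan works.
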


\begin{proof}
Let $\Ii$ be the two-sided ideal of morphisms of degree one of
$\Clu$. Since $G$ is the identity on objects it follows from the
description of $\Ii$ in Proposition~\ref{prop:ideal} that
$G(\Ii)=\Ii$. Hence $G$ induces an autoequivalence $\sigma$ of
$\Clu/\Ii=\Her$.

Observe that $G$ is isomorphic to an autoequivalence $G'$ which
satisfies the following two properties:
  \begin{itemize}
  \item[\textnormal{(i)}] $G'(X)=X$ for any object
  $X\in\Clu$,
  \item[\textnormal{(ii)}] for each morphism $f$ of degree zero there exists
    a morphism  $\eta_f$ of degree one such that $G'(f)=f+\eta_f$.
  \end{itemize}

Indeed, by \cite[Prop.~2.1]{LeMe}, the autoequivalence $\sigma$ is
  isomorphic to the identity functor, say $\varphi\colon
  \sigma\xrightarrow{\sim}
  1_\Her$.  Clearly, $\varphi_X$ is a degree zero isomorphism in
  $\Clu$ for each
  object $X\in\Clu$ and therefore we can define  $G'$ on morphisms by setting
  $G'(f)=\varphi_Y\circ G(f)\circ\varphi_X^{-1}$ for
  $f\colon X\rightarrow Y$.

By Proposition~\ref{Prop:exc_tube_iso} we get an isomorphism
$\psi\colon 1_{\Clu_0}\ra G'|_{\Clu_0}$, where the degree zero part of
$\psi_X$ is the identity on $X$. Changing $G'$ by means of $\psi$ we
may additionally assume that the restriction of $G'$ to $\Clu_0$ is
the identity functor. Due to the special form of $\psi$, see
Proposition~\ref{Prop:exc_tube_iso}, the functor $G'$ still keeps
property~(ii).

Simplifying notation we write $G$ instead of $G'$. We show now that
$G(f)=f$ for each $f\colon X\ra Y$ in the remaining cases: (a)
$X\in\Clu_+$, $Y\in\Clu_0$, (b) $X\in\Clu_0$, $Y\in\Clu_+$ and (c)
$X\in\Clu_+$, $Y\in\Clu_+$.

In case (a) this follows from the fact that there are no morphisms of
degree one from $\Clu_+$ to $\Clu_0$.

To prove case (b) we assume $G(f)\neq f$. Since there are no morphisms
of degree zero from $X$ to $Y$, Lemma~\ref{non-zero-composition-1}
yields a morphism $h\colon Y\ra \tau X$ such that $h(f-G(f))\neq0$.
Then $G(hf)=hf$ since $G$ is the identity on $\Clu_0$. On the other
hand $G(hf)=hG(f)$ by case (a), yielding the contradiction
$h(f-G(f))=0$.

Concerning (c) we first assume that $f$ has degree one. By
Proposition~\ref{prop:factor} we get a factorization $f=\eta h$ for
some $h\in\Hom_\Her(X,Z)$ and $\eta\in\Ext^1_\Her(Z,\tau^-Y)$ for some
$Z\in\Clu_0$. We thus get $G(f)=f$ by (a) and (b). Next we assume that
$f$ is a degree zero morphism with $f-G(f)\neq0$. By
Lemma~\ref{non-zero-composition-2} we find $Z\in \Clu_+$ and a
morphism $h\colon Y\ra Z$ such that $h(f-G(f))\neq0$ and
$\Hom_\Clu(X,Z)_1=0=\Hom_\Clu(Y,Z)_1$. Then again $G(hf)=hf$ and
$G(h)=h$, hence $h(f-G(f))=0$, a contradiction.  We have shown that
$G$ is the identity functor on $\Clu$.
\end{proof}

\subsection*{A lifting property}

We first observe that each exact autoequivalence $u$ of $\Der(\Her)$
induces canonically an autoequivalence $u_\Clu$ of $\Clu$ which makes
the diagram
\begin{center}
  \begin{picture}(100,48)
    \put(0,1){
    \multiput(0,5)(100,0){2}{\HBCenter{$\Clu$}}
    \multiput(0,35)(100,0){2}{\HBCenter{$\Der(\Her)$}}
    \multiput(0,31)(100,0){2}{\vector(0,-1){17}}
    \put(8,8){\vector(1,0){84}}
    \put(20,38){\vector(1,0){60}}
    \put(-3,23){\RVCenter{$\pi$}}
    \put(103,23){\LVCenter{$\pi$}}
    \put(50,5){\HTCenter{$u_\Clu$}}
    \put(50,40){\HBCenter{$u$}}
    }
  \end{picture}
\end{center}
commutative and sends induced triangles to induced triangles. The
setting induces a canonical homomorphism
$\pi_{\ast}\colon\Aut(\Der(\Clu))\ra \Aut(\Clu_{\ind})$, $u\mapsto
u_\Clu$, where $\Aut(\Clu_{\ind})$ denotes the semigroup of
isomorphism classes of autoequivalences of the category $\Clu$,
commuting with the translation [1] and preserving induced triangles.
It follows from the next proposition that $\Aut(\Clu_{\ind})$ actually
is a group, called the automorphism group of $\Clu_\ind$.

\begin{Theorem}\label{thm:lifting}
  Let $\Clu$ be the cluster category of $\Her$ equipped with an
  admissible triangulated structure. Then each autoequivalence $G$ of
  $\Clu$, sending induced triangles to exact triangles, lifts to an
  exact autoequivalence $u$ of the derived category $\Der(\Her)$, that
  is, $G$ is isomorphic to $u_\Clu$.  In particular, $G$ sends induced
  triangles to induced triangles.
\end{Theorem}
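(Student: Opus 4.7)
The plan is to reduce Theorem~\ref{thm:lifting} to Proposition~\ref{prop:aut_fix_obj} by constructing a lift of $G$ in stages. First I would arrange that $G$ preserves $\Clu_0$ setwise. Every autoequivalence of $\Clu$ permutes tubular families since ``maximal family of pairwise orthogonal cluster tubes'' is an intrinsic notion. In the domestic or wild case $\Clu_0$ is the only tubular family and so $G(\Clu_0)=\Clu_0$ automatically. In the tubular case $G$ sends $\Clu^{(\infty)}=\Clu_0$ to some $\Clu^{(q)}$; by \cite{LeMe2} the automorphism group of $\Der(\Her)$ acts transitively on the slopes in $\oQQ$, so there is an exact autoequivalence $v$ of $\Der(\Her)$ with $v_\Clu(\Clu^{(q)})=\Clu_0$. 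Replacing $G$ by $v_\Clu\circ G$, I may henceforth assume $G$ fixes $\Clu_0$ setwise, and consequently also $\Clu_+$.

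Next, the intrinsic description of the degree-one ideal $\Ii$ in Proposition~\ref{prop:ideal} shows that $G(\Ii)=\Ii$, so $G$ descends to an autoequivalence $\sigma$ of $\Clu/\Ii\simeq\Her$. Being an autoequivalence of an abelian category, $\sigma$ extends canonically to an exact autoequivalence $u$ of $\Der(\Her)$ by applying $\sigma$ termwise to complexes. The resulting $u_\Clu$ is an exact autoequivalence of $\Clu$ which preserves $\Ii$ (because $u$ commutes with $F=\tau^-\circ [1]$ up to natural isomorphism) and induces $\sigma$ on $\Her$.

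Finally I would compare $G$ with $u_\Clu$ through the composition $H=u_\Clu^{-1}\circ G$, which descends to the identity functor on $\Her$. By Proposition~\ref{prop:CandH} the isomorphism classes of objects in $\Clu$ and $\Her$ coincide, so $H(X)\cong X$ in $\Clu$ for every object $X$, and after replacement by an isomorphic functor $H$ fixes each object on the nose. Moreover $H$ sends induced triangles to exact triangles, since $G$ does by hypothesis and $u_\Clu^{\pm 1}$ is exact with respect to the admissible triangulated structure on $\Clu$. Proposition~\ref{prop:aut_fix_obj} then yields $H\simeq 1_\Clu$, whence $G\simeq u_\Clu$. The concluding assertion that $G$ sends induced triangles to induced ones follows at once, as $u_\Clu$ visibly does. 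The principal obstacle in this strategy is the first step in the tubular case: it rests essentially on the transitive action on the slope $\oQQ$ provided by \cite{LeMe2}, without which one cannot realign $G$ with the preferred reconstruction ideal $\Ii$ and the rest of the descent argument breaks down.
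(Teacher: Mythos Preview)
Your strategy is exactly the paper's: normalize so that $G(\Clu_0)=\Clu_0$ (trivially for $\chi_\Her\neq 0$, via transitivity on slopes for $\chi_\Her=0$), use Proposition~\ref{prop:ideal} to get $G(\Ii)=\Ii$ and hence an induced $\sigma\in\Aut(\Her)$, lift $\sigma$ to $u=\Der(\sigma)$, and then finish with Proposition~\ref{prop:aut_fix_obj}.

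There is, however, one genuine slip in the final step. You set $H=u_\Clu^{-1}\circ G$ and justify that $H$ sends induced triangles to exact ones by asserting that ``$u_\Clu^{\pm 1}$ is exact with respect to the admissible triangulated structure on $\Clu$''. At this stage that is not available: all that has been established is that $u_\Clu$ (hence $u_\Clu^{-1}$) sends \emph{induced} triangles to \emph{induced} triangles, since $u_\Clu\circ\pi=\pi\circ u$ with $u$ exact on $\Der(\Her)$. Full exactness of $u_\Clu$ for an arbitrary admissible triangulated structure is precisely what the paper proves later (and only for Keller's structure), using Theorem~\ref{thm:lifting} itself; invoking it here would be circular. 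With your order of composition, an induced $\Delta$ gives $G(\Delta)$ merely exact, and you cannot conclude that $u_\Clu^{-1}(G(\Delta))$ is exact.

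The fix is to reverse the order and take $H=G\circ u_\Clu^{-1}$, as the paper does: then for induced $\Delta$ one has $u_\Clu^{-1}(\Delta)$ induced, hence $G(u_\Clu^{-1}(\Delta))$ exact by hypothesis on $G$. This $H$ still induces the identity on $\Her=\Clu/\Ii$ and so fixes objects (after the obvious adjustment), and Proposition~\ref{prop:aut_fix_obj} applies. With that swap your argument is correct and coincides with the paper's.
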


\begin{proof}
  We recall that $\Clu$ has a unique tubular family if the
  Euler characteristic of $\Her$ is non-zero and therefore
  $G(\Clu_0)=\Clu_0$. In the tubular case we can assume the same,
  changing $G$ by an automorphism of $\Der(\Her)$, if necessary: Note
  that the automorphism group of $\Der(\Her)$ acts transitively on the
  set of tubular families of $\Clu$ by means of $\pi_*$. Since
  $G^{-1}(\Clu_0)=\Clu^{(q)}$ for some $q$, we find an automorphism
  $u$ of $\Der(\Her)$ with $u_\Clu(\Clu_0)=\Clu^{(q)}$, and consider
  $Gu_\Clu$ instead of $G$.

The property $G(\Clu_0)=\Clu_0$ now implies that $G(\Ii)=\Ii$ for the
ideal $\Ii$ of degree one morphisms of $\Clu$. Consequently $G$
induces an autoequivalence $\gamma$ of $\Her=\Clu/\Ii$. We then extend
$\gamma$ to the exact autoequivalence $u=\Der(\gamma)$ of $\Der(\Her)$
and consider $u_\Clu$. By construction $G\,u_\Clu^{-1}$ is exact on
induced triangles and fixes the objects of $\Clu$, hence $G$ is
isomorphic to $u_\Clu$ by Proposition~\ref{prop:aut_fix_obj}, proving
the claim.
\end{proof}

\begin{Corollary} \label{cor:surjective}
The homomorphism $\pi_*\colon \Aut(\Der(\Her))\ra \Aut(\Clu_\ind)$ is
surjective.
\end{Corollary}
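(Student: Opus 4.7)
The plan is to derive this corollary as an essentially immediate consequence of Theorem~\ref{thm:lifting}, after unwinding the definition of $\Aut(\Clu_{\ind})$. So the first step will be to take an arbitrary element $G\in\Aut(\Clu_{\ind})$. By the definition given in the paragraph preceding Theorem~\ref{thm:lifting}, such a $G$ is an autoequivalence of $\Clu$ that commutes with the shift~$[1]$ (which in $\Clu$ is $\tau$) and preserves induced triangles, i.e.\ sends induced triangles to induced triangles. In particular, $G$ sends induced triangles to exact triangles, which is the hypothesis required by Theorem~\ref{thm:lifting}.

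The second step will be to invoke Theorem~\ref{thm:lifting} directly: it produces an exact autoequivalence $u$ of $\Der(\Her)$ together with an isomorphism $G\simeq u_{\Clu}$. By construction $u_{\Clu}=\pi_{*}(u)$, so the isomorphism class of $G$ lies in the image of $\pi_{*}$. Since $G$ was arbitrary, $\pi_{*}$ is surjective. As a by-product, one recovers the remark made right before the theorem that $\Aut(\Clu_{\ind})$ is actually a group, because every element admits a two-sided inverse coming from $u^{-1}$.

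I do not expect any genuine obstacle, since the entire content has already been absorbed into Theorem~\ref{thm:lifting}; the only thing worth double-checking is bookkeeping, namely that the lift $u$ produced by the theorem is really a preimage of $G$ under $\pi_{*}$ and not merely a preimage up to some additional twist, and that the well-definedness of $\pi_{*}$ on isomorphism classes (already observed in the paragraph introducing $\pi_{*}$) makes the statement $\pi_{*}(u)=[u_{\Clu}]=[G]$ meaningful. Both points are handled by the commutative square defining $u_{\Clu}$ and by the fact that $G\simeq u_{\Clu}$ is an isomorphism of functors preserving the relevant structure.
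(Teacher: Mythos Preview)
Your proposal is correct and matches the paper's approach: the corollary is stated there without proof, as an immediate consequence of Theorem~\ref{thm:lifting}, exactly along the lines you describe. There is nothing to add.
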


By $\Clu_\kel$ we denote the cluster category $\Clu$, equipped with
the triangulated structure defined in \cite{Ke}. We are going to show
that exact autoequivalences of $\Der(\Her)$ induce exact
autoequivalences of the category $\Clu_\kel$.

\begin{Lemma}\label{lem:standard} Each
  exact autoequivalence $G$ of $\Der(\Her)=\Der(A)$, $A$ canonical, is
  standard in the sense of~\cite[Def.~3.4]{Ric2}.
\end{Lemma}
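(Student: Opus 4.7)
The plan is to combine the existence of a tilting object in $\Her$ with Rickard's Morita theorem for derived equivalences. Since $\Her$ carries a tilting object $T$ with $\End_\Her(T)=A$, the functor $R\Hom_\Her(T,-)$ yields a triangle-equivalence $\Der(\Her)\xrightarrow{\sim}\Der(A)$, so I may view $G$ as an exact autoequivalence of $\Der(A)$. The image $T':=G(T)$ is then a tilting complex in $\Der(A)$ whose endomorphism algebra is canonically identified with $A$ via $G$. Rickard's theorem~\cite{Ric2} produces a two-sided tilting complex $X$ of $A$-$A$-bimodules giving rise to a standard equivalence $F:=-\otimes^L_A X$ with $F(A)\cong T'$.

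Next I would consider the composite $H:=F^{-1}\circ G$. By construction $H(T)\cong T$, and the induced action on $\End_{\Der(A)}(T)=A$ is a $k$-algebra automorphism $\sigma\in\Aut(A)$, which is itself realized by the standard equivalence $-\otimes_A A_{\sigma}$. Replacing $G$ by its composition with the inverse of this twist, I may assume that $H$ fixes $T$ and induces the identity on $\End(T)$. Because compositions and inverses of standard equivalences are standard, it suffices at this point to prove that such an $H$ is isomorphic to the identity functor.

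The last step is to produce a natural isomorphism $H\simeq\mathrm{id}$ from the chosen isomorphism $\varphi\colon T\xrightarrow{\sim}H(T)$ that is compatible with endomorphisms of $T$. The point is that $T$ is a classical generator of $\Der(\Her)=\Der(A)$: every object is isomorphic to a finite iterated cone of shifts of summands of $T$, so naturality of $\varphi$ with respect to morphisms in $\End(T)$ together with exactness of $H$ propagates $\varphi$ along distinguished triangles to a natural isomorphism $\mathrm{id}\xrightarrow{\sim}H$. This yields $G\simeq F\circ(-\otimes_A A_\sigma)$, a composition of standard equivalences. I expect the main obstacle to be this last propagation step: producing from a single isomorphism on $T$ a genuine natural isomorphism of triangulated functors requires careful bookkeeping of compatibility along the cones used to build arbitrary objects out of $T$.
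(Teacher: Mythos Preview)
Your reduction to an autoequivalence $H$ fixing $T$ and acting as the identity on $\End(T)=A$ is sound and mirrors the first half of the paper's argument. The paper proceeds differently from there, however: invoking~\cite[Cor.~3.5]{Ric2} it obtains a standard autoequivalence $G'$ with $G'(X)\simeq G(X)$ for \emph{every} object $X$, and then cites~\cite[Prop.~2.1]{LeMe} --- a rigidity result specific to $\Der(\coh(\XX))$, used repeatedly elsewhere in the paper --- to conclude $G\simeq G'$ as functors.

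You are right to flag the final propagation step as the obstacle; it is in fact a genuine gap rather than mere bookkeeping. Extending an isomorphism on a generator to a natural isomorphism of triangulated functors is obstructed by the non-functoriality of cones: given a triangle built from summands of $T$, axiom (TR3) supplies \emph{some} isomorphism on the cone compatible with the given ones, but neither uniqueness nor naturality with respect to further morphisms follows. Concretely, naturality of your $\varphi$ with respect to maps in $\md(A)$ does not force naturality with respect to higher Ext classes, which is what one needs for objects of amplitude at least two. Indeed, the assertion that every exact autoequivalence of $\Der(A)$ is standard is not known for arbitrary finite-dimensional algebras $A$, so an argument using only general triangulated-category reasoning cannot close this gap. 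The paper's appeal to~\cite[Prop.~2.1]{LeMe} is precisely the category-specific input that replaces your propagation step.
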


\begin{proof}
  It follows from~\cite[Cor.~3.5]{Ric2} that there is a standard
  autoequivalence $G'$ such that $G(X)\simeq G'(X)$ for all objects $X$
  in $\Der(A)$. From~\cite[Prop.~2.1]{LeMe} we get $G\simeq G'$,
  and hence $G$ is standard itself.
\end{proof}

\begin{Proposition}
  An autoequivalence of $\Clu$ is exact in $\Clu_\kel$ if and only if
  it is exact on induced triangles. Moreover,
  $\Aut(\Clu_\kel)=\Aut(\Clu_\ind)$.
\end{Proposition}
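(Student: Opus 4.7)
The strategy is to use Theorem~\ref{thm:lifting} together with Keller's explicit construction of the triangulated structure on $\Clu_\kel$, which is built so that the projection $\pi \colon \Der(\Her) \to \Clu_\kel$ is an exact functor. The plan is to chase both implications through this setup and then read off the equality of automorphism groups as a formal consequence.

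For the easy direction, suppose $G$ is exact as an autoequivalence of $\Clu_\kel$. Since by construction $\pi$ is exact, every induced triangle (being the image under $\pi$ of an exact triangle in $\Der(\Her)$) is an exact triangle in $\Clu_\kel$, so $G$ sends it to an exact triangle, i.e.\ $G$ is exact on induced triangles.

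For the converse, assume $G$ is exact on induced triangles. By Theorem~\ref{thm:lifting} there is an exact autoequivalence $u$ of $\Der(\Her)$ with $G \cong u_\Clu$. By Lemma~\ref{lem:standard}, $u$ is standard in the sense of Rickard, hence is given by tensoring with a two-sided tilting complex. Keller's construction of $\Clu_\kel$ is natural with respect to such standard autoequivalences: they descend to exact autoequivalences of the orbit category with its Keller triangulation, since the construction only uses tensor products and cones, and a standard equivalence preserves these. Thus $u_\Clu$, and therefore $G$, is exact as an autoequivalence of $\Clu_\kel$. This gives the main obstacle of the proof, namely verifying that standardness of $u$ is exactly what is needed to push $u$ through Keller's construction; once this is established, everything else is formal.

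From the equivalence just proved, the two notions of ``automorphism'' of $\Clu$ coincide: an autoequivalence commuting with the shift belongs to $\Aut(\Clu_\kel)$ (i.e.\ is exact in the Keller triangulation) if and only if it belongs to $\Aut(\Clu_\ind)$ (i.e.\ is exact on induced triangles). Hence $\Aut(\Clu_\kel) = \Aut(\Clu_\ind)$, completing the proof.
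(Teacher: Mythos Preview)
Your proposal is correct and follows essentially the same approach as the paper: use Theorem~\ref{thm:lifting} to lift $G$ to an exact autoequivalence $u$ of $\Der(\Her)$, invoke Lemma~\ref{lem:standard} to get that $u$ is standard, and then appeal to Keller's work to conclude that $u_\Clu$ is exact in $\Clu_\kel$. The paper is slightly more precise at one point: it explicitly notes that ``standard in the sense of Rickard'' implies ``standard in the sense of Keller'' (citing \cite[9.8]{Ke}) before invoking Keller's result \cite[9.4]{Ke}, whereas you absorb this into a somewhat informal remark about Keller's construction using only tensor products and cones; but the logical skeleton is identical.
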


\begin{proof}
  We first note that each autoequivalence of $\Der(A)$ which is
  standard in the sense of~\cite[Def.~3.4]{Ric2} is also standard in
  the sense of~\cite[9.8]{Ke}. By Lemma~\ref{lem:standard} Keller's
  result \cite[9.4]{Ke} implies that each exact autoequivalence of
  $\Der(\Her)$ induces an exact autoequivalence of $\Clu_\kel$. Both
  assertions now follow from Theorem~\ref{thm:lifting}.
\end{proof}

\begin{Remark}
  Each autoequivalence $G$ of $\Clu$ which is exact with respect to an
  admissible triangulated structure belongs to
  $\Aut(\Clu_\ind)=\Aut(\Clu_\kel)$. It is not clear whether the
  converse inclusion also holds.
\end{Remark}

\section{The automorphism group of the cluster category}
\label{sec:automorphisms}

We now continue our study of the automorphism group $\Aut(\Clu)$ where
the cluster category $\Clu=\Clu(\Her)$ is equipped with the
triangulated structure due to Keller~\cite{Ke}. Note that $F$
  defines a central element in $\Aut(\Der(\Her))$.

 \begin{Theorem}\label{Thm:Aut1}
The automorphism group $\Aut(\Clu)$ is
canonically isomorphic to the quotient $\Aut(\Der(\Her))/\spitz{F}$.
\end{Theorem}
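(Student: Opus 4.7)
The map in question is $\pi_*\colon \Aut(\Der(\Her)) \to \Aut(\Clu)$, $u \mapsto u_\Clu$. Surjectivity is Corollary~\ref{cor:surjective}. Both $\tau$ and $[1]$ are central in $\Aut(\Der(\Her))$, so $F$ is central and $\spitz F$ is a normal subgroup; moreover the identity morphism $1_X \in \Hom_{\Der(\Her)}(X,X)$, viewed as an element of $\Hom_\Clu(X, FX)$, furnishes a canonical natural isomorphism $1_\Clu \simeq F_\Clu$, so $\pi_*$ factors through the quotient $\Aut(\Der(\Her))/\spitz F$. It therefore remains to show $\ker \pi_* \subseteq \spitz F$.

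Take $u \in \ker \pi_*$. For each indecomposable $X$ of $\Der(\Her)$, the isomorphism $u_\Clu(X) \simeq X$ in $\Clu$, together with Proposition~\ref{prop:CandH} (which identifies indecomposables of $\Clu$ with $F$-orbits of indecomposables of $\Der(\Her)$), forces $u(X) \simeq F^{n(X)} X$ in $\Der(\Her)$ for a unique integer $n(X)$. The plan is to prove that $n(X)$ is a constant $n$. Once this is established, $F^{-n} u$ fixes every indecomposable of $\Der(\Her)$, hence restricts to an autoequivalence of the heart $\Her$ fixing all indecomposables; by \cite[Prop.~2.1]{LeMe} this restriction is isomorphic to $1_\Her$, and by Lemma~\ref{lem:standard} the standard autoequivalence $F^{-n}u$ of $\Der(\Her)$ is then isomorphic to $1_{\Der(\Her)}$, yielding $u \simeq F^n$ as required.

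To show that $n$ is constant I would first propagate along the AR quiver of $\Der(\Her)$. If $X \to Y$ is an AR-irreducible morphism then so is $u(X) \to u(Y)$, i.e., $F^{n(X)}X \to F^{n(Y)}Y$ is AR-irreducible; equivalently, $F^{n(Y)-n(X)}Y$ is an AR-neighbor of $X$. By Proposition~\ref{prop:AR-structure} no two distinct members of a single $F$-orbit can both be AR-neighbors of $X$ (otherwise they would collapse to a single vertex of the AR quiver of $\Clu$, contradicting the coincidence of the AR quivers of $\Her$ and $\Clu$). Hence $n(X) = n(Y)$, so $n$ is constant on each connected component of the AR quiver. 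For indecomposables $X$ and $Y$ lying in different $F$-orbits of components, I would use a non-zero morphism $f\colon X \to Y$ in $\Der(\Her)$, existing by connectedness of $\Her$: applying $u$ and $u^{-1}$ in turn yields $\Hom_{\Der(\Her)}(X, F^{\pm(n(Y)-n(X))} Y) \neq 0$, and the hereditary vanishing $\Hom_{\Der(\Her)}(X, F^k Y) = 0$ for $|k| \geq 2$ sandwiches $n(Y) - n(X) = 0$.

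The main obstacle is the AR-quiver propagation step, particularly in the tubular case where $F$ can stabilize a tube setwise. There one must rule out that two AR-neighbors of a vertex lie in the same $F$-orbit despite $F$'s nontrivial action; this is precisely what the fine analysis of cluster tubes carried out in Section~\ref{sec:cluster_tubes} (especially Proposition~\ref{Prop:exc_tube_iso}) and the slope considerations of Theorem~\ref{Thm:Factorization} are designed to secure.
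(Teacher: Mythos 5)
Your skeleton (surjectivity via Corollary~\ref{cor:surjective}, then proving the exponent $n(X)$ constant and concluding with \cite[Prop.~2.1]{LeMe}) is a reasonable alternative to the paper's proof, which pins down $n$ differently: there one observes that $u(\Her_0)$ is a tubular family of $\Der(\Her)$, hence equals $\Her_0[n]$ for a single $n$, and then gets $u(\Her)=\Her[n]$ from results of \cite{LeMe}. However, your constancy argument has a genuine gap at the cross-component step. Connectedness of $\Her$ does \emph{not} provide a non-zero morphism between two arbitrary indecomposables, not even in $\Der(\Her)$: objects in two distinct tubes of $\Her_0$ satisfy $\Hom_{\Der(\Her)}(X,Y[k])=0$ for all $k$, since distinct tubes are both Hom- and Ext-orthogonal. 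What connectedness yields is a zigzag of non-zero morphisms between indecomposables of $\Her$ (for instance through line bundles mapping onto torsion objects), and the comparison must be made edge by edge along such a zigzag. Moreover the vanishing you invoke, $\Hom_{\Der(\Her)}(X,F^kY)=0$ for $|k|\ge 2$, only sandwiches $|n(Y)-n(X)|\le 1$; what you need, and what is true for $X,Y\in\Her$, is $\Hom_{\Der(\Her)}(X,F^kY)=\Ext^k_\Her(X,\tau^{-k}Y)=0$ for all $k\notin\{0,1\}$, whence applying $u$ and $u^{-1}$ to a non-zero degree-zero morphism gives $n(Y)-n(X)\in\{0,1\}\cap\{-1,0\}$, that is $n(X)=n(Y)$.

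The second problem is that you declare the AR-propagation step in the tubular case to be the main obstacle and defer it to Proposition~\ref{Prop:exc_tube_iso} and Theorem~\ref{Thm:Factorization}; these results concern the cluster category $\Clu$, where the $F$-orbits are already collapsed, and do not bear on your argument, which takes place in $\Der(\Her)$. In fact the obstacle is spurious: $F=\tau^-[1]$ sends $\Her[m]$ to $\Her[m+1]$, so distinct members of an $F$-orbit lie in distinct shifts, hence in distinct AR components of $\Der(\Her)$ (each component being a shifted component of $\Her$), while all AR-neighbours of a vertex lie in its own component. This settles the propagation step with no input from Section~\ref{sec:cluster_tubes}. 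Finally, your last step is smoother if, as in the paper, you apply \cite[Prop.~2.1]{LeMe} directly to the exact autoequivalence $F^{-n}u$ of $\Der(\Her)$, which fixes all objects, rather than passing through the heart and Lemma~\ref{lem:standard}. With these repairs your more elementary route does prove the theorem.
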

\begin{proof}
  The homomorphism $\pi_*\colon \Aut(\Der(\Her))\rightarrow
  \Aut(\Clu)$, $u\mapsto u_\Clu$ is surjective by
  Corollary~\ref{cor:surjective}.  Clearly $F$ lies in the kernel of
  $\pi_*$. Suppose now that $u\in\Aut(\Der(\Her))$ satisfies
  $u_\Clu=1$.  Then, for any indecomposable object $X\in\Der(\Her)$,
  we must have $u(X)\simeq F^{n_X}(X)$ for some integer $n_X$. In
  particular, $u(X)$ belongs to $\Her_0[n_X]$ for each indecomposable
  $X$ from $\Her_0$. Since $u(\Her_0)$ is a tubular family in
  $\Der(\Her)$ it then follows that $u(\Her_0)=\Her_0[n]$ for a fixed
  integer $n$. By~\cite[Prop.~4.2 and 6.2]{LeMe} this in turn implies
  that $u(\Her)=\Her[n]$. Hence $u$ is isomorphic to $F^n$ on objects
  of $\Der(\Her)$, implying that $u$ is isomorphic to $F^n$ as a
  functor by \cite[Prop.~2.1]{LeMe}.
\end{proof}

\subsection*{The tubular situation}

We assume that $\Her$ is tubular and first give an invariant
description of the set $\oQQ$ of slopes for $\Her$. We define $\WW$ as
the set of all rank one direct summands of the free abelian group
$R=\rad\Groth(\Her)$ of rank two, where $R$ consists of all elements
of the Grothendieck group fixed under the automorphism induced by
$\tau$. We consider $R$ to be equipped with the Euler form.

Let $p$ be the least common multiple of the weights of $\Her$. Fixing
$w$ in $\WW$ the (additive closure of the) subcategory consisting of
all indecomposable objects $X=\pi(Y)$ of $\Clu$ such that
$\sum_{j=1}^p[\tau^jY]$ belongs to $w$ is a tubular family
$\Clu^{(w)}$ in $\Clu$ and each tubular family in $\Clu$ has this
form. This follows from~\cite[Thm.~4.6]{LeMe2}. In this way we get a
bijection $\WW\xrightarrow{\sim}\oQQ$. From \cite{LeMe} we know that
the group $M$ of $\ZZ$-linear automorphism of $R$ preserving the Euler
form is isomorphic to $\SL_2(\ZZ)$. We consider $M/\{\pm1\}$ as the
automorphism group of $\WW$ and hence $\Aut(\WW)\simeq \PSL_2(\ZZ)$.
We call $\WW$ the \emph{rational circle}.

\begin{Lemma}\label{lem:circle}
  There is a natural surjective homomorphism $\mu\colon\Aut(\Clu)\ra
  \Aut(\WW)$ such that $\rho=\mu(G)$ satisfies
  $G(\Clu^{(w)})=\Clu^{(\rho(w))}$ for each automorphism $G$ of
  $\Clu$.
\end{Lemma}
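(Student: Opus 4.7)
The plan is to use Theorem~\ref{Thm:Aut1} to lift $G\in\Aut(\Clu)$ to an exact autoequivalence $u\in\Aut(\Der(\Her))$, then read off an automorphism of $\WW$ from the induced action on the Grothendieck group. First I would observe that any exact autoequivalence $u$ induces an automorphism $u_*$ of $\Groth(\Der(\Her))=\Groth(\Her)$ preserving the Euler form. Since exact autoequivalences commute with the Serre functor and with $[1]$, they commute with $\tau$ up to isomorphism, so $u_*$ preserves the subgroup $R$ of $\tau$-invariants. The restriction $u_*|_R$ then lies in the group $M$ of Euler-form-preserving $\ZZ$-linear automorphisms of $R$, hence permutes rank one direct summands, giving $\rho\in\Aut(\WW)$. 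I would set $\mu(G)=\rho$.

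For well-definedness I would note that two lifts of $G$ differ by a power of $F=\tau^{-}\circ[1]$. On $R$ the autoequivalence $\tau^{-}$ acts as the identity (by definition of $R$) while $[1]$ acts as $-1$, so $F$ acts as $-1$ on $R$. Since each rank one direct summand is stable under multiplication by $-1$, $F$ acts trivially on $\WW$, and $\mu$ descends to a well-defined homomorphism $\Aut(\Clu)=\Aut(\Der(\Her))/\spitz{F}\ra\Aut(\WW)$. The claimed compatibility $G(\Clu^{(w)})=\Clu^{(\rho(w))}$ would then follow directly from the description preceding the lemma: for an indecomposable $X=\pi(Y)$ in $\Clu^{(w)}$ we have $\sum_{j=1}^{p}[\tau^{j}Y]\in w$, so since $u$ commutes with $\tau$, the corresponding sum for $u(Y)$ lies in $u_*(w)=\rho(w)$; applying the same argument to $G^{-1}$ yields equality.

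For surjectivity I would invoke the description of $\Aut(\Der(\Her))$ in the tubular case from~\cite{LeMe}: the image of $\Aut(\Der(\Her))$ in $M\cong\SL_2(\ZZ)$ contains generators of $\SL_2(\ZZ)$, realised by concrete autoequivalences such as tubular mutations together with the shift. Composing with the surjection $\pi_*$ from Corollary~\ref{cor:surjective} then yields surjectivity of $\mu$ onto $\Aut(\WW)\simeq\PSL_2(\ZZ)$. The main obstacle is this last step: the existence of enough autoequivalences of $\Der(\Her)$ whose action on $R$ exhausts $\PSL_2(\ZZ)$ is not formal and must be imported from~\cite{LeMe}, where care is taken that each such autoequivalence commutes with $\tau$ and preserves the Euler form. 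The first two steps are bookkeeping once Theorem~\ref{Thm:Aut1} is in place.
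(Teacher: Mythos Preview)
Your proposal is correct and follows essentially the same route as the paper's proof: lift $G$ to $u\in\Aut(\Der(\Her))$, pass to the induced action on $\Groth(\Her)$ and hence on $R$ and $\WW$, check well-definedness by computing that $F$ acts trivially on $\WW$, and quote~\cite{LeMe} for surjectivity. You supply more detail than the paper does (the Serre-functor argument for $\tau$-equivariance of $u_*$, the explicit computation that $F$ acts as $-1$ on $R$, and the verification of the compatibility formula), but the architecture is identical.
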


\begin{proof}
By Corollary~\ref{cor:surjective} an automorphism $G$ of $\Clu$ lifts
to an automorphism $u$ of $\Der(\Her)$. Since $u$ induces an
automorphism of the Grothendieck group, preserving the Euler form, it
induces an automorphism $u_\WW$ of $\WW$. Note that the automorphism
$F=\tau^{-}[1]$ induces the identity on $\WW$. By
Theorem~\ref{Thm:Aut1} two liftings of $G$ differ by a power of $F$,
hence induce the same element $G_\WW=u_\WW$ of $\Aut(\WW)$. This
defines the homomorphism $\mu$ satisfying the above formula. By
\cite[Thm.~6.3]{LeMe} surjectivity of $\mu$ follows.
\end{proof}

\begin{Remark}
  For the preceding arguments it is essential to relate the cluster
  category with the Grothendieck group of the derived category
  $\Der(\Her)$ by means of a lifting property. The Grothendieck group
  $\Groth(\Clu)$ of the cluster category, investigated in~\cite{BKL},
  cannot be used for the present purpose since essential information
  on the radical is lost under the canonical projection
  $\Groth(\Her)\ra \Groth(\Clu)$.
\end{Remark}

We recall from~\cite{LeMe} that the automorphism group $\Aut(\XX)$ of
a weighted projective line $\XX$ consists of all members of
$\Aut(\Her)$ fixing the structure sheaf. This group is finite if $\XX$
has at least three weights, in particular if $\Her$ is tubular.
Further the subgroup $\Pic_0(\XX)$ of
$\Aut(\Her)$ consists of all shift functors $E\mapsto E(\vec{x})$ of
$\Her$ which preserve slopes. This group is always finite abelian.

\begin{Proposition} \label{prop:tub_aut}
Assume $\Her$ is tubular. Then there is an exact sequence
  $$1\rightarrow \Pic_0(\XX)\rtimes
  \Aut(\XX)\rightarrow\Aut(\Clu)\xrightarrow{\mu} \Aut(\WW)\rightarrow 1.
  $$
\end{Proposition}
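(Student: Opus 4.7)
The plan is to verify exactness at each of the three positions of the sequence. Surjectivity of $\mu$ has already been established in Lemma~\ref{lem:circle}. The first map will be the composition of the natural inclusion $\Pic_0(\XX) \rtimes \Aut(\XX) \hookrightarrow \Aut(\Her) \subset \Aut(\Der(\Her))$ with the projection $\pi_*$. For injectivity I would invoke Theorem~\ref{Thm:Aut1}: the kernel of $\pi_*$ is $\spitz{F}$ with $F = \tau^-[1]$, and since $F^n = \tau^{-n}[n]$ sends $\Her$ to $\Her[n]$, no non-trivial power of $F$ preserves $\Her$, so $\spitz{F} \cap \Aut(\Her) = 1$.

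Next I would verify that the image lies inside $\ker(\mu)$. Every $\sigma \in \Aut(\XX)$ fixes the structure sheaf, hence preserves rank and degree, and thus slopes; every element of $\Pic_0(\XX)$ preserves slopes by definition. So each element of $\Pic_0(\XX) \rtimes \Aut(\XX)$ fixes every $\Her^{(q)}$, hence every $\Clu^{(q)}$, hence acts trivially on $\WW$.

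For the reverse containment I would take $G \in \ker(\mu)$ and lift it to some $u \in \Aut(\Der(\Her))$ via Corollary~\ref{cor:surjective}. Since $u_\WW = \mathrm{id}$ and the $\WW$-class of a shifted tubular family $\Her^{(q)}[n]$ is independent of $n$, necessarily $u(\Her^{(q)}) = \Her^{(q)}[n_q]$ for some integer $n_q$. To see that $n_q$ is constant in $q$, I would pick $q_1 < q_2$ and a non-zero morphism $X \to Y$ with $X \in \Her^{(q_1)}$, $Y \in \Her^{(q_2)}$; applying $u$ yields a non-zero morphism between $\Her^{(q_1)}[n_{q_1}]$ and $\Her^{(q_2)}[n_{q_2}]$ in $\Der(\Her)$, and since $\Hom_\Her(\Her^{(q_1)}, \Her^{(q_2)}) \neq 0$ while $\Ext^1_\Her(\Her^{(q_1)}, \Her^{(q_2)}) = 0$, the shift $n_{q_2} - n_{q_1}$ must be zero. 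So $n_q = n$ is constant, and by \cite[Prop.~4.2 and 6.2]{LeMe} this forces $u(\Her) = \Her[n]$. Replacing $u$ by $F^{-n}u$ (same class in $\Aut(\Clu)$) brings $u$ into $\Aut(\Her)$ while preserving every slope. Invoking the description $\Aut(\Her) = \Pic(\XX) \rtimes \Aut(\XX)$ from \cite{LeMe}, slope preservation forces the Picard component into $\Pic_0(\XX)$, placing $u$ in $\Pic_0(\XX) \rtimes \Aut(\XX)$.

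The main obstacle I foresee is the uniformity of $n_q$: the Ext-vanishing between tubular families of distinct slopes is familiar inside $\Her$, but one must check that an arbitrary autoequivalence of $\Der(\Her)$ transports this information along the correct Hom-spaces to rule out a non-constant shift. Once this is in hand, the remaining work reduces to a direct application of the classification of $\Aut(\Her)$ for a weighted projective line.
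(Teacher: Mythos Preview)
Your argument is correct; the obstacle you flag is already handled by your own Hom/Ext computation (for $q_1<q_2$ one has $\Hom_\Her(\Her^{(q_1)},\Her^{(q_2)})\neq 0$ while Serre duality gives $\Ext^1_\Her(\Her^{(q_1)},\Her^{(q_2)})=\dual\Hom_\Her(\Her^{(q_2)},\tau\Her^{(q_1)})=0$, so only shift difference $0$ survives).

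The paper takes a more packaged route. Rather than checking exactness position by position, it assembles a commutative diagram whose middle row is the exact sequence $1\to\Pic_0(\XX)\rtimes\Aut(\XX)\to\Aut(\Der(\Her))\to\Aut(\YY)\to 1$ from \cite[Thm.~6.3]{LeMe} (with $\YY$ parameterizing tubular families in $\Der(\Her)$), whose middle column is $1\to\spitz{F}\to\Aut(\Der(\Her))\to\Aut(\Clu)\to 1$ from Theorem~\ref{Thm:Aut1}, and whose right column $1\to\spitz{t}\to\Aut(\YY)\to\Aut(\WW)\to 1$ comes from \cite{LeMe}, with $F\mapsto t$ an isomorphism. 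The bottom row then drops out. Your uniformity-of-$n_q$ step is exactly the statement that $F$ generates the kernel of $\Aut(\YY)\to\Aut(\WW)$, phrased elementwise rather than group-theoretically. Your approach makes the slope geometry explicit; the paper's is shorter because the kernel identification has already been exported to \cite{LeMe}. Incidentally, your final step (from ``slope-preserving element of $\Aut(\Her)$'' to ``element of $\Pic_0(\XX)\rtimes\Aut(\XX)$'') is most cleanly justified by that same result \cite[Thm.~6.3]{LeMe} applied inside $\Aut(\Der(\Her))$, rather than via the decomposition $\Aut(\Her)=\Pic(\XX)\rtimes\Aut(\XX)$ together with a separate check that $\Aut(\XX)$ preserves slopes.
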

\begin{proof}
Indeed, we have the following commutative diagram
\begin{center}
\begin{picture}(310,128)
\put(0,15){
  \put(0,12){
  \put(0,-3){
    \multiput(180,95)(80,0){2}{\HBCenter{$1$}}
    \multiput(180,-25)(80,0){2}{\HBCenter{$1$}}
  \multiput(10,0)(0,35){2}{\HBCenter{$1$}}
  \multiput(80,0)(0,35){2}{\HBCenter{$\Pic_0(\XX)\rtimes\Aut(\XX)$}}
  \put(180,0){\HBCenter{$\Aut(\Clu)$}}
  \put(180,35){\HBCenter{$\Aut(\Der(\Her))$}}
  \put(255,35){\HBCenter{$\Aut(\YY)$}}
  \put(255,0){\HBCenter{$\Aut(\WW)$}}
  \put(180,70){\HBCenter{$\spitz{F}$}}
  \put(260,70){\HBCenter{$\spitz{t}$}}
  \multiput(300,0)(0,35){2}{\HBCenter{$1$}}
  }
  \multiput(15,0)(0,35){2}{\vector(1,0){22}}
  \put(123,0){\vector(1,0){38}}
  \put(123,35){\vector(1,0){27}}
  \put(199,0){\vector(1,0){36}}
  \put(210,35){\vector(1,0){26}}
  \put(275,0){\vector(1,0){20}}
  \put(274,35){\vector(1,0){21}}
  \multiput(79,7)(2,0){2}{\line(0,1){19}}
  \multiput(180,0)(80,0){2}{%
    \multiput(0,0)(0,35){2}{%
      \put(0,28){\vector(0,-1){21}}
      }
    \multiput(0,-7)(0,95){2}{\vector(0,-1){11}}
    }
  \put(190,70){\vector(1,0){61}}
  \put(220,71){\HBCenter{$\sim$}}
}
}
\end{picture}
\end{center}
where the vertical exact sequence on the right is taken from the proof
of \cite[Thm.~5.1]{LeMe}. By Theorem~\ref{Thm:Aut1} and
\cite[Thm.~6.3]{LeMe} the central vertical and horizontal sequences
are also exact, yielding the claim.
\end{proof}

\subsection*{Comparison of $\mathrm{Aut}(\mathcal{H})$ and $\mathrm{Aut}(\mathcal{C})$}

There is a natural homomorphism $j\colon\Aut(\Her)\ra \Aut(\Clu)$
sending an automorphism $u$ of $\Her$ to the automorphism $u_\Clu$ of
$\Clu$ induced by $\Der(u)$. By Theorem~\ref{Thm:Aut1} $j$ is
injective. In the following we identify $\Aut(\Her)$ with its image in
$\Aut(\Clu)$.

\begin{Theorem} \label{thm:AutC-AutH}
$\Aut(\Her)$ consists of those automorphisms $G$ of $\Clu$ with
$G(\Clu_0)=\Clu_0$. Moreover,

\textnormal{(i)} If $\chi_\Her\neq0$ we have $\Aut(\Her)=\Aut(\Clu)$.

\textnormal{(ii)} If $\Her$ is tubular, we have a canonical bijection
$ \nu\colon\Aut(\Clu)/\Aut(\Her)\xrightarrow{\sim}\WW, $ where
$G\cdot\Aut(\Her)$ is mapped to $w$ with $G(\Clu_0)=\Clu^{(w)}$. The
bijection is compatible with the natural left $\Aut(\Clu)$-actions on
both sets.
\end{Theorem}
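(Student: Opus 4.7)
The plan is to establish the characterization $\Aut(\Her)=\{G\in\Aut(\Clu):G(\Clu_0)=\Clu_0\}$ first, and then derive both (i) and (ii) as straightforward consequences, combining this characterization with Theorem~\ref{Thm:Aut1}, Corollary~\ref{cor:surjective}, and Lemma~\ref{lem:circle}.

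For the characterization, the inclusion $\Aut(\Her)\subseteq\{G\in\Aut(\Clu):G(\Clu_0)=\Clu_0\}$ is clear: for $g\in\Aut(\Her)$ the derived extension $\Der(g)$ preserves $\Her$, hence its intrinsic subcategory $\Her_0$ of finite-length objects, so $j(g)$ stabilises $\Clu_0$. For the converse, given $G\in\Aut(\Clu)$ with $G(\Clu_0)=\Clu_0$, I would lift $G$ to $u\in\Aut(\Der(\Her))$ via Corollary~\ref{cor:surjective}. Since $F=\tau^{-}\circ[1]$ and $\tau$ stabilises $\Her_0$, the hypothesis $u_\Clu(\Clu_0)=\Clu_0$ translates into $u(\Her_0)=\Her_0[n]$ for some $n\in\ZZ$. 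Replacing $u$ by $F^{-n}\circ u$ leaves $G=u_\Clu$ unchanged by Theorem~\ref{Thm:Aut1}, so I may assume $u(\Her_0)=\Her_0$; then \cite[Prop.~4.2 and 6.2]{LeMe} force $u(\Her)=\Her$, so $u=\Der(g)$ for some $g\in\Aut(\Her)$ and $G=j(g)\in\Aut(\Her)$.

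Part (i) is then immediate: if $\chi_\Her\neq0$ the subcategory $\Clu_0$ is the unique tubular family of $\Clu$, so any $G\in\Aut(\Clu)$ automatically satisfies $G(\Clu_0)=\Clu_0$, giving $\Aut(\Her)=\Aut(\Clu)$. For (ii), I define $\nu$ by sending the coset $G\cdot\Aut(\Her)$ to the unique $w\in\WW$ with $G(\Clu_0)=\Clu^{(w)}$. Well-definedness follows from the characterization, since every $h\in\Aut(\Her)$ stabilises $\Clu_0$ and hence $Gh(\Clu_0)=G(\Clu_0)$. Injectivity is the contrapositive of the characterization: if $G_1(\Clu_0)=G_2(\Clu_0)$ then $G_2^{-1}G_1$ fixes $\Clu_0$ setwise and therefore lies in $\Aut(\Her)$. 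Surjectivity follows from the transitive action of $\Aut(\Der(\Her))$ on tubular families of $\Clu$ already invoked in the proof of Theorem~\ref{thm:lifting} (equivalently, from the surjectivity of $\mu$ in Lemma~\ref{lem:circle}).

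Finally, equivariance under the left $\Aut(\Clu)$-actions is an immediate consequence of Lemma~\ref{lem:circle}: for $H\in\Aut(\Clu)$ and a coset with image $w$, one computes $HG(\Clu_0)=H(\Clu^{(w)})=\Clu^{(\mu(H)w)}$, so $\nu(H\cdot G\cdot\Aut(\Her))=\mu(H)\cdot\nu(G\cdot\Aut(\Her))$. The only non-formal step in this plan is the implication $u(\Her_0)=\Her_0\Rightarrow u(\Her)=\Her$, which is the main obstacle; everything else amounts to bookkeeping with the lift supplied by Theorem~\ref{thm:lifting} and the slope data encoded in $\WW$, while this one step relies essentially on the Lenzing--Meltzer classification of tubular families in the derived category.
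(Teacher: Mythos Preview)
Your proposal is correct and follows essentially the same approach as the paper: the characterization of $\Aut(\Her)$ inside $\Aut(\Clu)$ is precisely what is established in the proof of Theorem~\ref{thm:lifting}, and (ii) is then the orbit--stabiliser description of the transitive $\Aut(\Clu)$-action on $\WW$ coming from Lemma~\ref{lem:circle}. The only cosmetic difference is organisational: you first lift $G$ as a black box and then adjust by a power of $F$ inside $\Der(\Her)$ (recycling the argument of Theorem~\ref{Thm:Aut1} and \cite[Prop.~4.2 and 6.2]{LeMe}), whereas the paper reads the same conclusion directly off the construction of the lift in Theorem~\ref{thm:lifting}, where $G(\Clu_0)=\Clu_0$ forces $G(\Ii)=\Ii$ via Proposition~\ref{prop:ideal} and hence $G$ descends to an automorphism of $\Her=\Clu/\Ii$.
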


\begin{proof}
  The first assertion and (i) follow from
  Theorem~\ref{thm:lifting}. Concerning (ii) we note that
  $\Aut(\Clu)$ acts transitively on $\WW$ by means of
  Lemma~\ref{lem:circle}. The bijectivity follows.
\end{proof}

\begin{Remark}
  With a little extra work, concerning
  Proposition~\ref{Prop:exc_tube_iso} and using~\cite[Cor.~3.3]{Kus3}
  instead of~\cite{LeMe}, the results of our paper extend to the case
  of an arbitrary base field as long as canonical algebras in the
  sense of Ringel~\cite{Ri} or weighted projective lines are
  concerned. In the more general situation, dealing with canonical
  algebras in the sense of~\cite{Ri2}, equivalently with categories of
  coherent sheaves on exceptional curves~\cite{Le}, the uniqueness
  result Proposition~\ref{prop:aut_fix_obj} does not carry over
  by~\cite{Kus2}, affecting our proofs of Theorem~\ref{thm:lifting}
  and subsequent results (Theorem~\ref{Thm:Aut1},
  Theorem~\ref{thm:AutC-AutH}).
\end{Remark}


\section{The tilting graph}

In this section $\Her$ denotes the category of coherent sheaves on a
weighted projective line and $\Clu$ the corresponding cluster
category, equipped with an admissible triangulated structure. We
are proving in this section that the tilting (or exchange) graphs for
$\Her$ and $\Clu$ agree and, moreover, are connected if the Euler
characteristic is non-negative.

 We start with a result due to H\"{u}bner~\cite[Prop.~5.14]{Huebner}.
\begin{Proposition}
Let $T=E\oplus U$ be a basic tilting object in $\Her$ with $E$
indecomposable. Then there exists exactly one exceptional object $E^*$
in $\Her$ such that $E^*$ is not isomorphic to $E$ and $T'=E^*\oplus
U$ is also a tilting object.

Moreover, exactly one of the spaces $\Ext^1_\Her(E,E^*)$ and
$\Ext^1_\Her(E^*,E)$ is non-zero, and then one-dimensional over
$k$.~\qed
\end{Proposition}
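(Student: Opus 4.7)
The plan is to mimic Hübner's mutation argument in the perpendicular setting adapted to $\Her=\coh(\XX)$. Since $T=E\oplus U$ is tilting and basic, $U$ is an almost-complete tilting object, i.e.\ $\Ext_\Her^1(U,U)=0$ and $U$ has one fewer indecomposable summand than the rank of a tilting object. The strategy is: (a) parametrize the possible complements of $U$ by exceptional objects in a rank-one hereditary subcategory, (b) construct at least one $E^\ast\not\simeq E$ via approximation sequences, (c) show uniqueness and that exactly one of $\Ext_\Her^1(E,E^\ast)$, $\Ext_\Her^1(E^\ast,E)$ is nonzero of dimension one.

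First I would pass to the perpendicular category $\Ww=U^{\perp_0}\cap U^{\perp_1}$ studied by Geigle--Lenzing. This is an exact abelian extension-closed subcategory of $\Her$, hereditary, and whose Grothendieck group has rank $1$ (since a tilting object of $\Her$ has rank equal to $\rank\Groth(\Her)$ and $U$ accounts for all but one rank). The complements of $U$ to a tilting object of $\Her$ correspond bijectively to the exceptional objects of $\Ww$. Hence it suffices to show that $\Ww$ contains exactly two exceptional objects, $E$ and a distinguished second one $E^\ast$, and that the asserted $\Ext$-dichotomy holds.

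Next I would construct $E^\ast$ explicitly via approximations, in parallel to what is already done in the proof of Theorem~\ref{thm:tub-ca}. Take a minimal right $\add(U)$-approximation $f\colon B\ra E$ and a minimal left $\add(U)$-approximation $g\colon E\ra B'$. Using $\Ext^1_\Her(U,E)=0=\Ext^1_\Her(E,U)$ one shows that exactly one of the following holds: either $f$ is surjective with $\ker f$ indecomposable, or $g$ is injective with $\coker g$ indecomposable; in the respective cases set $E^\ast=\ker f$ or $E^\ast=\coker g$. Applying $\Hom_\Her(-,T)$ and $\Hom_\Her(T,-)$ to the resulting exchange sequence
\[
0\ra E^\ast\ra B\ra E\ra 0\qquad\text{or}\qquad 0\ra E\ra B'\ra E^\ast\ra 0,
\]
together with $\Ext^2=0$, yields $\Ext^1_\Her(T',T')=0$ where $T'=E^\ast\oplus U$, so $E^\ast\in\Ww$ is exceptional and $T'$ is tilting. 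Which of the two cases occurs then gives precisely which of $\Ext^1_\Her(E,E^\ast)$ and $\Ext^1_\Her(E^\ast,E)$ is nonzero.

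For uniqueness and one-dimensionality I would argue inside $\Ww$. Any other complement $E'\not\simeq E$ must lie in $\Ww$ and be exceptional; since the connecting map in, say, the first long exact sequence realizes the corresponding Ext class, and since $B$ is determined (up to isomorphism) by the minimality of the approximation, every non-split extension in $\Ext^1_\Her(E,E^\ast)$ has the same middle term $B$ and the same quotient $E$, forcing $\dim_k\Ext^1_\Her(E,E^\ast)=1$ and $E'\simeq E^\ast$. Finally, to exclude that both $\Ext^1_\Her(E,E^\ast)$ and $\Ext^1_\Her(E^\ast,E)$ are nonzero, I would combine the two exchange sequences: composing them would produce a nonzero nilpotent endomorphism of $B$ or $B'$, contradicting that $U$ has no self-extensions and that $E, E^\ast$ are indecomposable; this is the main technical obstacle, and the cleanest resolution is to invoke the rank-one structure of $\Ww$, in which any two exceptional objects $E,E^\ast$ satisfy $\Hom_\Ww(E,E^\ast)\cdot\Hom_\Ww(E^\ast,E)=0$, so by Serre duality in $\Her$ at most one of the two Ext groups is nonzero.
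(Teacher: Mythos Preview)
A structural remark first: the paper does not supply its own proof of this proposition---it is quoted from H\"ubner's thesis and closed with a \qed---so there is no in-paper argument to compare against. The relevant question is whether your proposal is internally sound.

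Your step~(a) contains a genuine error. A complement $E$ of $U$ satisfies $\Ext^1_\Her(U,E)=0=\Ext^1_\Her(E,U)$, but there is no reason for $\Hom_\Her(U,E)$ to vanish, so $E$ typically does \emph{not} lie in $\Ww=U^{\perp_0}\cap U^{\perp_1}$. For instance, take $T=\can$ and $E=\Oo(\vc)$: then $\Oo$ is a summand of $U$ and $\Hom_\Her(\Oo,\Oo(\vc))\neq 0$, hence $E\notin\Ww$. The asserted bijection between complements of $U$ and exceptional objects of $\Ww$ is therefore false (in fact $\Ww\simeq\md(k)$ has a \emph{single} exceptional object, not two). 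With this your final paragraph collapses as well: you cannot invoke the rank-one structure of $\Ww$ to separate the two $\Ext$-spaces or to force one-dimensionality, because $E$ and $E^\ast$ are not objects of $\Ww$ in the first place.

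Steps~(b) and~(c), via minimal $\add(U)$-approximations and exchange sequences, are the correct mechanism and are essentially how H\"ubner proceeds. Two points still need work once~(a) is discarded. First, $\coh(\XX)$ has neither projectives nor injectives, so the dichotomy ``$f$ is epi with indecomposable kernel, or $g$ is mono with indecomposable cokernel'' is not automatic from the approximation property alone; one clean route is to pass to $\Der(\Her)$, where the exchange \emph{triangles} always exist (cf.\ the proof of Theorem~\ref{thm:tub-ca}), and then argue that exactly one of them is induced by a short exact sequence in $\Her$. Second, uniqueness of $E^\ast$ and one-dimensionality of the non-vanishing $\Ext$ must be extracted directly from these sequences: for any putative further complement $E'$ one uses that a rigid object in $\Her$ has at most $\rank\Groth(\Her)$ indecomposable summands to force one of $\Ext^1_\Her(E,E')$, $\Ext^1_\Her(E',E)$ to be non-zero, and then applies $\Hom_\Her(E',-)$ and $\Hom_\Her(-,E')$ to the exchange sequence together with $\Ext^1_\Her(U,E')=0=\Ext^1_\Her(E',U)$ to identify $E'$ with $E^\ast$. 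This replaces your appeal to $\Ww$.
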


The object $T'$ is called \emph{mutation} of $T$ in $E$.  Moreover, if
a tilting object $T'$ can be obtained by a sequence of mutations from
a tilting object $T$, then we say that $T'$ and $T$ are
\emph{connected by mutations}. If $E$ is a source (resp.\ sink) in the
quiver of $\End (T)$ then the mutation is given by APR-tilt
(resp.\ APR-cotilt), see~\cite{APR}.

The \emph{tilting graph} $\graph{\Her}$ of $\Her$ has as vertices the
isomorphism classes of basic (or multiplicity-free) tilting objects
$T$ of $\Her$. Two vertices, represented by basic tilting objects $T$
and $T'$ are connected by an edge if and only if they differ by
exactly one indecomposable summand. By Proposition~\ref{prop:CandH}
this graph may be identified with the tilting (or exchange) graph
$\graph{\Clu}$ of the cluster category $\Clu=\Clu(\Her)$,
compare~\cite{5clu}. Note that this graph is hence independent of the
choice of the admissible triangulated structure.

\begin{Lemma} \label{lem:trivial}
Let $T$ be a cluster-tilting object in $\Clu$ and $\comp{T}$ its
connected component in $\graph{\Clu}$. If $\si$ is an automorphism of
$\Clu$ such that $\si(T)$ belongs to $\comp{T}$ then
$\si^{\pm1}(\comp{T})=\comp{T}$.
\end{Lemma}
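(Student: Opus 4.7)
The plan is to show that $\si$ induces a graph automorphism of the tilting graph $\graph{\Clu}$, from which the lemma follows by elementary combinatorics on graphs.

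First I would observe that $\si$ acts on the vertex set of $\graph{\Clu}$. Indeed, since $\si$ is an autoequivalence of $\Clu$, it preserves the intrinsic notions of indecomposability, basic direct sum decomposition, and, by the description used in the paper (combined with Proposition~\ref{prop:CandH}), the property of being a basic cluster-tilting object. Consequently, if $T$ is a basic cluster-tilting object, then so is $\si(T)$, and $\si^{-1}$ does the same in the opposite direction, so $\si$ induces a bijection of the vertex set of $\graph{\Clu}$.

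Next I would check that $\si$ preserves the edge relation. Two vertices are joined by an edge in $\graph{\Clu}$ precisely when the corresponding basic cluster-tilting objects share all but exactly one indecomposable summand, i.e., have the form $\oT\oplus M$ and $\oT\oplus M^*$ with $M\not\simeq M^*$. This condition only depends on isomorphism classes of indecomposable summands, and is therefore preserved by the autoequivalence $\si$ in both directions. Hence $\si$ acts as a genuine graph automorphism of $\graph{\Clu}$.

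Finally, a graph automorphism permutes the connected components of the underlying graph. Since $\si(T)\in\comp{T}$ by assumption, the component $\si(\comp{T})$ contains $\si(T)$ and therefore coincides with $\comp{T}$, giving $\si(\comp{T})=\comp{T}$. Applying the same reasoning to the autoequivalence $\si^{-1}$ and the cluster-tilting object $\si(T)$, using that $\si^{-1}(\si(T))=T\in\comp{T}=\comp{\si(T)}$, yields $\si^{-1}(\comp{T})=\comp{T}$.

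No step is genuinely difficult; the only thing that deserves explicit verification is that the edge relation in $\graph{\Clu}$ is purely categorical (depending only on the decomposition type into indecomposables), which is immediate from the definition together with Proposition~\ref{prop:CandH}.
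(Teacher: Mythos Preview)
Your proof is correct and follows essentially the same approach as the paper. The paper compresses the argument to a single sentence (``$\si(\comp{T})$ is connected having a non-empty intersection with $\comp{T}$''), which tacitly uses exactly what you spell out: that an autoequivalence of $\Clu$ induces a graph automorphism of $\graph{\Clu}$, so $\si(\comp{T})$ is a connected component meeting $\comp{T}$ and hence equal to it.
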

\begin{proof}
Note that $\si(\comp{T})$ is connected having a non-empty intersection
with $\comp{T}$.
\end{proof}
\newcommand{\can}{T_{can}}
\newcommand{\squid}{T_{sq}}
\newcommand{\epi}{\twoheadrightarrow}
\newcommand{\wing}{\mathcal{W}}
By a \emph{wing} in $\Her_0$ we understand the full subcategory
$\wing$ consisting of all objects having a finite filtration whose
factors belong to a \emph{proper $\tau$-segment} $S,\tau S,\tau^2
S,\ldots,\tau^{r-1}S$ of simple objects of an exceptional tube of
$\Her_0$. Proper means that not all simple objects of the tube belong
to the segment. By construction each wing $\wing$ with $r$ simples is
equivalent to the $k$-linear representations of the linear quiver
$1\ra2\ra3\ra\cdots\ra r$. Each tilting object $B$ in $\wing$ is called a
\emph{branch}. Such a branch has $r$ indecomposable (non-isomorphic)
direct summands and always contains a simple object and also the
\emph{root} $R$ of $\wing$, defined as the unique indecomposable of
maximal length $r$.

Assume $\Her$ has weight type $(p_1,\ldots,p_t)$. In the notation of
\cite{GeLe}, the line bundles $\Oo(\vx)$, with $0\leq\vx\leq\vc$,
form a tilting object $\can$ in $\Her$ whose endomorphism algebra is
the canonical algebra attached to $\Her$. We call $\can$ the
\emph{standard canonical configuration} and
$\Oo(\vx_i),\Oo(2\vx_i),\ldots,\Oo((p_i-1)\vx_i)$, $i=1,\ldots,t$, the
\emph{$i$-th arm} of $\can$.  Note that $\Her$ has exactly $t$
exceptional tubes consisting of sheaves of finite length. In the
$i$-th exceptional tube of rank $p_i$ there is exactly one simple
object $S_i$ satisfying $\Hom_\Her(\Oo,S_i)\neq0$. Moreover, in the
same tube there exists a sequence of exceptional objects and
epimorphisms
$$
B_i \colon S_i^{[p_i-1]}\epi S_i^{[p_i-2]}\epi \cdots \epi
S_i^{[1]}=S_i
$$
where $S_i^{[j]}$ has length $j$ and top $S_i$. The direct sum of
$\Oo$, $\Oo(\vc)$ and all the $S_i^{[j]}$, $i=1,\ldots,t$;
$j=1,\ldots,p_i-1$, forms another tilting object of $\Her$, called the
\emph{standard squid $\squid$}. Further $B_i$ is called the
\emph{$i$-th branch} of $\squid$.

We recall that the \emph{Picard group} $\Pic(\XX)$ of $\XX$ (or
$\Her$) is the subgroup of the automorphism group of $\Her$ consisting
of all shift functors $E\mapsto E(\vx)$, see~\cite{GeLe}.

\begin{Proposition}[\cite{Huebner}, Prop.~5.30]   \label{prop:Huebner}
The standard squid $\squid$ belongs to the connected  component
$\Delta=\comp{\can}$ of $\can$ in $\graph{\Her}$. Moreover $\Delta$ is
preserved under the operations of $\Pic(\XX)$, in particular under
Auslander-Reiten translation.
\end{Proposition}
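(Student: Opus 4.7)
The plan is to split the proof into two independent assertions: (a) exhibiting an explicit mutation sequence from $\can$ to $\squid$, and (b) using this together with Lemma~\ref{lem:trivial} to obtain $\Pic(\XX)$-invariance of $\Delta=\comp{\can}$.

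For assertion (a), I would proceed arm by arm. Fix $i\in\{1,\ldots,t\}$ and consider the $i$-th arm $\Oo(\vx_i),\Oo(2\vx_i),\ldots,\Oo((p_i-1)\vx_i)$ of $\can$. The key input is the family of short exact sequences
\begin{equation*}
0\to \Oo(k\vx_i)\to \Oo(\vc)\to S_i^{[p_i-k]}\to 0,\qquad k=1,\ldots,p_i-1,
\end{equation*}
which connect arm line bundles with the sheaves appearing in the $i$-th branch of $\squid$. In the quiver of $\End_\Her(\can)$ the vertex $\Oo((p_i-1)\vx_i)$ is a direct predecessor of the sink $\Oo(\vc)$; mutating at this vertex replaces $\Oo((p_i-1)\vx_i)$ by $S_i^{[1]}=S_i$, producing a new tilting object $T_1$ in $\Delta$ whose quiver restricted to arm $i$ has lost its last edge. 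Iterating this procedure (each time mutating the current "outermost" arm line bundle, which after the previous step becomes adjacent to a tube summand that has just been introduced), one successively replaces $\Oo((p_i-k)\vx_i)$ by $S_i^{[k]}$ and reaches, after $p_i-1$ mutations on the $i$-th arm, a tilting object whose $i$-th arm has been replaced by the branch $B_i$. Repeating for all arms yields $\squid$, proving $\squid\in\Delta$.

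For assertion (b), since $\Pic(\XX)$ is generated by the shifts $E\mapsto E(\vx_i)$ for $i=1,\ldots,t$, Lemma~\ref{lem:trivial} reduces the problem to showing $\sigma(\can)\in\Delta$ for each such generator $\sigma$. Because $\squid\in\Delta$, it suffices equivalently to show $\sigma(\squid)\in\Delta$. Now $\squid(\vx_i)$ and $\squid$ share the summand $\Oo(\vc)$ shifted appropriately, and differ only by the transport of $\Oo\mapsto\Oo(\vx_i)$, $\Oo(\vc)\mapsto\Oo(\vc+\vx_i)$, and the rotation $S_i^{[j]}\mapsto \tau^{-1}S_i^{[j]}$ (coming from the standard identification $(\vx_i)\cong \tau^{-1}$ on the $i$-th tube). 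One then connects $\squid$ and $\squid(\vx_i)$ by finitely many mutations: mutations inside the $i$-th branch realize the $\tau^{-1}$-rotation of the branch (the branch is a tilting object in the wing, so any two branches of the wing, in particular a branch and its $\tau^{-1}$-image, are connected by mutations), and mutations involving the structure sheaf part realize the passage from $\Oo$ to $\Oo(\vx_i)$ by going through the squid configurations of the form just described. Since the Auslander-Reiten translation equals the shift by $\vec{\omega}=\vc-\sum_i\vx_i$, which lies in $\Pic(\XX)$, $\tau$-invariance of $\Delta$ is a consequence of $\Pic(\XX)$-invariance.

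The main obstacle is the careful combinatorial bookkeeping in step (a): at each mutation one must confirm that the replacement object is in fact the claimed $S_i^{[k]}$ (which requires identifying the exchange sequence with the displayed short exact sequence above, at the appropriate intermediate step), and that the resulting basic object remains tilting. An equally delicate point occurs in (b), where one has to exhibit a concrete mutation path realizing the Picard shift; this is substantially easier once one knows that $\squid$ is reached, because all modifications are confined either to a single wing or to the line bundle sector, but still requires a detailed case analysis. These are precisely the technical points handled in \cite{Huebner}.
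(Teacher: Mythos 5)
Your part (a) is essentially the proof the paper gives: mutate successively at $\Oo((p_i-1)\vx_i),\ldots,\Oo(\vx_i)$, replacing them by $S_i, S_i^{[2]},\ldots,S_i^{[p_i-1]}$ via the exact sequences $0\to\Oo(k\vx_i)\to\Oo(\vc)\to S_i^{[p_i-k]}\to0$, and do this arm by arm; that part is fine.

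The gap is in part (b). Your key claim there — that the branch $B_i$ and its $\tau^{-1}$-image are two branches of the same wing and hence connected by mutations inside that wing — is false. The wing $\Ww_i$ is generated by the proper $\tau$-segment $S_i,\tau S_i,\ldots,\tau^{p_i-2}S_i$, whereas $\tau^{-1}S_i^{[p_i-1]}$ has $\tau^{-1}S_i=\tau^{p_i-1}S_i$ among its composition factors, i.e.\ exactly the simple of the tube that $\Ww_i$ omits; so the rotated branch does not lie in $\Ww_i$ and cannot be reached by wing-internal mutations. Worse, the intermediate configurations you implicitly use do not exist: by Serre duality $\Ext^1_\Her(\tau^{-1}S_i^{[p_i-1]},\Oo)\cong \dual\Hom_\Her(\Oo,S_i^{[p_i-1]})\neq0$, so $\Oo$ together with the rotated branch is not even rigid — one cannot first rotate the branch and only afterwards move the line bundles. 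The rotation must be interleaved with mutations of the line-bundle summands, and your sentence that ``mutations involving the structure sheaf part realize the passage from $\Oo$ to $\Oo(\vx_i)$'' does not exhibit them; deferring this to \cite{Huebner} is circular here, since the $\Pic(\XX)$-invariance is precisely what is being proved. The paper supplies exactly this missing piece: staying inside $\Ww_h$ it first replaces $B_h$ by the branch $S_h^{[p_h-1]},\tau S_h^{[p_h-2]},\ldots,\tau S_h$ (both are tilting objects of $\Ww_h$), and then performs four explicit mutations — $S_h^{[p_h-1]}$ by $\Oo(\vc-\vx_h)$, $\Oo$ by $\Oo(2\vc-\vx_h)$, $\Oo(\vc)$ by $\tau S_h^{[p_h-1]}$, and $\Oo(2\vc-\vx_h)$ by $\Oo(-\vx_h)$ — of which only the third is not an APR-(co)tilt and is checked by computing the cokernel of $\Oo(\vc)\xrightarrow{x_h^{p_h-1}}\Oo(2\vc-\vx_h)$. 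Some argument of this kind is needed to complete your step (b); the reduction via Lemma~\ref{lem:trivial} and the remark that $\tau$ is a Picard shift are fine (though note $\tau$ is the shift by $(t-2)\vc-\sum_i\vx_i$, not $\vc-\sum_i\vx_i$ in general).
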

\begin{proof}
  For the convenience of the reader we include a proof. To obtain
  a sequence of mutations transforming $\can$ into
  $\squid$ we consider the following part 
  $$\Oo\ra\Oo(\vx_h)\ra\Oo(2\vx_h)\ra\dots\ra\Oo((p_h-1)\vx_h)\ra\Oo(\vc)$$
  of $\can$.  Forming successively mutations in $\Oo
  ((p_h-1)\vx_h),\dots,\Oo(\vx_h)$ we replace first
  $\Oo((p_h-1)\vx_h)$ by $S_h$, then $\Oo((p_h-2)\vx_h)$ by
  $S_h^{[2]}$, and so on, and finally $\Oo(\vx_h)$ by $S_h^{[p_h-1]}$.
  Dealing with all the arms, we thus obtain $\squid$ from $\can$.

We next show that $\Delta$ is preserved under shift by $-\vx_h$, where
$h=1,\ldots,t$.  The branch $B_h$ of $\squid$ forms a tilting object
in the \emph{wing} $\Ww_h$ generated by the $\tau$-segment $S_h, \tau
S_h, \ldots, \tau^{p_i-2} S_h$. Since $\Ww_h$ is equivalent to the
category of $k$-linear representations of the quiver $1\ra 2\ra
\cdots\ra (p_h-1)$, the tilting graph of $\Ww_h$ is connected.
Without changing the component of $\squid$, we may thus replace the
linear branch $B_h$ by the branch $S_h^{[p_h-1]},\tau
S_h^{[p_h-2]},\ldots,\tau S_h$, while keeping the other indecomposable
summands of $\squid$.

Next we form a sequence of four mutations, replacing first
$S_h^{[p_h-1]}$ by $\Oo(\vc-\vx_h)$, next $\Oo$ by $\Oo(2\vc -\vx_h)$,
then $\Oo(\vc)$ by $\tau S_h^{[p_h-1]}$ and finally $\Oo(2\vc-\vx_h)$
by $\Oo(-\vx_h)$. By way of example we verify the second last step,
which is the only one where the mutation is not given by an APR-tilt
or APR-cotilt.  In the endomorphism ring of the tilting object $T$,
consisting of $\Oo(\vc-\vx_h),\,\Oo(\vc),\,\Oo(2\vc-\vx_h)$, $B_j$
($j\neq h$) and $B'_h =\{\tau S_h,\dots,\tau S^{[p_h -2]}\}$, there is
a unique irreducible map (up to scalars) starting in $\Oo(\vc)$,
namely $\Oo(\vc)\stackrel{x_h^{p_h-1}}\longrightarrow\Oo(2\vc-\vx_h)$.
Passage to the cokernel yields $\tau S_h^{p_h-1}$, the exceptional
object replacing $\Oo(\vc)$.

  To sum up, we have shown that $\squid(-\vx_h)$ lies in the component
  of $\squid$. By Lemma~\ref{lem:trivial} this shows that
  $\comp\squid$ is stable under the operations of $\Pic(\XX)$.
\end{proof}

For a tilting object $T$ we write $T=T_+\oplus T_0$ for the
  summands $T_+\in \Her_+$ and $T_0\in\Her_0$ and we call $T_0$ the
  \emph{torsion part} of $T$.

\begin{Proposition} \label{prop:reduction}
Let $T$ be a tilting object in $\Her$ with non-zero torsion part
$T_0$. Then there exist tilting objects $T'$ and $T''$ in the
component $\comp{T}$ where $T'$ lies in $\Her_+$ and where  the
torsion part of $T''$ is an exceptional simple object.
\end{Proposition}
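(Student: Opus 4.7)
The plan is to reduce the torsion part of $T$ progressively via mutations staying inside $\comp{T}$, until only a single simple torsion summand remains (yielding $T''$) and then until no torsion summand remains (yielding $T'$). The argument proceeds by induction on the number of indecomposable summands of $T_0$ and is modelled on the explicit four-step mutation sequence used in the proof of Proposition \ref{prop:Huebner} for $T = \squid$.

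First, I would standardize the torsion part tube by tube. In each exceptional tube where $T_0$ has summands, the corresponding summands form a tilting object (a ``branch'') of a wing $\wing \subseteq \Her_0$. Since each wing is equivalent to the representation category of a linear quiver of type $A_r$, whose tilting graph is known to be connected, I may replace the branch by any preferred tilting object of $\wing$ via wing-internal mutations. In particular I may assume the branch is the linear form $S, S^{[2]}, \ldots, S^{[r]}$ considered in Proposition \ref{prop:Huebner}, with $S$ a simple at the bottom of the $\tau$-segment supporting $\wing$.

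Second, I would mimic Huebner's argument in the proof of Proposition \ref{prop:Huebner} to extract one summand of $T_0$ into the vector-bundle part. A further wing-mutation rotates the linear branch to the form $S^{[r]}, \tau S^{[r-1]}, \ldots, \tau S$, and then a sequence of APR-tilts inside $T_+$ followed by one non-APR mutation — given, as in the cited proof, by a cokernel of multiplication by a suitable $x_h^{p_h-1}$ — replaces the root $S^{[r]}$ by a line bundle. The net effect is to reduce the number of indecomposable summands of $T_0$ by one while remaining in $\comp{T}$. Iterating this procedure and stopping one step before exhausting $T_0$ yields $T''$; one final application yields $T' \in \Her_+$.

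The main obstacle lies in this second step: one must ensure the non-APR mutation is available irrespective of the current shape of $T_+$. The resolution is to perform preparatory APR-tilts (and, if needed, exploit the $\Pic(\XX)$-stability of the tilting-graph component given by Proposition \ref{prop:Huebner}) to arrange that $T_+$ contains the specific line bundles $\Oo(\vc - \vx_h)$ and $\Oo(2\vc - \vx_h)$ required in Huebner's computation. Verifying that such a preparatory step can always be carried out, using the connectedness of the tilting structure of the vector-bundle subcategory, constitutes the bulk of the technical work.
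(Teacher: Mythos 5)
Your first step (decomposing $T_0$ into branches of pairwise orthogonal wings and normalizing each branch by wing-internal mutations) is exactly how the paper begins, citing \cite{LeMe3}. The gap is in your second step. The explicit mutation sequence in the proof of Proposition~\ref{prop:Huebner} that you want to imitate is tailored to the standard squid: it uses that the bundle part is precisely $\Oo\oplus\Oo(\vc)$, so that the exchange partner of the root can be computed by hand as a cokernel of $x_h^{p_h-1}$. For an arbitrary tilting object $T$ there is no reason the specific line bundles $\Oo(\vc-\vx_h)$, $\Oo(2\vc-\vx_h)$ (or any prescribed line bundles) occur in, or can be mutated into, $T_+$; your proposed remedy --- ``arrange by preparatory APR-tilts and connectedness of the tilting structure of the bundle part that these line bundles appear'' --- is not established anywhere and is essentially as strong as the connectedness of the tilting graph that this whole section is building towards (and the only connectedness statement for tilting bundles available, Proposition~\ref{prop:domestic}, requires $\chi_\Her>0$ and in any case does not let you reach a \emph{specific} configuration while keeping $T_0$ fixed). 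So the key reduction step is left unproved, and the route chosen to prove it would be circular.

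The paper avoids this entirely: after the wing-internal mutations one arranges that the root $R_j$ of $\wing_j$ is a \emph{sink} in the quiver of $\End(T)$ (possible because, by $\Hom(\Her_0,\Her_+)=0$ and the orthogonality of the wings, all maps out of $R_j$ inside $T$ stay in $B_j$). The mutation at $R_j$ is then an APR-cotilt with exchange sequence $0\ra R_j^*\ra\bigoplus_{i\in K}T_i\ra R_j\ra 0$, whose middle terms are summands of $T$; connectedness of $\End(T)$ forces at least one $T_i$ to have positive rank, and additivity of the rank on this exact sequence gives $\rank(R_j^*)>0$. Thus one torsion summand is traded for a bundle summand with no need to identify the new summand explicitly, and induction on the number of torsion summands finishes the proof (when a single torsion summand remains it is automatically an exceptional simple, giving $T''$, and one more step gives $T'$). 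If you replace your second step by this sink/APR-cotilt argument, your outline becomes the paper's proof.
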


\begin{proof}
  By \cite{LeMe3} the torsion part of $T$ has a decomposition
  $T_0=\bigoplus_{j\in J}B_j$ where each $B_j$ is a branch in a wing
  $\wing_j$ and, moreover, the wings $\wing_j$ are pairwise Hom- and
  Ext-orthogonal. Fixing an index $j$, we may assume by a sequence of
  mutations inside $\wing_j$ that the root $R_j$ of $\wing_j$ is a
  sink in the endomorphism ring of $T$. Mutation (APR-cotilt) at $R_j$
  then replaces $R_j$ by the kernel term $R_j^*$ of an exact sequence
  $0\ra R_j^*\ra \bigoplus_{i\in K} T_i \ra R_j\ra 0$, where each
  $T_i$ is an indecomposable summand of $T$ different from $R_j$. Due
  to connectedness of $\End(T)$, at least one $T_i$, $i\in K$, has
  positive rank. It follows that $R_j^*$ has positive rank. The claim
  now follows by induction.
\end{proof}

\begin{Proposition} \label{prop:domestic}
  Assume that $\chi_\Her>0$. For any two tilting bundles $T$ and $T'$
  in $\Her$ there exists a mutation sequence of tilting bundles
  linking $T$ and $T'$.
\end{Proposition}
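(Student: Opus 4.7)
The plan is to reduce to the case that one of the two tilting bundles is the standard canonical configuration $\can$: if every tilting bundle is linked to $\can$ by a bundle-preserving mutation sequence, then any two tilting bundles are linked via $\can$.

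\emph{Picard normalization.} First, I exploit the action of $\Pic(\XX)$ on $\graph{\Her}$. By Proposition~\ref{prop:Huebner} the component $\comp{\can}$ is $\Pic(\XX)$-stable, and $\Pic(\XX)$ sends tilting bundles to tilting bundles. Hence, after replacing $T$ by a suitable shift $T(\vx)$, one may assume that $T$ shares at least one indecomposable summand with $\can$, anchoring the induction.

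\emph{Induction.} I proceed by induction on a suitable complexity, for instance the number of indecomposable direct summands of $T$ not isomorphic to any summand of $\can$. At each step I locate an indecomposable bundle summand $E$ of $T$, not occurring in $\can$, such that the mutation $E^\ast$ of $T$ at $E$ is again a vector bundle and such that $(T/E)\oplus E^\ast$ is strictly closer to $\can$. Iterating reduces $T$ to $\can$ through tilting bundles alone. The selection is guided by the fact that in the domestic case $\Her_+$ has an AR-quiver of shape $\ZZ\Delta$ with $\Delta$ extended Dynkin, and tilting bundles correspond to complete slices therein; elementary slice moves translate to bundle-preserving mutations.

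\emph{Main obstacle and its resolution.} The crucial point, which is the main obstacle, is to guarantee at each step the existence of such a summand $E$ whose mutation stays within $\Her_+$. In general this may fail: the exchange sequence $0\to E\to B\to E^\ast\to 0$ can produce a torsion $E^\ast$ as a quotient of a bundle. The remedy is to choose $E$ so that the relevant exchange sequence is of the opposite form $0\to E^\ast\to B'\to E\to 0$ with $B'\in\add(\oT)$ a vector bundle; then $E^\ast$ is a subsheaf of $B'$ and, since subsheaves of bundles on $\XX$ are torsion-free, $E^\ast$ is itself a vector bundle. The existence of such a summand whenever $T\neq\can$, together with the fact that the resulting mutation strictly decreases complexity, follows from H\"ubner's explicit analysis in~\cite{Huebner} of tilting bundles in the domestic case; this combinatorial verification is where the bulk of the work lies.
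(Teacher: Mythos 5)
Your proposal leaves the decisive step unproved, and the induction as set up would not work.

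The paper's actual argument is quite different and, crucially, comes with a concrete termination mechanism. It does not aim at $\can$ at all: Step~1 shows that any tilting bundle can be mutated (within tilting bundles) into one whose endomorphism ring is hereditary, i.e.\ a complete slice in the AR-quiver $\ZZ\Delta$ of $\Her_+$; Step~2 connects any two slices by BGP-reflections. The engine of Step~1 is: pick the first summand $T_s$ (in a Hom-ordering) at which a relation of $\End(T)$ starts, arrange by APR-moves that it is a source, and APR-tilt there. The exchange sequence $0\to T_s^*\to\bigoplus T_q\to T_s\to 0$ shows $T_s^*$ is a subsheaf of a bundle, hence a bundle --- this is the same observation you make --- but the point you are missing is \emph{why the process stops}: H\"ubner's rank additivity gives $\rank(T_s^*)-\rank(T_s)=\sum_{s\cdots r}\rank(T_r)>0$, so the rank strictly increases at each such step, and for $\chi_\Her>0$ the ranks of indecomposable bundles are bounded. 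Your proposal contains no analogue of this.

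Concretely, two things in your write-up are gaps rather than proofs. First, the ``Picard normalization'': there is no argument that some shift $T(\vx)$ shares a summand with $\can$; $\Pic(\XX)$ acts transitively on line bundles, but you have not shown that $T$ contains a line bundle, nor would one common summand help your induction anyway. Second, and more seriously, your induction measure --- the number of indecomposable summands of $T$ not occurring in $\can$ --- has no reason to decrease under a mutation: replacing $E$ by $E^*$ generically replaces one non-$\can$ summand by another non-$\can$ summand, so the measure stays constant. You acknowledge that the existence of a suitable $E$ and the strict decrease ``follows from H\"ubner's explicit analysis'' and that ``this is where the bulk of the work lies''; but that is precisely the theorem to be proved, and deferring it to an unspecified combinatorial verification means the proof is not there. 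To repair the argument along the paper's lines you would need (i) a numerical invariant that provably changes monotonically under a well-chosen mutation (the paper uses total/rank data via rank additivity), (ii) a boundedness statement that forces termination (boundedness of ranks when $\chi_\Her>0$), and (iii) a canonical target reachable from everywhere (slices in $\ZZ\Delta$, connected by reflections), rather than $\can$ itself.
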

\begin{proof}
  Note that the Auslander-Reiten quiver $\Ga$ of $\Her_+$ has shape
  $\ZZ\Delta$ where $\Delta$ is extended Dynkin.  \emph{Step 1.} We
  show first that each tilting bundle admits a mutation sequence of
  tilting bundles $T=T^{(1)},T^{(2)},\ldots,T^{(r)}$, where
  $\End(T^{(r)})$ is hereditary, accordingly $T^{(r)}$ yields a slice
  in $\Ga$. If the quiver of $\End(T)$ has no relations, then
  $\End(T)$ is hereditary, and we are done. Otherwise we choose an
  ordering of the indecomposable direct summands $T_1,\ldots,T_n$ of
  $T$ such that (a) $\Hom(T_j,T_i)=0$ for $j>i$ and (b) the index $s$
  is minimal among $1,\ldots,n$ such that a relation of $\End(T)$
  starts in $T_s$. The full subquiver $T_1,\ldots,T_s$ will not
  contain any cycle since otherwise $\End(T)$ would be wild,
  contradicting $\chi_\Her>0$. Without changing the component
  $\comp{T}$, by invoking suitable APR-tilts in sources and
  APR-cotilts in sinks of $T_1,\ldots,T_{s-1}$, we may then assume
  that $T_s$ is a source in the quiver of $\End(T)$. Now mutation
  (APR-tilt) in $T_s$ yields a complement $T_s^*$, replacing $T_s$,
  given by an exact sequence $0\ra T_s^* \ra \bigoplus_{q\in J} T_q
  \ra T_s \ra 0$, where the $T_q$ are indecomposable summands of $T$
  not isomorphic to $T_s$.  Passing to ranks we obtain:
\begin{eqnarray*}
\rank(T_s) + \rank(T_s^*) &=& \sum_{s\ra q}\rank(T_q) \\
2\rank(T_s) &=& \sum_{s\ra q}\rank(T_q)-\sum_{s\cdots r}\rank(T_r),
\end{eqnarray*}
where the first formula is obvious and the second one, with summation
over a set of minimal relations starting at $s$ in the second sum,
expresses H\"{u}bner's rank additivity on tilting objects in $\Her$,
see \cite{Huebner2} or \cite{LeRe}. We thus obtain
$$
\rank(T_s^*)-\rank(T_s)=\sum_{s\cdots r}\rank(T_r)>0.
$$
Since for $\chi_\Her>0$ there exists an upper bound on the ranks of
indecomposable bundles, this procedure allows only a finite number of
repetitions, finally yielding a tilting bundle whose endomorphism ring
has no relations. This finishes the proof of the first step.

\emph{Step 2.} Let $T'$ and $T''$ be tilting bundles in $\Her$. By
step~1 there is are mutation sequences of tilting bundles transforming
$T'$ and $T''$ into slices $\bar{T'}$ and $\bar{T''}$, respectively.
It is well known (and easy to see) that any two slices of $\Ga$ are
connected by a mutation sequence in $\Ga$ consisting of
BGP-reflections.  This proves the claim.
\end{proof}

\begin{Proposition} \label{prop:torsion}
Assume that $\chi_\Her\geq0$. Then each tilting object $T$ with a
non-zero torsion part belongs to the connected component $\comp\can$.
\end{Proposition}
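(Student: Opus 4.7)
The plan is to first reduce the problem to the case where $T$ has a single exceptional simple as torsion part (using Proposition~\ref{prop:reduction}), then to exhibit a reference tilting object in $\comp{\can}$ with the same torsion part, and finally to connect the two bundle parts by exploiting the fact that the right perpendicular category of an exceptional simple is always one ``notch'' closer to being hereditary, hence domestic in the present setting. Concretely, Step~1 applies Proposition~\ref{prop:reduction} to $T$ to yield a tilting object $T=U\oplus S\in\comp{T}$ with $U\in\Her_+$ a tilting bundle and $S$ an exceptional simple sheaf. Such $S$ sits in some exceptional tube of rank $p_i$, and we write $S=\tau^a S_i$ for a unique $a\in\{0,\ldots,p_i-1\}$.

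For Step~2 I would start from $\can\in\comp{\can}$ and carry out just the first mutation in the sequence used in the proof of Proposition~\ref{prop:Huebner}: exchange $\Oo((p_i-1)\vx_i)$ for $S_i$, obtaining a tilting object $\can'=U'\oplus S_i\in\comp{\can}$ whose bundle part $U'$ consists of the remaining line bundles of $\can$. Since $\comp{\can}$ is preserved by $\Pic(\XX)$, hence by $\tau$ (Proposition~\ref{prop:Huebner}), the tilting object
\[
T_0:=\tau^a\can'=U_0\oplus S,\qquad U_0:=\tau^a U',
\]
lies in $\comp{\can}$ and has torsion part exactly $\{S\}$.

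Step~3 then compares $U$ and $U_0$ inside the right perpendicular category $S^\perp\subset\Her$. This subcategory is equivalent to $\coh(\XX_S)$, where $\XX_S$ is obtained from $\XX$ by lowering the $i$-th weight from $p_i$ to $p_i-1$. A direct computation gives
\[
\chi_{\XX_S}=\chi_\Her+\tfrac{1}{p_i-1}-\tfrac{1}{p_i}>0,
\]
so $\XX_S$ is domestic. Applying Proposition~\ref{prop:domestic} inside $\coh(\XX_S)\simeq S^\perp$ produces a finite mutation sequence of tilting bundles from $U$ to $U_0$ in $S^\perp$. Each such mutation of tilting bundles in $S^\perp$ lifts to a mutation in $\graph{\Her}$ upon adding $S$ as a fixed summand (since $S$ is Hom- and Ext-orthogonal to everything in $S^\perp$). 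Concatenating the lifted mutations gives a path from $T$ to $T_0$ in $\graph{\Her}$, which yields $T\in\comp{T_0}=\comp{\can}$.

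The main obstacle is the conceptual leap in Step~3: realizing that removing one exceptional simple strictly increases the Euler characteristic, so that $\chi_\Her\geq 0$ forces the perpendicular category $S^\perp$ to be honestly domestic; this is what makes Proposition~\ref{prop:domestic} available where it is needed. The secondary technical point is the careful construction in Step~2 of a reference tilting object in $\comp{\can}$ whose torsion part is exactly the prescribed simple $S$; this depends essentially on the $\Pic(\XX)$-invariance of $\comp{\can}$ from Proposition~\ref{prop:Huebner}, which provides the $\tau^a$-shift needed to move the ``standard'' simple $S_i$ into the required position.
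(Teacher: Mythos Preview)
Your proposal is correct and follows essentially the same strategy as the paper's proof: reduce to a single exceptional simple torsion part via Proposition~\ref{prop:reduction}, produce a reference tilting object in $\comp{\can}$ with the same torsion part, and connect the two bundle parts by applying Proposition~\ref{prop:domestic} inside the domestic perpendicular category $S^\perp$. The only cosmetic difference is in Step~2: the paper normalizes $S$ to the standard simple $S_i$ (i.e., assumes $\Hom(\Oo,S)=k$) by acting with $\Pic(\XX)$ on $T$, whereas you keep $S$ arbitrary and instead shift the reference object $\can'$ by $\tau^a$; both variants rest on the $\Pic(\XX)$-invariance of $\comp{\can}$ from Proposition~\ref{prop:Huebner} and are interchangeable.
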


\begin{proof}
  Since the torsion part $T_0$ of $T$ is non-zero we may assume by
  Proposition~\ref{prop:reduction} that $T_0$ is an exceptional simple
  object $S$. Invoking Proposition~\ref{prop:Huebner} we may further
  assume that $\Hom(\Oo,S)=k$. Hence $T=T_+\oplus S$ where $T_+$ is a
  bundle.  Mutation of $\can$ at $R=\Oo((p_i-1)\vx_i)$ replaces $R$ by
  $S$ and yields a tilting object $T'=\can'\oplus S$ in $\comp{\can}$
  with the same torsion part as $T$. By~\cite{GeLe2} the right
  perpendicular category $\Her'=S^\perp$, formed in $\Her$, is
  naturally equivalent to a category of coherent sheaves of weight
  type $(p'_1,\ldots,p'_t)$ where $p'_j=p_j$ for $j\neq i$ and
  $p'_{i}=p_i-1$ and such that $\can'$ equals the canonical
  configuration in $\Her'$. Since $\chi_\Her\geq0$ the Euler
  characteristic of $\Her'$ is strictly positive, hence
  Proposition~\ref{prop:domestic} yields a mutation sequence
  $T_+=T^{(1)},T^{(2)},\ldots,T^{(s)}=\can'$ of tilting bundles in
  $\Her'$ connecting $T_+$ and $\can'$. It follows that
  $T=T^{(1)}\oplus S,\,T^{(2)}\oplus S,\ldots,\,T^{(s)}\oplus S=T'$ is
  a mutation sequence connecting $T$ and $T'$. We conclude that $T$
  belongs to $\comp{\can}$.
\end{proof}

Note that the automorphism group $\Aut(\Clu)$, where
$\Clu=\Clu(\Her)$, naturally acts on the tilting graph
$\graph{\Clu}$.
\begin{Proposition} \label{prop:aut_component}
The connected component $\comp{\can}$ is preserved under the action of
$\Aut(\Clu)$.
\end{Proposition}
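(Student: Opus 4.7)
The approach is to invoke Lemma~\ref{lem:trivial}, reducing the task to exhibiting, for each $\sigma\in\Aut(\Clu)$, a tilting object $T\in\comp{\can}$ such that $\sigma(T)\in\comp{\can}$ as well. I begin with the subgroup $\Aut(\Her)\subseteq\Aut(\Clu)$. Using the decomposition $\Aut(\Her)=\Pic(\XX)\rtimes\Aut(\XX)$, the Picard factor preserves $\comp{\can}$ directly by Proposition~\ref{prop:Huebner}, while any $g\in\Aut(\XX)$ fixes the structure sheaf $\Oo$ and permutes the weighted points, hence permutes the summands $\Oo(\vx)$ with $0\leq\vx\leq\vc$ of $\can$ among themselves, leaving $\can$ invariant as an isomorphism class. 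Together these show that $\Aut(\Her)$ preserves $\comp{\can}$; in view of Theorem~\ref{thm:AutC-AutH}(i) this already settles the cases $\chi_\Her\neq 0$.

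For the tubular case $\chi_\Her=0$, there exist $\sigma\in\Aut(\Clu)\setminus\Aut(\Her)$, and by Theorem~\ref{thm:AutC-AutH}(ii) any such $\sigma$ satisfies $\sigma_\WW(\infty)\neq\infty$. Setting $q:=\sigma_\WW^{-1}(\infty)\in\QQ$, my plan is to exhibit $T\in\comp{\can}$ having a non-zero indecomposable summand in the tubular family $\Clu^{(q)}$. Then $\sigma(T)$ acquires a non-zero summand in $\sigma(\Clu^{(q)})=\Clu^{(\infty)}=\Clu_0$, i.e., a non-zero torsion part, so that Proposition~\ref{prop:torsion} (applicable since $\chi_\Her\geq 0$ in the tubular case) yields $\sigma(T)\in\comp{\can}$, closing the argument via Lemma~\ref{lem:trivial}.

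The main obstacle is precisely the existence of such $T$ for an arbitrary $q\in\QQ$. A first attempt is to combine $\Pic(\XX)$-translates of $\squid$, which remain in $\comp{\can}$ by Proposition~\ref{prop:Huebner}, with sequences of APR-(co)tilts and branch-mutations that shift indecomposable summands across tubular families, in the spirit of the mutation sequences used in the proof of Proposition~\ref{prop:Huebner}. Should a direct construction prove awkward, a fall-back is to lift $\sigma$ to an exact autoequivalence $u$ of $\Der(\Her)$ via Theorem~\ref{thm:lifting} and exploit the explicit $\SL_2(\ZZ)$-description of $\Aut(\Der(\Her))$ in the tubular case from~\cite{LeMe}, reducing the preservation of $\comp{\can}$ to a small set of generators modulo $\Aut(\Her)$ on which the preservation can be verified directly.
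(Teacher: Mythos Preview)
Your handling of the case $\chi_\Her\neq 0$ is correct and, if anything, more explicit than the paper's one-line appeal to Proposition~\ref{prop:Huebner} and Lemma~\ref{lem:trivial}: you spell out why $\Aut(\XX)$ fixes $\can$ up to isomorphism.

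In the tubular case your primary plan has a real gap. You want, for each $q\in\QQ$, a tilting object $T\in\comp{\can}$ with an indecomposable summand of slope exactly~$q$, but you do not construct one, and there is no evident direct route. Picard shifts---the only automorphisms so far known to preserve $\comp{\can}$---translate slopes by integers, so shuffling $\can$ or $\squid$ around by $\Pic(\XX)$ only reaches integral slopes. Reaching an arbitrary rational $q$ would require producing specific higher-rank summands of slope $q$ inside $\comp{\can}$ by explicit mutation sequences, which is essentially the connectedness statement you are heading towards. Applying an automorphism of $\Der(\Her)$ to $\squid$ in order to land in $\Clu^{(q)}$ is circular: that automorphism is precisely one whose preservation of $\comp{\can}$ is in question.

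Your fall-back is in fact what the paper does, and it is the efficient path: reduce to generators. One takes $\sigma,\rho\in\Aut(\Der(\Her))$ acting on slopes by $x\mapsto x+1$ and $x\mapsto x/(1+x)$; together their images generate a subgroup of $\Aut(\Clu)$ surjecting onto $\Aut(\WW)$. Since $\sigma$ is a Picard shift, $\bar\sigma$ preserves $\comp{\can}$ by Proposition~\ref{prop:Huebner}. The single new computation is that for $\phi=\rho\sigma^{1-p}$ (with $p=\lcm(p_i)$) the image $\phi(\can)$ has maximal slope $\infty$, hence non-zero torsion; Proposition~\ref{prop:torsion} then puts $\phi(\can)$ in $\comp{\can}$, and Lemma~\ref{lem:trivial} gives $\bar\phi(\comp{\can})=\comp{\can}$. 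From $\bar\sigma$ and $\bar\phi$ one recovers $\bar\rho$, so $\langle\bar\sigma,\bar\rho\rangle$ preserves $\comp{\can}$; combined with Proposition~\ref{prop:tub_aut}, whose kernel $\Pic_0(\XX)\rtimes\Aut(\XX)$ you already handled, this covers all of $\Aut(\Clu)$. The missing idea in your proposal is precisely this one concrete verification for a single well-chosen generator outside $\Aut(\Her)$.
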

\begin{proof}
  For $\chi_\Her\neq0$ this is immediate from
  Proposition~\ref{prop:Huebner} and Lemma~\ref{lem:trivial}. For
  $\chi_\Her=0$ we need an additional argument.  Let $\si$ and $\rho$
  be automorphisms of $\Der(\Her)$ such that $\si$ and $\rho$ act on
  slopes by $x\mapsto x+1$ and $x\mapsto x/(1+x)$, respectively; see
  \cite{LeMe2} or \cite{LeMe}. We put $\phi=\rho\si^{1-p}$, where $p$
  is the least common multiple of the weight sequence. Then $\phi$
  sends $\can$ to a tilting object $T'=\phi(\can)$ having maximal
  slope $\infty$, hence a non-zero torsion part. By
  Proposition~\ref{prop:torsion} the tilting object $T'$ belongs to
  the component $\can$.  Hence $\comp{\can}$ is stable under the
  automorphism $\bar\phi$ of $\Clu$ induced by $\phi$ and, invoking
  Proposition~\ref{prop:Huebner}, also under the action of the
  subgroup $\comp{\bar\sigma,\bar\rho}$, which acts transitively on
  the rational circle $\WW$. The claim now follows from
  Proposition~\ref{prop:tub_aut}.
\end{proof}

\begin{Theorem}
Assume $\Her$ has Euler characteristic $\chi_\Her\geq0$. Then the
tilting graph $\graph{\Clu(\Her)}$ is connected.
\end{Theorem}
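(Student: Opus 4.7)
The plan is to show that every basic tilting object $T$ in $\Her$ lies in the connected component $\comp{\can}$ of the standard canonical configuration inside $\graph{\Her}$; since $\graph{\Her}=\graph{\Clu}$ by Proposition~\ref{prop:CandH}, this yields connectedness of $\graph{\Clu}$. Write $T=T_+\oplus T_0$ and split into cases according to whether the torsion part $T_0$ vanishes.

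The first case, $T_0\neq 0$, is handled directly by Proposition~\ref{prop:torsion}, giving $T\in\comp{\can}$. In the bundle case $T=T_+$ with $\chi_\Her>0$, Proposition~\ref{prop:domestic} provides a mutation sequence of tilting bundles linking $T$ to $\can$ (which is itself a tilting bundle, being the direct sum of line bundles $\Oo(\vx)$ with $0\leq\vx\leq\vc$), so again $T\in\comp{\can}$.

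The remaining case, where $\Her$ is tubular and $T$ is a tilting bundle, is the main obstacle: here Proposition~\ref{prop:domestic} is not available, and $T$ has no torsion summand to which Proposition~\ref{prop:torsion} could be applied. The idea is to move $T$ by an automorphism of $\Clu$ into a position with non-zero torsion part and then appeal to the stability of $\comp{\can}$ under $\Aut(\Clu)$ established in Proposition~\ref{prop:aut_component}. Concretely, in the tubular situation $\Aut(\Clu)$ surjects onto $\Aut(\WW)\cong\PSL_2(\ZZ)$ by Lemma~\ref{lem:circle}, and this action is transitive on the rational circle $\WW\cong\oQQ$. Fixing any slope $q\in\QQ$ that occurs among the indecomposable summands of $T$ (such $q$ exists since $T_+\neq 0$), we choose a lift $\phi\in\Aut(\Clu)$ of an element of $\Aut(\WW)$ sending $q$ to $\infty$. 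Then some indecomposable summand of $\phi(T)$ has slope $\infty$, i.e.\ lies in $\Clu_0$, so $\phi(T)$ has non-zero torsion part.

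Applying Case~1 (Proposition~\ref{prop:torsion}) to $\phi(T)$ yields $\phi(T)\in\comp{\can}$, and Proposition~\ref{prop:aut_component} then gives $T=\phi^{-1}(\phi(T))\in\comp{\can}$. This completes the case analysis and shows that $\comp{\can}=\graph{\Her}=\graph{\Clu}$, proving the claim. The genuine work has been carried out in the preceding propositions; the theorem itself is essentially the assembly step, with the transitivity of the $\Aut(\Clu)$-action on $\WW$ in the tubular case being the only substantive input beyond Propositions~\ref{prop:domestic}, \ref{prop:torsion} and \ref{prop:aut_component}.
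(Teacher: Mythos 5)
Your proposal is correct and follows essentially the same route as the paper: reduce to the torsion case via Propositions~\ref{prop:torsion} and \ref{prop:domestic}, and in the tubular bundle case move $T$ by an automorphism (the paper takes one of $\Der(\Her)$ acting on slopes, you take a lift from $\Aut(\WW)$ via Lemma~\ref{lem:circle}, which is the same thing through $\pi_*$) so that it acquires a torsion summand, then invoke the stability of $\comp{\can}$ from Proposition~\ref{prop:aut_component}. The only cosmetic difference is that you split on whether $T_0=0$ first, while the paper splits on the sign of $\chi_\Her$ and uses Proposition~\ref{prop:reduction} explicitly in the domestic case; the content is identical.
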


\begin{proof} Let $T$ be a tilting object in $\Her$. We show that $T$
  belongs to the connected component $\comp{\can}$.

\emph{Case $\chi_\Her>0$.} By Proposition~\ref{prop:reduction} we may
assume that $T$ is a tilting bundle. Then
Proposition~\ref{prop:domestic} proves that $T$ belongs to
$\comp{\can}$, proving the claim in this case.

\emph{Case $\chi_\Her=0$.} Let $T$ be a tilting object in $\Her$. By
an automorphism $\phi$ in $\Der(\Her)$ we achieve that $T'=\phi(T)$
has maximal slope $\infty$, hence by Proposition~\ref{prop:torsion}
belongs to $\comp{\can}$. Since $\comp{\can}$ is closed under the
action of the automorphism group of $\Clu$, it follows that $T$
belongs to $\comp{\can}$.
\end{proof}

\begin{Remark}
For $\chi_\Her>0$ connectedness of $\graph{\Her}$ is known for a long
time, compare \cite{Happel:Unger}
and \cite{5clu}.
For the tubular weight type $(2,2,2,2)$ connectedness of
$\graph{\Her}$ has been shown by Barot and Gei{\ss} using
combinatorial techniques (unpublished). It is conjectured that the
tilting graph is also connected for $\chi_\Her<0$.
\end{Remark}


\end{document}